\documentclass{amsart}
\usepackage{amsmath} 
\usepackage{amsfonts} 
\usepackage{amssymb} 
\usepackage[utf8]{inputenc} 
\usepackage[british]{babel} 
\usepackage{hyperref} 
\usepackage[protrusion=true,expansion=false]{microtype} 
\usepackage{enumitem} 
\usepackage[T1]{fontenc}
\usepackage{amsthm} 
\usepackage{dsfont}
\usepackage{lmodern}
\usepackage{xcolor}
\usepackage{mathrsfs}


\definecolor{dblue}{rgb}{0,0,0.70}
\definecolor{dgreen}{rgb}{0,0.60,0}
\hypersetup{
	unicode=true,
	colorlinks=true,
	citecolor=dgreen,
	linkcolor=dblue,
	anchorcolor=dblue
}



\newcommand{\1}{\ensuremath{\mathds{1}}}
\newcommand{\comp}{\ensuremath{\mathrel{\|}}}
\newcommand{\forces}{\ensuremath{\mathrel{\Vdash}}}


\newcommand{\dda}{\ensuremath{\dot{a}}}
\newcommand{\ddx}{\ensuremath{\dot{x}}}
\newcommand{\ddy}{\ensuremath{\dot{y}}}
\newcommand{\ddA}{\ensuremath{\dot{A}}}


\newcommand{\PGF}{\ensuremath{\left\langle\mathbb{P},\mathscr{G},\mathscr{F}\right\rangle}}


\newcommand{\abs}[1]{\ensuremath{\left|#1\right|}}
\newcommand{\cabs}[1]{\ensuremath{\left\lceil#1\right\rceil}}
\newcommand{\parenth}[1]{\ensuremath{\left(#1\right)}}
\newcommand{\tup}[1]{\ensuremath{\langle#1\rangle}}


\newcommand*{\defeq}{\mathrel{\vcenter{\baselineskip0.5ex \lineskiplimit0pt
                     \hbox{\scriptsize.}\hbox{\scriptsize.}}}%
                     =}
\newcommand{\lomega}{\ensuremath{{{<}\omega}}}
\newcommand{\midcup}{\ensuremath{{\textstyle\bigcup}}}
\newcommand{\res}{\ensuremath{\nobreak\mskip2mu\mathpunct{}\nonscript
  \mkern-\thinmuskip{\upharpoonright}\mskip6muplus1mu\relax}} 
\newcommand{\vphi}{\ensuremath{\varphi}}
\newcommand{\Inj}[2]{\ensuremath{\operatorname{Inj}(#1,#2)}}


\renewcommand{\leq}{\leqslant}
\renewcommand{\nleq}{\nleqslant}
\renewcommand{\geq}{\geqslant}



\DeclareMathOperator{\Add}{Add}
\DeclareMathOperator{\Aut}{Aut}
\DeclareMathOperator{\Card}{Card}
\DeclareMathOperator{\cf}{cf}
\DeclareMathOperator{\dom}{dom}
\DeclareMathOperator{\fix}{fix}
\DeclareMathOperator{\id}{id}
\DeclareMathOperator{\Ord}{Ord}
\DeclareMathOperator{\Spec}{Spec}
\DeclareMathOperator{\Suc}{SC}
\DeclareMathOperator{\sym}{sym}
\DeclareMathOperator{\first}{1^{\text{\textup{st}}}}


\newcommand{\AC}{\ensuremath{\mathsf{AC}}}
\newcommand{\DC}{\ensuremath{\mathsf{DC}}}
\newcommand{\HS}{\ensuremath{\mathsf{HS}}}
\newcommand{\SVC}{\ensuremath{\mathsf{SVC}}}
\newcommand{\WO}{\ensuremath{\mathsf{WO}}}
\newcommand{\ZF}{\ensuremath{\mathsf{ZF}}}
\newcommand{\ZFC}{\ensuremath{\mathsf{ZFC}}}


\newcommand{\bbP}{\ensuremath{\mathbb{P}}}


\newcommand{\sF}{\ensuremath{\mathscr{F}}}
\newcommand{\sG}{\ensuremath{\mathscr{G}}}
\newcommand{\power}{\ensuremath{\mathscr{P}}}


\newcommand{\frakC}{\ensuremath{\mathfrak{C}}}
\newcommand{\frakD}{\ensuremath{\mathfrak{D}}}
\newcommand{\frakU}{\ensuremath{\mathfrak{U}}}



\newtheoremstyle{boldrk}
  {.5\baselineskip plus .2\baselineskip minus .2\baselineskip}%
  {.5\baselineskip plus .2\baselineskip minus .2\baselineskip}%
  {}{}{\bfseries}{.}%
  {5pt plus 1pt minus 1pt}{}%


\theoremstyle{plain}
\newtheorem{thm}{Theorem}[section]
\newtheorem*{mainthm}{Main Theorem}
\newtheorem{lem}[thm]{Lemma}
\newtheorem{prop}[thm]{Proposition}
\newtheorem{cor}[thm]{Corollary}
\newtheorem{fact}[thm]{Fact}

\theoremstyle{boldrk}
\newtheorem*{rk}{Remark}
\newtheorem*{eg}{Example}
\newtheorem{qn}[thm]{Question}

\theoremstyle{definition}
\newtheorem{defn}[thm]{Definition}


\title[The {H}artogs--{L}indenbaum Spectrum]{The {H}artogs--{L}indenbaum Spectrum\linebreak{}of Symmetric Extensions}
\author{Calliope Ryan-Smith}

\email{c.Ryan-Smith@leeds.ac.uk}
\urladdr{https://academic.calliope.mx}

\address{School of Mathematics, University of Leeds, LS2 9JT, UK}

\date{15th August 2024}
\keywords{Symmetric extensions, small violations of choice, axiom of choice, Hartogs number, Lindenbaum number.}
\thanks{The author's work was financially supported by EPSRC via the Mathematical Sciences Doctoral Training Partnership, grant number EP/W523860/1. For the purposes of open access, the author has applied a Creative Commons Attribution (CC BY) licence to any Author Accepted Manuscript version arising from this submission. No data are associated with this article.}
\subjclass[2020]{Primary: 03E25; Secondary: 03E65}

\begin{document}

\begin{abstract}
We expand the classic result that $\AC_\WO$ is equivalent to the statement ``For all $X$, $\aleph(X)=\aleph^*(X)$'' by proving the equivalence of many more related statements. Then, we introduce the Hartogs--Lindenbaum spectrum of a model of $\ZF$, and inspect the structure of these spectra in models that are obtained by a symmetric extension of a model of $\ZFC$. We prove that all such spectra fall into a very rigid pattern.
\end{abstract}

\maketitle

\section{Introduction}

Perhaps one of the most powerful consequences of the axiom of choice is a straightforward classification of size for all sets: Given any set $X$, there is a minimum ordinal $\alpha$ such that $X$ has the same cardinality as $\alpha$. Indeed, grappling with how to compare sizes of infinite objects (or appropriate abstractions of this notion) is an inalienable aspect of modern mathematical foundations. Even without the axiom of choice, it is still understood that if $Y$ is a superset of $X$, then $Y$ is still `at least as big as $X$' in some way. Taking this further, we are able to compare the cardinalities of sets through functions that map between them. If there is a surjection from $X$ onto $\omega$, then we can still describe $X$ as being `at least as big as $\omega$'. Even when we lose the straightforward description of cardinality classes that is obtained from the axiom of choice, we can still consider comparisons between sets and ordinals in this way.
\begin{defn}
Let $X$ be a set. The \emph{Hartogs number} of $X$ is
\begin{equation*}
\aleph(X)\defeq\min\{\alpha\in\Ord\mid\text{ There is no injection }f\colon\alpha\to X\}.
\end{equation*}
The \emph{Lindenbaum number} of $X$ is
\begin{equation*}
\aleph^*(X)\defeq\min\{\alpha\in\Ord\backslash\{0\}\mid\text{ There is no surjection }f\colon X\to\alpha\}.
\end{equation*}
\end{defn}
\noindent If $X$ is well-orderable, then $\aleph(X)=\aleph^*(X)=\abs{X}^+$, and so in the case that $\AC$ holds, these descriptions of sets tell us no more than cardinality already did. However, if the axiom of choice does not hold and $X$ is not well-orderable then such descriptions still provide insight into the cardinality of $X$. Indeed the existence of $\aleph(X)$ and $\aleph^*(X)$ is a theorem of $\ZF$, in the former case by Hartogs's lemma and in the latter by a lemma of Lindenbaum's theorem.\footnote{Hartogs's lemma is proved in \cite{hartogs_uber_1915}. Lindenbaum's theorem is stated in \cite[Th\'eor\`eme~82.$A_6$]{lindenbaum_communication_1926} with the first published proof in \cite{sierpinski_sur-lindenbaum_1947}.} It is easy to see, though important to note, that they must both be cardinal numbers and that for all $X$, $\aleph(X)\leq\aleph^*(X)$.

As observed, if we assume $\AC$ then $\aleph(X)=\aleph^*(X)$ for all sets $X$, but this principle is in general weaker than the axiom of choice. In fact, $(\forall X)\aleph(X)=\aleph^*(X)$ is equivalent to the axiom of choice for well-ordered families of non-empty sets. This equivalence, a theorem of Pincus, uses a powerful construction that takes sets $X$ with $\aleph(X)\neq\aleph^*(X)$ and `transfers' this property to sets of larger Hartogs or Lindenbaum number. We refer to sets $X$ such that $\aleph(X)\neq\aleph^\ast(X)$ as \emph{eccentric}. In this paper, we shall fine-tune this construction to produce many more equivalent statements.

\begin{thm}\label{thm:acwo-equiv-pelc-intro}
The following are equivalent:
\begin{enumerate}[label=\textup{(\arabic*)}]
\item For all $X$, $\aleph(X)=\aleph^*(X)$;
\item there is $\kappa$ such that for all $X$, $\aleph^*(X)\geq\kappa\implies\aleph(X)=\aleph^*(X)$;
\item there is $\kappa$ such that for all $X$, $\aleph(X)\geq\kappa\implies\aleph(X)=\aleph^*(X)$;
\item $\AC_\WO$;
\item for all $X$, $\aleph(X)$ is a successor; and
\item for all $X$, $\aleph(X)$ is regular.
\end{enumerate}
\end{thm}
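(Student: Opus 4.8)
We aim to prove the cycle of equivalences in Theorem~\ref{thm:acwo-equiv-pelc-intro}. The plan is to establish $(4)\implies(1)$ and $(1)\implies(2),(3),(5),(6)$ as the routine directions, and then to close the loop by proving that each of $(2)$, $(3)$, $(5)$, $(6)$ implies $(4)$; the real content is in these latter implications, which will all flow from Pincus's transfer construction for eccentric sets. Recall that $\AC_\WO$ is the assertion that every well-ordered family of non-empty sets has a choice function, and that under $\AC_\WO$ every set that is a well-ordered union of well-orderable sets is itself well-orderable.

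For the easy directions: $(1)\implies(5)$ and $(1)\implies(6)$ are immediate, since $\aleph^*(X)$ is always a successor cardinal (being a Lindenbaum number, any surjection onto a limit $\lambda$ yields surjections onto all $\alpha<\lambda$, and one assembles a surjection onto $\lambda$ from surjections onto a cofinal set of smaller ordinals — hence $\aleph^*(X)$ cannot be a limit), and a successor cardinal is regular. For $(4)\implies(1)$: given $X$, let $\kappa=\aleph(X)$; the set $[X]^{<\omega}$ of finite sequences, or more precisely the collection of injections from ordinals below $\kappa$ into $X$, can be well-ordered using $\AC_\WO$ applied to a $\kappa$-indexed family, whence $X$ is well-orderable and $\aleph(X)=\aleph^*(X)=\abs X^+$. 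The implications $(1)\implies(2)$ and $(1)\implies(3)$ are trivial (take any $\kappa$, say $\kappa=\omega$).

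The heart of the argument is the reverse direction, and here I would invoke Pincus's construction in the following sharpened form: if $X$ is eccentric, then for every infinite well-ordered cardinal $\mu$ one can build an eccentric set $Y=Y(X,\mu)$ whose Hartogs and Lindenbaum numbers are controlled — pushed above $\mu$ — so that eccentricity is never confined to sets of small Hartogs or Lindenbaum number. Concretely, the fine-tuning should produce, from a single eccentric $X$, eccentric sets with $\aleph(Y)$ equal to an arbitrary prescribed limit cardinal (e.g. $\aleph_\omega$ or a singular cardinal of our choosing), and simultaneously with $\aleph(Y)\geq\kappa$ and $\aleph^*(Y)\geq\kappa$ for any given $\kappa$. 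Granting this: if $(2)$ holds with witness $\kappa$ but $(4)$ fails, then choice fails for some well-ordered family, and a standard argument (assembling the family into a single set) produces an eccentric set $X$; transferring, we get an eccentric $Y$ with $\aleph^*(Y)\geq\kappa$, contradicting $(2)$. The same transferred $Y$ (with $\aleph(Y)\geq\kappa$) refutes $(3)$. For $(5)\implies(4)$ and $(6)\implies(4)$: if $(4)$ fails we obtain an eccentric $X$, and the transfer construction yields an eccentric $Y$ with $\aleph(Y)$ a singular (hence non-regular, non-successor) cardinal — this contradicts both $(5)$ and $(6)$ at once. Thus $\neg(4)$ contradicts each of $(2),(3),(5),(6)$, completing the cycle.

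The main obstacle is making Pincus's transfer construction precise enough to control both invariants simultaneously and to land $\aleph(Y)$ on a prescribed singular value, rather than merely "somewhere large". This requires carefully iterating the basic operation $X\mapsto\power(X)$ or $X\mapsto{}^{<\mu}X$ (modulo the right permutation-symmetry bookkeeping) while tracking how $\aleph$ and $\aleph^*$ evolve under these operations, and verifying that eccentricity is genuinely preserved at each stage — in particular that the newly constructed surjection onto $\aleph^*(Y)$ has no injective counterpart. I expect the key lemma to be a statement of the form: from eccentric $X$ one builds eccentric $X'$ with $\aleph(X')=\aleph(X)^{(+)}$ or with $\aleph(X')$ the supremum of the transferred values along an $\omega$-chain, yielding the singular target; the limit step is where the subtlety lies, since one must ensure the constructed $\omega$-sequence of eccentric sets can be amalgamated without a choice function into a single eccentric set realising the singular Hartogs number.
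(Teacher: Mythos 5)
Your plan is organized sensibly (prove $(4)\implies(1)$, the other forward implications, and close the loop by deriving $(4)$ from each of $(2),(3),(5),(6)$ via an upwards transfer), but several of the supporting claims are false and the transfer machinery is only gestured at.

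First, the claim that $\aleph^*(X)$ is always a successor cardinal in $\ZF$ is wrong. Having surjections $X\to\alpha$ for every $\alpha<\lambda$ does not give a surjection $X\to\lambda$: assembling one requires choosing a surjection for each $\alpha$ on a cofinal set, which is precisely an instance of well-ordered choice, and even then what you naturally get is a surjection from $X\times\cf(\lambda)$ onto $\lambda$, not from $X$. In fact $\aleph^*(X)$ can consistently be a limit (the cited work of Karagila and Ryan-Smith realises arbitrary pairs $\lambda\leq\kappa$ as Hartogs and Lindenbaum numbers of a single set, $\kappa$ limit included). Consequently your proofs of $(1)\implies(5)$ and $(1)\implies(6)$ collapse; the paper instead proves $(4)\implies(5)$ directly from $\AC_\WO$, by choosing an injection $\mu\to X$ for each $\mu$ below a limit bound and concatenating, and $(4)\implies(6)$ using that $\AC_\WO$ makes successor cardinals regular (which is itself not a $\ZF$ theorem; cf.\ the Feferman--L\'evy model).

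Second, your $(4)\implies(1)$ argument concludes that $X$ is well-orderable, but $\AC_\WO$ certainly does not imply every set is well-orderable, else it would imply $\AC$. The correct argument is pointwise: given a surjection $f\colon X\to\alpha$ with $\alpha<\aleph^*(X)$, apply $\AC_\WO$ to the well-ordered family $\{f^{-1}(\beta)\mid\beta<\alpha\}$ to obtain a choice function $c\colon\alpha\to X$, which is an injection since the fibres are disjoint; hence $\aleph(X)>\alpha$, giving $\aleph(X)\geq\aleph^*(X)$.

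Third, the transfer step is where the real content lives, and you acknowledge you haven't carried it out. The paper's version is an induction on $\delta$ proving $\AC_{\aleph_\delta}$: given a family $X=\{X_\alpha\mid\alpha<\aleph_\delta\}$ and the inductive hypothesis $\AC_{<\aleph_\delta}$, one builds $D$ and a limit cardinal $\lambda$ so that $\aleph(D)\geq\lambda$ and $\aleph^*(D)\geq\lambda^+$, with $\lambda$ automatically singular (because $\cf(\lambda)\leq\aleph_\delta<\lambda$), and moreover an injection $\lambda\to D$ would recover a choice function for $X$. There is no need to aim $\aleph(Y)$ at a prescribed singular target, and the induction sidesteps the amalgamation worry you flag at limit stages; the role of the inductive hypothesis is exactly to guarantee $\prod_{\beta<\alpha}X_\beta\neq\emptyset$ so that the building blocks of $D$ are non-empty. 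You should formulate this as a clean proposition about the constructed pair $(D,\lambda)$ and then read off each of $(3),(5),(6)\implies(4)$ from it.
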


\begin{rk}
While we would like to include the statements ``for all $X$, $\aleph^*(X)$ is a successor'' and ``for all $X$, $\aleph^*(X)$ is regular'' in our theorem,\footnote{For symmetry, if nothing else.} this does not hold: In Cohen's first model, $\AC_\WO$ fails, but $\aleph^*(X)$ is a regular successor for all $X$. This is proved in Section~\ref{s:spetra;ss:cohens} as Corollary~\ref{cor:no-reverse-implication}.
\end{rk}

To take Theorem~\ref{thm:acwo-equiv-pelc-intro} further, given that $\AC_\WO$ is not a consequence of $\ZF$, it is quite possible to build models of $\ZF$ in which we have sets $X$ such that ${\aleph(X)\neq\aleph^*(X)}$. Let us produce a classification tool for such objects.

\begin{defn}[Hartogs--Lindenbaum Spectrum]
Given a model $M$ of $\ZF$, the \emph{Hartogs--Lindenbaum spectrum} (or simply \emph{spectrum}) of $M$, denoted by $\Spec_\aleph(M)$, is the class
\begin{equation*}
\Spec_\aleph(M)\defeq\{\tup{\lambda,\kappa}\mid(\exists X)\aleph(X)=\lambda,\aleph^*(X)=\kappa\}.
\end{equation*}
\end{defn}

This paper explores the possible spectra of models of $\ZF$ that arise as symmetric extensions of models of $\AC$. This behaviour is captured internally to a model by \emph{small violations of choice}, or $\SVC$. This axiom, introduced in \cite{blass_injectivity_1979}, is the statement ``There is a set $S$ such that for all $X$ there is an ordinal $\eta$ and a surjection ${f\colon S\times\eta\to X}$'', but is also surprisingly a precise description of being a symmetric extension of a model of $\AC$, by \cite{usuba_choiceless_2021}. Indeed, $\SVC$ is equivalent to several statements relating to symmetric extensions and $\AC$, and is introduced more thoroughly in Section~\ref{s:preliminaries;ss:svc}.

In any model of $\SVC$, the Hartogs--Lindenbaum spectrum is broken down into four parts.

\begin{mainthm}
Let $M\vDash\SVC$. Then there are cardinals $\phi\leq\psi\leq\chi_0\leq\Omega$, a cardinal $\psi^*\geq\psi$, a cardinal $\chi\in[\chi_0,\chi_0^+]$, and a set $C\subseteq[\phi,\chi_0)$ such that
\begin{equation*}
\Spec_\aleph(M)=\bigcup\left\{\begin{alignedat}{2}
\Suc&=\{\tup{\lambda^+,\lambda^+}&&\mid\lambda\in\Card\}\\
\frakD&\subseteq\{\tup{\lambda,\kappa}&&\mid\psi\leq\lambda\leq\kappa\leq\chi,\psi^*\leq\kappa\}\\
\frakC&\subseteq\{\tup{\lambda,\lambda^+}&&\mid\cf(\lambda)\in C,\lambda<\Omega\}\\
\frakU&=\{\tup{\lambda,\lambda^+}&&\mid\cf(\lambda)\in C,\lambda\geq\Omega\}.
\end{alignedat}\right.
\end{equation*}
\end{mainthm}

That is, there is a necessary core to the spectrum $\{\tup{\lambda^+,\lambda^+}\mid\lambda\in\Card\}$ that every model of $\ZF$ contains, since $\aleph(\lambda)=\aleph^*(\lambda)=\lambda^+$ for all cardinals $\lambda$; there is a bounded chaotic part of the spectrum containing those $\tup{\lambda,\kappa}$ that have no restrictions other than $\psi\leq\lambda\leq\kappa\leq\chi$, and $\psi^*\leq\kappa$; there is a bounded but potentially irregular part of the spectrum in which the only values are $\tup{\lambda,\lambda^+}$ for those $\lambda$ such that $\cf(\lambda)\in C$, with $\lambda<\Omega$; and there is an unbounded, controlled tail of the spectrum containing precisely $\tup{\lambda,\lambda^+}$ for all $\lambda$ such that $\cf(\lambda)\in C$.

\subsection{Structure of the paper}

Section~\ref{s:preliminaries} establishes preliminaries for the paper. No knowledge of forcing will be needed for the results in Section~\ref{s:ac-wo} or Section~\ref{s:spectra;ss:svc}; the results can be understood entirely using the framework of transitive nested models of $\ZF$. However, Section~\ref{s:spetra;ss:cohens} uses the forcing framework, so we briefly introduce our standard treatment of forcing in Section~\ref{s:preliminaries;ss:forcing}. In particular, we introduce all necessary concepts for our use of $\SVC$.

In Section~\ref{s:ac-wo} we investigate the equivalence between $\AC_\WO$, $\Spec_\aleph=\Suc$, and several other equivalent conditions using core machinery that allows an `upwards transfer' of eccentricity. In Section~\ref{s:spectra} we investigate the spectrum of models of $\SVC$, providing bounds for the behaviour of such models and some instances of sharpness on those bounds.

\section{Preliminaries}\label{s:preliminaries}

\noindent Throughout this paper we work in $\ZF$. Given a set $X$, we denote by $\abs{X}$ its cardinal number. If $X$ can be well-ordered, then $\abs{X}$ is simply the least ordinal $\alpha$ such that a bijection between $\alpha$ and $X$ exists. Otherwise, we use the Scott cardinal of $X$, the set $\{Y\in V_\alpha\mid\exists f\colon X\to Y\text{ a bijection}\}$ with $\alpha$ taken minimal such that the set is non-empty. Greek letters, when used as cardinals, always refer to well-ordered cardinals. We call an ordinal $\alpha$ a cardinal if $\abs{\alpha}=\alpha$, and we shall denote by $\Card$ the class of all well-ordered cardinals.

We write $\abs{X}\leq\abs{Y}$ to mean that there is an injection from $Y$ to $X$, and ${\abs{X}\leq^*\abs{Y}}$ to mean that there is a surjection from $Y$ to $X$ or that $X$ is empty. These notations extend to $\abs{X}<\abs{Y}$ (and $\abs{X}<^*\abs{Y}$) to mean that $\abs{X}\leq\abs{Y}$ (respectively $\abs{X}\leq^*\abs{Y}$) and there is no injection from $Y$ to $X$ (respectively surjection from $X$ to $Y$). Finally, $\abs{X}=\abs{Y}$ means that there is a bijection between $X$ and $Y$.

Using this notation, one may redefine the Hartogs and Lindenbaum numbers as
\begin{align*}
\aleph(X)&\defeq\min\{\alpha\in\Ord\mid\abs{\alpha}\nleq\abs{X}\}\text{ and}\\
\aleph^*(X)&\defeq\min\{\alpha\in\Ord\mid\abs{\alpha}\nleq^*\abs{X}\}.
\end{align*}
Given two cardinals $\lambda,\kappa$, we will denote by $[\lambda,\kappa]$ the set $\{\mu\in\Card\mid\lambda\leq\mu\leq\kappa\}$. Similarly, $(\lambda,\kappa)$ will be the open interval, and $[\lambda,\kappa)$ and $(\lambda,\kappa]$ are the one-sided intervals. If $\alpha,\beta$ are instead understood to be ordinals, then we will take the intervals over the ordinals, so $[\alpha,\beta]=\{\gamma\in\Ord\mid\alpha\leq\gamma\leq\beta\}$, and so forth. Note that if $\lambda=\kappa$, then $[\lambda,\kappa)=\emptyset$.

Given two sets $A$, $B$, we shall denote by $\Inj{A}{B}$ the set of injections $A\to B$.

\subsection{Choice-like axioms}

Throughout, we shall use several axioms that can be considered to be partial fulfilments of the full strength of $\AC$. Recall that if $X$ is a set of non-empty sets, a \emph{choice function} for $X$ is a function $c\colon X\to\bigcup X$ such that for all $x\in X$, $f(x)\in x$.

\subsubsection{The axiom of choice $\AC_X$}
For any set $X$, we shall denote by $\AC_X$ the statement that all families of non-empty sets indexed by $X$ admit a choice function. If $\alpha$ is an ordinal or a cardinal, we shall denote by $\AC_{{<}\alpha}$ the statement that all families of non-empty sets indexed by some $\beta<\alpha$ admit a choice function. Finally, by $\AC_\WO$, we mean the statement that all well-orderable families of non-empty sets admit a choice function; equivalently, this can be written $(\forall\lambda\in\Card)\AC_\lambda$. When the subscript is omitted, we mean the full axiom of choice: every family of non-empty sets admits a choice function.

\subsubsection{Dependent choice}
We say that a partially ordered set $(T,\leq)$ is a \emph{tree} if $T$ has a minimum element and, for all $t\in T$, the set $\{s\in T\mid s\leq t\}$ is well-ordered by $\leq$. Given a cardinal $\lambda$, we say that $T$ is \emph{$\lambda$-closed} if all $\leq$-chains in $T$ of length less than $\lambda$ have an upper bound in $T$. Finally, the statement $\DC_\lambda$, the \emph{principle of dependent choice} (for $\lambda$), is the statement that every $\lambda$-closed tree has a maximal element or a chain of order type $\lambda$. When the subscript is omitted, we mean $\DC_\omega$. While $\DC_\lambda$ does imply $\AC_\lambda$, we do not have the reverse implication; see \cite[Theorem~8.1, Theorem~8.9]{jech_axiom_1973} for proofs.

\subsubsection{Comparability and dual comparability}
Finally, for a set $X$, we shall denote by $W_X$ the \emph{axiom of comparability (to $X$)}: for all $Y$, either $\abs{Y}\leq\abs{X}$ or $\abs{X}\leq\abs{Y}$. Similarly, we shall define the \emph{dual axiom of comparability (to $X$)}, $W_X^*$, to be the statement that for all $Y$, either $\abs{Y}\leq^*\abs{X}$ or $\abs{X}\leq^*\abs{Y}$. We immediately observe the following:
\begin{prop}\label{prop:svc-comparability-bound} $W_\lambda$ is equivalent to the statement ``for all $X$, either $X$ is well-orderable or $\aleph(X)\geq\lambda^+$''. Likewise, $W_\lambda^*$ is equivalent to the statement ``for all $X$, either $X$ is well-orderable or $\aleph^*(X)\geq\lambda^+$''.\qed
\end{prop}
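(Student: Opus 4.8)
The plan is to unwind the definitions directly. The observation that makes both equivalences transparent is that comparability against a \emph{well-ordered} cardinal is automatic for well-orderable sets, so one should first dispose of that case: if $\abs{Y}\in\Card$, then by trichotomy of the ordinals either $\abs{Y}\leq\lambda$, whence $\abs{Y}\leq\abs{\lambda}$ and $\abs{Y}\leq^*\abs{\lambda}$, or $\abs{Y}>\lambda$, whence $\abs{\lambda}\leq\abs{Y}$ and $\abs{\lambda}\leq^*\abs{Y}$. Thus for a well-orderable $Y$ every instance of $W_\lambda$ and of $W_\lambda^*$ already holds, and the content of both axioms rests entirely on the sets $Y$ that are not well-orderable.

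For $W_\lambda$: if $Y$ is not well-orderable then $\abs{Y}\leq\abs{\lambda}$ fails, since an injection $Y\to\lambda$ would induce a well-ordering of $Y$; hence the instance of $W_\lambda$ at such $Y$ is equivalent to its other disjunct $\abs{\lambda}\leq\abs{Y}$, i.e.\ to the existence of an injection $\lambda\to Y$. As $\aleph(Y)$ is the least ordinal not injecting into $Y$ and is a cardinal, this is precisely $\aleph(Y)>\lambda$, equivalently $\aleph(Y)\geq\lambda^+$. Quantifying over all $Y$ and re-incorporating the automatic well-orderable case gives the stated reformulation of $W_\lambda$, the converse being the same chain of equivalences read in reverse.

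For $W_\lambda^*$ I would run the identical argument with surjections in place of injections, the one extra ingredient being the choice-free fact that a surjection from an ordinal onto $Y$ well-orders $Y$ (order $Y$ by least preimage). So again $\abs{Y}\leq^*\abs{\lambda}$ fails for non-well-orderable $Y$, the surviving disjunct $\abs{\lambda}\leq^*\abs{Y}$ is by the (cardinal-valued) definition of $\aleph^*$ exactly $\aleph^*(Y)>\lambda$, i.e.\ $\aleph^*(Y)\geq\lambda^+$, and quantifying over $Y$ finishes the second equivalence. I do not expect a genuine obstacle: the whole argument is definition-chasing, and the only points needing a line of care are the degenerate cases $\lambda=0$ and $Y=\emptyset$ (harmless, since $\emptyset$ is well-orderable and $\aleph(X),\aleph^*(X)\geq 1$ always) and remembering that, under this paper's conventions, $\leq$ and $\leq^*$ already encode injections and surjections, so that $\aleph(Y)\geq\lambda^+$ and $\aleph^*(Y)\geq\lambda^+$ translate directly back into the surviving disjuncts of $W_\lambda$ and $W_\lambda^*$.
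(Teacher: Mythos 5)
Your argument is correct and is exactly the direct definition-chasing the paper intends by stating the proposition with an immediate \(\qed\): split on whether \(Y\) is well-orderable (trichotomy of ordinals handles that case), and for non-well-orderable \(Y\) observe that the disjunct \(\abs{Y}\leq\abs{\lambda}\) (resp.\ \(\abs{Y}\leq^*\abs{\lambda}\)) cannot hold since an injection into \(\lambda\) (resp.\ a surjection from \(\lambda\) onto \(Y\), via least-preimage) would well-order \(Y\), so \(W_\lambda\) (resp.\ \(W_\lambda^*\)) reduces to \(\abs{\lambda}\leq\abs{Y}\) (resp.\ \(\abs{\lambda}\leq^*\abs{Y}\)), which is exactly \(\aleph(Y)\geq\lambda^+\) (resp.\ \(\aleph^*(Y)\geq\lambda^+\)) because the Hartogs and Lindenbaum numbers are cardinals. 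Your handling of the degenerate cases \(\lambda=0\) and \(Y=\emptyset\) is also fine and matches the paper's convention that \(\abs{X}\leq^*\abs{Y}\) holds automatically when \(X=\emptyset\).
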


\begin{defn}
Since $(\forall\lambda)\DC_\lambda$, $(\forall\lambda)W_\lambda$, and $(\forall\lambda)W_\lambda^*$ are all equivalent to $\AC$, whenever $M$ is a model of $\ZF+\lnot\AC$ we shall denote by $\lambda_\DC$ (respectively $\lambda_W$, $\lambda_W^*$) the least cardinal $\lambda$ such that $\DC_\lambda$ (respectively $W_\lambda$, $W_\lambda^*$) does not hold.
\end{defn}

\subsubsection{Small violations of choice}\label{s:preliminaries;ss:svc}

In \cite{blass_injectivity_1979}, the author introduces a choice-like axiom called \emph{small violations of choice}, also written $\SVC$. At its inception, it was defined by setting $\SVC(S)$ to be ``for all $X$ there is an ordinal $\eta$ and a surjection $f\colon\eta\times S\to X$'', where $S$ is a set (known as the \emph{seed}). We then use $\SVC$ to mean $(\exists S)\SVC(S)$. However, this is equivalent to several other statements.
\begin{fact}[{\cite{blass_injectivity_1979},\cite{usuba_choiceless_2021}}]\label{fact:svc-tfae}
The following are equivalent:
\begin{enumerate}[label=\textup{(\arabic*)}]
\item $M\vDash\SVC$;
\item\label{fact:svc-tfae;condition:injseed} $M\vDash``$There is a set $A$ such that for all $X$ there is an ordinal $\eta$ and an injection $f\colon X\to A\times\eta"$;
\item there is an inner model $V\subseteq M$ such that $V\vDash\ZFC$ and there is a symmetric system $\tup{\bbP,\sG,\sF}\in V$ such that $M=\HS_\sF^G$ for some $V$-generic $G\subseteq\bbP$;
\item there is an inner model $V\subseteq M$ such that $V\vDash\ZFC$ and there is $x\in M$ such that $M=V(x)$; and
\item there is a notion of forcing $\bbP\in M$ such that $\1_\bbP\forces\AC$.
\end{enumerate}
\end{fact}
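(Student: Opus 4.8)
The plan is to prove the cycle $(1)\Rightarrow(2)\Rightarrow(5)\Rightarrow(3)\Rightarrow(4)\Rightarrow(1)$; the first two and the last two implications are comparatively soft, and $(5)\Rightarrow(3)$ carries the substantive content — it is where Usuba's theorem does its work. For $(1)\Rightarrow(2)$, if $S$ is a seed put $A=\power(S)$: given a surjection $g\colon\eta\times S\to X$, let $\alpha_x$ be least such that $x\in g[\{\alpha_x\}\times S]$, and send $x$ to $\tup{\alpha_x,\{s\in S\mid g(\tup{\alpha_x,s})=x\}}$; this is an injection $X\to\eta\times\power(S)$, which re-coordinatises as an injection $X\to A\times\eta$. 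Conversely, if $f\colon X\to A\times\eta$ is injective then $\tup{a,\beta}\mapsto f^{-1}(\tup{a,\beta})$, padded by a fixed element of $X$, is a surjection $A\times\eta\to X$, so $A$ is a seed; hence $(1)\Leftrightarrow(2)$, and I use the two forms interchangeably.

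For $(2)\Rightarrow(5)$, let $A$ witness $(2)$ and let $\bbP\in M$ be the poset of injections from an ordinal into $A$, ordered by reverse inclusion. If $G$ is $\bbP$-generic over $M$, then $\bigcup G$ injects an ordinal into $A$ and, by genericity, its range is all of $A$, so $A$ is well-orderable in $M[G]$. If $Y\in M[G]$, fix $\alpha$ with $Y\subseteq V_\alpha^{M[G]}$; there is a set $N\in M$ of $\bbP$-names admitting a surjection $N\to V_\alpha^{M[G]}$ definable in $M[G]$, and by $(2)$, applied in $M$, $N$ injects into $A\times\eta$ for some ordinal $\eta$, which is well-orderable in $M[G]$. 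A surjective image of a well-orderable set is well-orderable (send a point to the least member of its preimage), so $V_\alpha^{M[G]}$, and hence $Y$, is well-orderable in $M[G]$; thus $M[G]\vDash\AC$, and as $G$ was arbitrary, $\1_\bbP\forces\AC$. (The converse $(5)\Rightarrow(1)$, the easy half of Blass's theorem, is by name-coding: take $\dot f$ forced by $\1$ to be a bijection of $\check X$ onto an ordinal, decide each $\dot f(\check x)$ as $\check\gamma$ along a fixed maximal antichain, and send $x$ to the resulting set of pairs $\tup{p,\gamma}$; this injects $X$ into $\power(\bbP\times\beta)$ for a suitable $\beta$, so a seed exists.)

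The heart of the matter is $(5)\Rightarrow(3)$. Fix $\bbP\in M$ with $\1_\bbP\forces\AC$ and take $G$ generic over $M$, so $M\subseteq M[G]\vDash\ZFC$. One must (a) locate a $\ZFC$ inner model $V\subseteq M$ that is a ground of $M[G]$ and satisfies $M=V(x)$ for a single $x\in M$; and then (b) appeal to the $\ZF$-analogue of Grigorieff's theorem — a $\ZF$-model lying between a $\ZFC$ ground $V$ and a set-generic extension $V[H]$ of it is a symmetric extension of $V$ via a symmetric system in $V$ — to produce $\tup{\bbP,\sG,\sF}\in V$ and a generic $G'$ with $M=\HS_\sF^{G'}$. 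For (a) the idea is to recover $V$ as a $\mathrm{HOD}$-style class computed in $M[G]$ but sitting inside $M$: fix a parameter $p$ coding the forcing data together with a well-ordering, available in $M[G]$, of the relevant part of $\bbP$, let $V$ consist of those members of $M$ hereditarily definable in $M[G]$ from ordinals and $p$, and then verify that $V$ is a transitive model of $\ZFC$, that $V$ is a ground of $M[G]$, and that $M=V(x)$ for a single $x\in M$ built from $G$ and $\bbP$. Verifying these closure, ground-hood, and generation properties is the main obstacle, and is precisely the content of Usuba's theorem.

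It remains to close the cycle with two soft implications. $(3)\Rightarrow(4)$: a symmetric extension $M=\HS_\sF^{G'}$ by a set-sized symmetric system $\tup{\bbP,\sG,\sF}\in V$ is of the form $V(x)$ — the symmetric structure being set-sized, the symmetric model is generated over $V$ by one set built from the generic object — so $M=V(x)$ with $V\vDash\ZFC$. $(4)\Rightarrow(1)$: if $M=V(x)$ with $V\vDash\ZFC$, set $t\defeq\operatorname{tc}(\{x\})$ and run the relative-constructibility hierarchy building $M$ from $V$ and $t$; an induction shows each level of this hierarchy is, inside $M$, a surjective image of (an ordinal)$\,\times\,t^{\lomega}$, since each level is obtained from earlier ones by definitions with finitely many parameters drawn from $t$ and from $V$, and each $V_\gamma^V$ is well-orderable in $M$. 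Hence every set in $M$ is such a surjective image, so $t^{\lomega}$ is a seed and $M\vDash\SVC$. This closes the cycle.
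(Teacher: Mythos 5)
The paper does not prove this Fact; it cites Blass and Usuba, so your proposal is a blind reconstruction of those proofs. Your $(1)\Leftrightarrow(2)$ matches the paper's own Proposition proved directly below the Fact (the $\power(S)$ trick), your $(2)\Rightarrow(5)$ is Blass's well-ordering-of-the-seed poset, your $(4)\Rightarrow(1)$ is the correct seed ($\mathrm{tc}(\{x\})^{<\omega}$), and your $(5)\Rightarrow(3)$ correctly defers its weight to Usuba. However, there is a genuine gap at $(3)\Rightarrow(4)$. You assert it is soft --- ``the symmetric structure being set-sized, the symmetric model is generated over $V$ by one set built from the generic object'' --- but this is precisely the nontrivial claim that a symmetric extension is of the form $V(x)$, and the obvious candidate for $x$ does not work. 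If you take $x=\{\ddy^G\mid\ddy\in\HS,\ \mathrm{rank}(\ddy)<\alpha\}$ (the set of realisations of low-rank symmetric names), then $V(x)\subseteq\HS^G$ is clear, but the reverse inclusion is not: to recover $\ddz^G$ for a high-rank $\ddz\in\HS$ one needs to know \emph{which} $\ddy^G$ actually land in $\ddz^G$, which depends on $G$-information not visible from $x$ and $V$ alone (note $G\notin M$, and the evaluation map $\ddy\mapsto\ddy^G$ is not symmetric, so it is not in $M$ either). The honest route here is $(3)\Rightarrow(1)$ (Blass: a symmetric extension satisfies $\SVC$, proved by producing a seed from the set-sized symmetric data) composed with $(1)\Rightarrow(4)$, or to treat $(3)\Rightarrow(4)$ as another piece of Usuba's theorem rather than a throwaway.

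Two smaller issues. First, your statement of the ``$\ZF$-analogue of Grigorieff'' is too strong as written: it is \emph{not} true that every $\ZF$-model lying between a $\ZFC$ ground $V$ and a set-generic extension $V[H]$ is a symmetric extension of $V$ --- the Bristol model sits between $L$ and $L[c]$ for a single Cohen real $c$ yet is not a symmetric extension of $L$. The correct hypothesis is that the intermediate model is of the form $V(x)$, which is exactly why your step (a) is needed first; you should restrict the statement of (b) accordingly. Second, the parenthetical $(5)\Rightarrow(1)$ ends with ``this injects $X$ into $\power(\bbP\times\beta)$ for a suitable $\beta$, so a seed exists,'' but $\power(\bbP\times\beta)$ varies with $X$ through $\beta$, so this does not yet exhibit a single seed; the clean finish is to observe that for a name $\dot f$ of an injection of $\check X$ into $\check\beta$, the map sending $\tup{\gamma,p}$ to the unique $x$ with $p\forces\dot f(\check x)=\check\gamma$ (when it exists, else a default) is a surjection $\beta\times\bbP\to X$, so $\bbP$ itself is a seed. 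Since this implication is outside your chosen cycle it does no harm, but it is worth getting right.
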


\begin{defn}[Injective Seed]
We shall say that a set $A$ is an \emph{injective seed} for $M\vDash\ZF$ if it satisfies Condition~\ref*{fact:svc-tfae;condition:injseed}. That is, for all $X\in M$ there is an ordinal $\eta$ and an injection $f\colon X\to A\times\eta$ in $M$.
\end{defn}

\begin{prop}
$M\vDash\SVC$ if and only if $M$ has an injective seed.
\end{prop}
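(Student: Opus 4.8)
The plan is to read this off Fact~\ref{fact:svc-tfae}: Condition~\ref*{fact:svc-tfae;condition:injseed} of that fact is, word for word, the statement that $M$ has an injective seed, so the proposition is exactly the equivalence of its conditions~(1) and~(2), and a one-line proof would simply cite it. The only subtlety worth spelling out is that the ``$M\vDash$'' in Condition~\ref*{fact:svc-tfae;condition:injseed} unpacks to precisely the external phrasing used in the Definition of injective seed, because the assertions involved (``$\eta$ is an ordinal'' and ``$f$ is an injection of $X$ into $A\times\eta$'') are absolute between the transitive model $M$ and $V$.

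Since Fact~\ref{fact:svc-tfae} is quoted from the literature and its proof is forcing-theoretic, I would also like to record the short, self-contained argument for this particular equivalence. First I would check that an injective seed $A$ for $M$ already witnesses $\SVC$ in its original surjective form: given $X$, if $X=\emptyset$ take $\eta=0$, and otherwise fix $x_0\in X$ together with an injection $f\colon X\to A\times\eta$ and define $g\colon\eta\times A\to X$ by letting $g(\beta,a)=f^{-1}(a,\beta)$ whenever $(a,\beta)$ lies in the range of $f$ and $g(\beta,a)=x_0$ otherwise. This $g$ is a well-defined surjection, so $\SVC(A)$ holds.

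Conversely, starting from $\SVC(S)$, I would verify that $\power(S)$ is an injective seed. Given $X$, take an ordinal $\eta$ and a surjection $g\colon\eta\times S\to X$; for each $x\in X$ let $\beta_x$ be the least $\beta<\eta$ with $g(\beta,s)=x$ for some $s\in S$, put $P_x=\{s\in S\mid g(\beta_x,s)=x\}$, and set $h(x)=\tup{P_x,\beta_x}\in\power(S)\times\eta$. If $h(x)=h(x')$ then $\beta_x=\beta_{x'}$ and $P_x=P_{x'}\neq\emptyset$, so taking any $s\in P_x$ yields $x=g(\beta_x,s)=g(\beta_{x'},s)=x'$; thus $h$ is an injection and $\power(S)$ is an injective seed.

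I do not anticipate a genuine obstacle. The only thing to watch is that $g$ and $h$ are built in $\ZF$ without a hidden use of choice: each is given by an explicit formula in the data at hand (from $f$, $x_0$, $\eta$ in the first case; from $g$, $\eta$ in the second), and the sole selections made — one element $x_0$ of a non-empty set, and one $s\in P_x$ inside the verification of an implication — are harmless. The mild bookkeeping around the ``$M\vDash$'' clause noted above is the closest thing to a wrinkle.
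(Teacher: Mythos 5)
Your proof is correct and matches the paper's in essence: the nontrivial direction is the same construction taking $\power(S)$ as the injective seed, choosing the least ordinal in the fibre and recording the corresponding set of $s\in S$. The paper dispatches the easy direction (injective seed $\Rightarrow\SVC$) in a single sentence, whereas you spell out the explicit surjection; your initial observation that the statement could simply be cited as the (1)$\iff$(2) clause of Fact~\ref{fact:svc-tfae} is also fair.
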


\begin{proof}
Suppose that $A$ is an injective seed for $M$. Then $M\vDash\SVC(A)$.

On the other hand, suppose that $S$ is a seed for $M$. We claim that $\power(S)$ is an injective seed for $M$. Indeed, suppose that $X\in M$, and let ${f\colon\eta\times S\to X}$ be a surjection. For $x\in X$, let $\alpha_x=\min\{\alpha<\eta\mid(\exists s\in S)f(\alpha,s)=x\}$. Since $f$ is a surjection, $x\mapsto\alpha_x$ is well-defined. Then define $g\colon X\to\power(S)\times\eta$ via ${g(x)=\tup{\{s\in S\mid f(\alpha_x,s)=x\},\alpha_s}}$. Then whenever $g(x)=g(y)$, we have ${\alpha_x=\alpha_y}$, and thus for all $s$ such that $f(\alpha_x,s)=x$, we have $f(\alpha_y,s)=y$. However, ${f(\alpha_x,s)=f(\alpha_y,s)}$, so $x=y$ as required.
\end{proof}

\subsection{Forcing}\label{s:preliminaries;ss:forcing}

By a \emph{notion of forcing} we mean a preordered set $\bbP$ with maximum element denoted $\1_\bbP$, or with the subscript omitted when clear from context. We write $q\leq p$ to mean that $q$ \emph{extends} $p$. Two conditions $p,p'$ are said to be \emph{compatible}, written $p\comp p'$, if they have a common extension. We follow Goldstern's alphabet convention so $p$ is never a stronger condition than $q$, etc.

When given a collection of $\bbP$-names, $\{\ddx_i\mid i\in I\}$, we will denote by $\{\ddx_i\mid i\in I\}^\bullet$ the canonical name this class generates: $\{\tup{\1,\ddx_i}\mid i\in I\}$. The notation extends naturally to ordered pairs and functions with domains in the ground model. Given a set $x$, the check name for $x$ is defined inductively as $\check{x}=\{\check{y}\mid y\in x\}^\bullet$.

\subsubsection{Symmetric extensions}

It is key to the role of forcing that if $V\vDash\ZFC$, and $G$ is $V$-generic for some notion of forcing $\bbP\in V$, then $V[G]\vDash\ZFC$. However, this demands additional techniques for trying to establish results that are inconsistent with $\AC$. Symmetric extensions extend the technique of forcing in this very way by constructing an intermediate model between $V$ and $V[G]$ that is a model of $\ZF$.

Given a notion of forcing $\bbP$, we shall denote by $\Aut(\bbP)$ the collection of automorphisms of $\bbP$. Let $\bbP$ be a notion of forcing and $\pi\in\Aut(\bbP)$. Then $\pi$ extends naturally to act on $\bbP$-names by recursion: $\pi\ddx=\{\tup{\pi p,\pi\ddy}\mid\tup{p,\ddy}\in\ddx\}$.

Such automorphisms extend to the forcing relation in the following way, proved in \cite[Lemma~14.37]{jech_set_2003}.
\begin{lem}[The Symmetry Lemma]
Let $\bbP$ be a notion of forcing, $\pi\in\Aut(\bbP)$, and $\ddx$ a $\bbP$-name. Then $p\forces\vphi(\ddx)$ if and only if $\pi p\forces\vphi(\pi\ddx)$.\qed
\end{lem}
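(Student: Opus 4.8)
The plan is to prove the Symmetry Lemma by reducing it to the recursively-defined forcing relation $\forces^*$ and then inducting on the complexity of $\vphi$. Recall (the Forcing Theorem) that $p\forces\vphi$ is provably equivalent to $p\forces^*\vphi$, where $\forces^*$ is defined in the ground model by transfinite recursion --- first on the ranks of the $\bbP$-names for atomic $\vphi$, then on the logical complexity of $\vphi$ --- using only the partial order $\leq$ on $\bbP$, the relation ``$\tup{q,\ddy}\in\ddx$'' between conditions and names, the notion ``$D$ is dense below $p$'', and quantification over the class of all $\bbP$-names. I would prove the lemma for $\forces^*$, in the more general form allowing any finite tuple of name parameters $\ddx_1,\dots,\ddx_n$ (which is what the induction actually requires), and then transfer the conclusion back to $\forces$ via the Forcing Theorem.

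The single structural fact doing all the work is that each $\pi\in\Aut(\bbP)$ is simultaneously (i) an order-automorphism of $\parenth{\bbP,\leq}$, so it preserves $\leq$ and compatibility and sends sets dense below $p$ to sets dense below $\pi p$; and (ii) a class bijection of the $\bbP$-names onto themselves with $\tup{q,\ddy}\in\ddx$ if and only if $\tup{\pi q,\pi\ddy}\in\pi\ddx$, which is immediate from the defining recursion $\pi\ddx=\{\tup{\pi p,\pi\ddy}\mid\tup{p,\ddy}\in\ddx\}$. Since every clause in the recursive definition of $\forces^*$ is assembled from exactly these ingredients, applying $\pi$ uniformly throughout a clause turns a true instance of the forcing relation into a true instance with $p$ replaced by $\pi p$ and each name parameter replaced by its image under $\pi$; the converse is then free, applying the same observation to $\pi^{-1}\in\Aut(\bbP)$.

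Carrying this out: for the atomic cases $\ddx\in\ddy$ and $\ddx=\ddy$ I would run the standard simultaneous induction on the maximum of the ranks of the two names, at each step rewriting the defining condition through $\pi$ using (i) and (ii). For $\vphi\equiv\psi\wedge\theta$ and $\vphi\equiv\lnot\psi$ the inductive step is immediate from the induction hypothesis together with $\pi$ being an order-bijection of $\bbP$. For $\vphi\equiv\exists y\,\psi(y)$ --- where $p\forces^*\vphi$ iff $\{q\leq p\mid(\exists\bbP\text{-name }\dda)\;q\forces^*\psi(\dda)\}$ is dense below $p$ --- I would apply the induction hypothesis to $\psi$, use that $\dda\mapsto\pi\dda$ is a bijection of the class of all $\bbP$-names (so the inner existential ranges over ``the same'' names after translation, any witness $\dda$ below $q$ corresponding to $\pi\dda$ below $\pi q$), and use that $\pi$ preserves density below a condition. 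Finally, $p\forces\vphi(\ddx)\iff p\forces^*\vphi(\ddx)\iff\pi p\forces^*\vphi(\pi\ddx)\iff\pi p\forces\vphi(\pi\ddx)$.

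The part needing genuine care is the atomic base case: one must fit the mutual recursion between ``$\forces^*\cdots\in\cdots$'' and ``$\forces^*\cdots=\cdots$'' to the induction on name ranks and verify that the slightly fiddly density clauses in those definitions really are invariant under $\pi$; the only other subtlety is remembering in the existential clause that $\pi$ permutes the proper class of names bijectively, so nothing is lost in translation. (One could instead argue semantically --- if $G$ is $V$-generic then so is $\pi[G]$, one proves $(\pi\ddx)^{\pi[G]}=\ddx^G$ by induction on names and $V[G]=V[\pi[G]]$, and reads off the equivalence --- but that presupposes the existence of generic filters, so routing through $\forces^*$ is cleaner.)
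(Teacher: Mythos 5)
The paper does not prove this lemma; it simply cites it as Lemma~14.37 of Jech's \emph{Set Theory}, so there is no in-paper argument to compare against. Your proof via the recursively-defined relation $\forces^*$ is correct and is the standard textbook route: the two observations that $\pi$ is simultaneously an order-automorphism of $\parenth{\bbP,\leq}$ (hence preserves density-below) and a rank-preserving class bijection of names commuting with the $\tup{q,\ddy}\in\ddx$ relation are exactly what make every clause of the definition of $\forces^*$ invariant, and the final transfer back to $\forces$ via the forcing theorem is routine. You also correctly flag the one place that needs care (the mutual recursion for the atomic $=$ and $\in$ cases on name rank) and correctly identify the semantic alternative via $\pi[G]$ and $(\pi\ddx)^{\pi[G]}=\ddx^G$ together with its drawback; nothing is missing.
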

\noindent Note in particular that for all $\pi\in\Aut(\bbP)$ we have $\pi\1=\1$. Therefore, $\pi\check{x}=\check{x}$ for all ground model sets $x$, and $\pi\{\ddx_i\mid i\in I\}^\bullet=\{\pi\ddx_i\mid i\in I\}^\bullet$, similarly extending to tuples, functions, etc.

Given a group $\sG$, a \emph{filter of subgroups} of $\sG$ is a set $\sF$ of subgroups of $\sG$ that is closed under supergroups and finite intersections. We say that $\sF$ is \emph{normal} if whenever $H\in\sF$ and $\pi\in\sG$, then $\pi H\pi^{-1}\in\sF$.

A \emph{symmetric system} is a triple $\PGF$ such that $\bbP$ is a notion of forcing, $\sG$ is a group of automorphisms of $\bbP$, and $\sF$ is a normal filter of subgroups of $\sG$. Given such a symmetric system, we say that a $\bbP$-name $\ddx$ is \emph{$\sF$-symmetric} if $\sym_\sG(\ddx)=\{\pi\in\sG\mid\pi\ddx=\ddx\}\in\sF$. $\ddx$ is \emph{hereditarily $\sF$-symmetric} if this notion holds for every $\bbP$-name hereditarily appearing in $\ddx$. We denote by $\HS_\sF$ the class of hereditarily $\sF$-symmetric names. When clear from context, we will omit subscripts and simply write $\sym(\ddx)$ or $\HS$. The following theorem, \cite[Lemma~15.51]{jech_set_2003}, is then key to the study of symmetric extensions.
\begin{thm}
Let $\PGF$ be a symmetric system, $G\subseteq\bbP$ a $V$-generic filter, and let $M$ denote the class $\HS^G_\sF=\{\ddx^G\mid\ddx\in\HS_\sF\}$. Then $M$ is a transitive model of $\ZF$ such that $V\subseteq M\subseteq V[G]$.\qed
\end{thm}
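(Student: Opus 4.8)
The plan is to establish the two non-trivial inclusions together with transitivity, and then to verify $\ZF$ in $M$, working over $V$ and $V[G]$ throughout. That $M\subseteq V[G]$ is immediate, since members of $\HS_\sF$ are $\bbP$-names and $V[G]$ is exactly the class of their evaluations. For $V\subseteq M$: each check name $\check v$ is hereditarily $\sF$-symmetric, because $\pi\check v=\check v$ for all $\pi\in\sG$ (so $\sym(\check v)=\sG\in\sF$) and every name hereditarily occurring in $\check v$ is again a check name; since $\check v^G=v$ this gives $V\subseteq M$, and in particular $\Ord\subseteq M$. Transitivity is built into the hereditary clause defining $\HS_\sF$: if $x=\dot x^G$ with $\dot x\in\HS_\sF$ and $y\in x$, then $y=\dot y^G$ for some $\dot y$ occurring hereditarily in $\dot x$, hence $\dot y\in\HS_\sF$ and $y\in M$.

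For $M\vDash\ZF$, Extensionality and Foundation follow from transitivity together with $V[G]\vDash\ZF$, and Infinity from $\omega\in M$. The remaining axioms are proved by building, from hereditarily symmetric names for given sets, hereditarily symmetric names for the required sets; the recurring mechanism is that, by the Symmetry Lemma and the closure of $\sF$ under supergroups, finite intersections, and conjugation, the symmetry group of the new name contains the intersection of the symmetry groups of the old ones, an element of $\sF$, while its hereditary constituents were already hereditarily symmetric. Thus Pairing is witnessed by $\{\dot a,\dot b\}^\bullet$, Union by $\dot u=\{\tup{p,\dot z}\mid\exists\,\tup{q,\dot y}\in\dot a\ \exists\,\tup{r,\dot z}\in\dot y\ (p\leq q,\,p\leq r)\}$, and Power Set by (the evaluation of) a set-sized name collecting all $\dot b\in\HS_\sF$ with $\dot b\subseteq\bbP\times\dom(\dot a)$, cut down to the genuine subsets of $a$ using Separation. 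For Separation one uses that $M$ is a definable subclass of $V[G]$ — since $y\in M$ iff some $\dot y\in\HS_\sF$ has $\dot y^G=y$, and $\HS_\sF$ is definable over $V$ from the set parameter $\PGF$ — so that the Forcing Theorem, applied to the relativisation $\vphi^M$, yields a relation $p\Vdash^M\vphi(\dot x_1,\dots,\dot x_n)$ that is definable over $V$ and, by normality of $\sF$, invariant under $\sG$; the name $\dot s=\{\tup{p,\dot y}\mid(\exists\,q\geq p)\,\tup{q,\dot y}\in\dot a,\ p\Vdash^M(\dot y\in\dot a\wedge\vphi(\dot y,\dot z_1,\dots))\}$ then witnesses the relevant instance, its invariance under $\sym(\dot a)\cap\sym(\dot z_1)\cap\dots\in\sF$ following from the Symmetry Lemma for $\Vdash^M$. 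Replacement then follows from Collection plus Separation, Collection being handled similarly: for a set $a\in M$ one builds a symmetric name collecting witnesses, and — rather than choosing witnesses non-equivariantly — takes \emph{all} $\HS_\sF$-names below a suitable rank. (Alternatively one could bypass the relativised forcing relation by appealing to Gödel's criterion, checking instead that $M$ is closed under the Gödel operations and almost universal in $V[G]$; the bookkeeping is of comparable weight.)

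The two places where real work is needed, and where I expect the main difficulties to lie, are: first, the relativised Forcing Theorem just invoked — i.e.\ the definability over $V$ and $\sG$-invariance of $\Vdash^M$, proved by induction on formulas, the quantifier step using that any witness in $M$ is the evaluation of some hereditarily symmetric name; and second, the rank-bounding argument underlying Power Set and Collection — given $A\subseteq M$ with $A\in V[G]$, one needs $A$ contained in the evaluation of a symmetric name, and to this end observes that (as rank is absolute and $\HS_\sF$ is $V$-definable) the least rank of an $\HS_\sF$-name for a given $x\in A$ is a $V[G]$-definable function of $x$, hence bounded by some $\rho$ via Replacement in $V[G]$, whereupon $\{\dot y\mid\dot y\in\HS_\sF,\ \dot y\in V_{\rho'}\}^\bullet$ works for any $\rho'$ large enough to absorb the uniformly bounded rank distortion caused by the action of $\sG$. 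Everything else — confirming that each constructed name is \emph{hereditarily} $\sF$-symmetric and evaluates as intended, and checking Extensionality, Foundation, and Infinity — is routine bookkeeping; note that no form of $\AC$ is claimed for $M$, and indeed it generally fails there.
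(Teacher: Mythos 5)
Your proof is correct and follows the standard argument. The paper does not prove this theorem itself but cites \cite[Lemma~15.51]{jech_set_2003}, and the route you take — $V\subseteq M$ via check names, transitivity from the hereditary clause, Extensionality/Foundation/Infinity from transitivity in $V[G]$, and the remaining axioms via explicitly built symmetric names together with the $\sG$-invariant relativised forcing relation and a rank-bounding argument for Power Set and Collection — is essentially the one given there.
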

Finally, we have a forcing relation for symmetric extensions $\forces^\HS$ defined by relativising the forcing relation $\forces$ to the class $\HS$. This relation has the same properties and behaviour of the standard forcing relation $\forces$. Moreover, when $\pi\in\sG$, the Symmetry Lemma holds for $\forces^\HS$.

\section{$\AC_\WO$}\label{s:ac-wo}

$\AC_\WO$, the axiom of choice for all well-orderable families of non-empty sets, is known to be equivalent to the statement $(\forall X)\aleph(X)=\aleph^*(X)$, and the proof make use of the idea of transferring eccentricity upwards. This idea is best explained by proving the theorem.

\begin{thm}[{\cite{pelc_on_1978}}]\label{thm:acwo-equiv-pelc}
$\AC_\WO$ is equivalent to $(\forall X)\aleph(X)=\aleph^*(X)$.
\end{thm}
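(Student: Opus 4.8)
I want to prove that $\AC_\WO$ holds if and only if $\aleph(X) = \aleph^*(X)$ for every set $X$. The easy direction is the forward implication: assuming $\AC_\WO$, every set $X$ with $\aleph^*(X) = \kappa$ admits a surjection onto every $\alpha < \kappa$, and using a choice function on the well-ordered family $\{\Inj{\gamma}{X} \mid \gamma < \kappa\}$ of non-empty sets — or rather, choosing for each $\alpha < \kappa$ a surjection $X \to \alpha$ and inverting it on a transversal — one assembles an injection $\kappa' \to X$ for each $\kappa' < \kappa$, whence $\aleph(X) \geq \kappa = \aleph^*(X)$; combined with the trivial $\aleph(X) \leq \aleph^*(X)$ this gives equality. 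More cleanly: under $\AC_\WO$, if $\alpha \leq^* \abs{X}$ then $\alpha \leq \abs{X}$, since a surjection $X \to \alpha$ splits via a choice function on its fibres; so $\aleph(X) = \aleph^*(X)$.

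**The hard direction.** For the converse I assume $\AC_\WO$ fails, so there is a cardinal $\lambda$ and a family $\{A_i \mid i < \lambda\}$ of non-empty sets with no choice function; I must produce an eccentric set $X$, i.e. one with $\aleph(X) < \aleph^*(X)$. The natural candidate is built from $A = \bigcup_{i<\lambda} (\{i\} \times A_i)$ or a suitable power/product thereof. The key idea — the "upwards transfer of eccentricity" flagged in the text — is that one first extracts from the failure of choice a set $Y$ that already is eccentric at some level, and then, if its Hartogs number is inconveniently small relative to what one wants, one transfers: given eccentric $Y$ and any $\kappa \geq \aleph(Y)$, one forms a set whose Hartogs number is exactly $\kappa$ (or $\kappa^+$) but which still admits a surjection onto something strictly larger, by pairing $Y$ with a well-ordered set of size $\kappa$, e.g. considering $\kappa \times Y$ or injections/partial functions from $\kappa$-sized sets into $Y$. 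Concretely: if $\{A_i \mid i<\lambda\}$ has no choice function, consider $X = \prod$-like or $\bigcup$-like coded object; one shows $\aleph^*(X)$ is large (there is a surjection onto $\lambda$ at least, and one can push this up) while $\aleph(X)$ stays bounded because any injection from an ordinal into $X$ would, by well-ordering the ordinal, let one pick coherently from enough of the $A_i$ to contradict the failure of choice past some point.

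**Main obstacle.** The delicate part is the bookkeeping in the transfer construction: ensuring simultaneously that the constructed set $X$ genuinely has no injection from a large ordinal (so $\aleph(X)$ is controlled) while definitely having a surjection onto a strictly larger ordinal (so $\aleph^*(X)$ exceeds it). The asymmetry between injections and surjections is exactly what one is exploiting, but one must be careful that adjoining a well-ordered "scaffold" of size $\kappa$ to an eccentric set does not accidentally make it well-orderable or inflate its Hartogs number past the target. I expect the proof to first establish a lemma of the form "if there is an eccentric set at all, then for every sufficiently large cardinal $\kappa$ there is an eccentric set $X$ with $\aleph(X) \in \{\kappa, \kappa^+\}$", and then to observe that the failure of $\AC_\WO$ at $\lambda$ furnishes the initial eccentric set directly (the union $A = \bigcup_i(\{i\}\times A_i)$ has a surjection onto $\lambda$ but, were $\lambda \leq \abs{A}$, an injection $\lambda \to A$ would yield a choice function on a cofinal — hence after re-indexing, on all — of the $A_i$). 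The combination then contradicts $(\forall X)\,\aleph(X) = \aleph^*(X)$, completing the contrapositive.
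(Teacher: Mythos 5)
Your forward direction (the ``more cleanly'' version) is correct and is exactly what the paper does: a surjection $f\colon X\to\alpha$ gives a well-ordered family of disjoint non-empty fibres, and a choice function on them is an injection $\alpha\to X$. Your first two attempts at that direction are circular or muddled (you cannot use $\{\Inj{\gamma}{X}\mid\gamma<\kappa\}$ as a family of non-empty sets, since their non-emptiness is what you are trying to establish), but the final version stands.

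The converse is where the gap lies. Your contrapositive framing is fine and in the end equivalent to the paper's direct inductive proof, but the concrete candidate you offer does not work: given a family $\{A_i\mid i<\lambda\}$ with no choice function, the set $A=\bigcup_{i<\lambda}(\{i\}\times A_i)$ admits a surjection onto $\lambda$, yet an injection $\lambda\to A$ need not yield a choice function on a cofinal set of indices --- it could land entirely inside a single fibre $\{i_0\}\times A_{i_0}$ if $A_{i_0}$ is large, and even if it did hit cofinally many fibres, ``re-indexing'' does not recover a choice on the missed ones. The paper's construction avoids both problems at once by replacing the raw fibres with products of initial segments: one sets $Y_\alpha=\prod_{\gamma<\alpha}X_\gamma$ (so a single element of $Y_\alpha$ already chooses from all $X_\gamma$, $\gamma<\alpha$), pairs each $Y_\alpha$ with an escalating ordinal factor $\kappa_\alpha=\aleph\bigl(\bigcup_{\beta<\alpha}D_\beta\bigr)$ so that $D_\alpha=Y_\alpha\times\kappa_\alpha$, and takes $D=\bigcup_\alpha D_\alpha$. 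The choice of $\kappa_\alpha$ forces any injection $\lambda\to D$ (with $\lambda=\sup_\alpha\kappa_\alpha$) to escape every bounded union $\bigcup_{\beta<\alpha}D_\beta$, which is exactly what makes the ``cofinal'' selection genuinely cofinal; and the product structure of $Y_\alpha$ is what lets cofinally many selections patch into a total choice function. Note also that this requires $Y_\alpha\neq\emptyset$, which is why the paper runs the whole argument as a transfinite induction proving $\AC_{\aleph_\delta}$ from $\AC_{{<}\aleph_\delta}$; your sketch omits this inductive scaffolding, and it is not optional --- without it the products may be empty. In short: you have the right slogan (attach a well-ordered scaffold, extract choice from an injection), but you need the products-of-initial-segments trick together with the escalating $\kappa_\alpha$'s, neither of which your concrete proposal contains.
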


\begin{proof}
$(\implies)$. Let $X$ be a set. We always have that $\aleph(X)\leq\aleph^*(X)$, so it is sufficient to prove that $\aleph^*(X)\leq\aleph(X)$. Let $\lambda<\aleph^*(X)$, and let $f\colon X\to\lambda$ be a surjection. Since $f$ is a surjection, if we set $C=\{f^{-1}(\alpha)\mid\alpha<\lambda\}$ then $C$ is a well-ordered family of non-empty sets, and so by $\AC_\WO$, there is a choice function $c\colon\lambda\to X$. However, $c$ must be an injection since $f^{-1}(\alpha)\cap f^{-1}(\beta)=\emptyset$ whenever $\alpha\neq\beta$.

$(\impliedby)$. We shall prove $\AC_{\aleph_\delta}$ for all ordinals $\delta$ by induction. Suppose that we have established $\AC_{{<}\aleph_\delta}$ (indeed, this is a theorem of $\ZF$ for $\delta=0$), and let $X=\{X_\alpha\mid\alpha<\aleph_\delta\}$ where $X_\alpha\neq\emptyset$ for all $\alpha<\aleph_\delta$. By induction, for all $\alpha<\aleph_\delta$, $Y_\alpha\defeq\prod_{\gamma<\alpha}X_\gamma\neq\emptyset$. Define by induction on $\alpha<\aleph_\delta$ the cardinal $\kappa_\alpha$ and the set $D_\alpha$ in the following way:
\begin{equation*}
\kappa_\alpha\defeq\aleph\left(\bigcup\{D_\beta\mid\beta<\alpha\}\right)\text{ and }D_\alpha\defeq Y_\alpha\times\kappa_\alpha.
\end{equation*}
Let $D=\bigcup\{D_\alpha\mid\alpha<\aleph_\delta\}$ and $\lambda=\sup\{\kappa_\alpha\mid\alpha<\aleph_\delta\}$. By projection to its second co-ordinate, there is a surjection $D\to\lambda$, and so $\aleph^*(D)\geq\lambda^+$. By assumption, we must also have that $\aleph(D)\geq\lambda^+$. Let $f\colon\lambda\to D$ be an injection.

Since $\lambda>\aleph(\bigcup_{\beta<\alpha}D_\beta)$ for all $\alpha<\delta$, it cannot be the case that $f``\lambda\subseteq\bigcup_{\beta<\alpha}D_\beta$ for any $\alpha<\delta$. Therefore, by projection to its first co-ordinate, $f``\lambda$ gives a well-ordered set of partial choice functions for $X$ of unbounded domain. Setting ${c(\alpha)=\first(f(\gamma))(\alpha)}$, where \(\first\) is the projection $\tup{a,b}\mapsto a$ and $\gamma$ is minimal such that $\alpha\in\dom(\first(f(\gamma)))$, produces a choice function $c\in\prod X$ as desired.
\end{proof}

Inspired by this proof, we produce a general framework for taking a set $X$ and producing a set $D$ of larger Lindenbaum number with some control over the Hartogs and Lindenbaum numbers produced.

\begin{defn}
Let $\kappa$ be a cardinal, $\delta>0$ a limit ordinal, and $X=\{X_\alpha\mid\alpha<\delta\}$ be such that for all $\alpha<\delta$, $Y_\alpha\defeq\prod_{\beta<\alpha}X_\beta\neq\emptyset$. Inductively define the cardinals $\kappa_\alpha$ and sets $D_\alpha$ for $\alpha<\delta$ as follows:
\begin{equation*}
\kappa_\alpha\defeq\aleph\parenth{\midcup\{D_\beta\mid\beta<\alpha\}}+\kappa\text{ and }D_\alpha\defeq Y_\alpha\times\kappa_\alpha.
\end{equation*}
We then define the \emph{upwards transfer construction} $D=D(X,\kappa)$ as $\bigcup_{\alpha<\delta}D_\alpha$ and ${\lambda=\lambda(X,\kappa)}$ as ${\sup\{\kappa_\alpha\mid\alpha<\delta\}}$. We observe that $\lambda>\kappa$ and that $\lambda$ is a limit cardinal.
\end{defn}

\begin{prop}\label{prop:the-machine}
$\aleph(D)\geq\lambda$, $\aleph^*(D)\geq\lambda^+$, and if $\aleph(D)\geq\lambda^+$ then $\prod_{\alpha<\delta}X_\alpha$ is non-empty.
\end{prop}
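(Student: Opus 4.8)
The plan is to adapt the two parts of the proof of Theorem~\ref{thm:acwo-equiv-pelc} to the new, more flexible construction, paying attention to where the extra summand $\kappa$ and the arbitrary limit ordinal $\delta$ (in place of an initial ordinal $\aleph_\delta$) change the argument. The three conclusions are essentially independent, so I would treat them in turn.

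First, $\aleph^*(D)\geq\lambda^+$: projection to the second coordinate gives, for each $\alpha<\delta$, a surjection $D_\alpha\twoheadrightarrow\kappa_\alpha$, hence a surjection $D\twoheadrightarrow\kappa_\alpha$ for every $\alpha$, and since $Y_\alpha\neq\emptyset$ each $D_\alpha$ is non-empty so these genuinely cover all of $\lambda=\sup_\alpha\kappa_\alpha$; stitching them (or just noting $D\twoheadrightarrow\sup_\alpha\kappa_\alpha=\lambda$ directly, as $\lambda$ is a limit so a surjection onto each $\kappa_\alpha$ yields one onto $\lambda$) gives $\abs{\lambda}\leq^*\abs{D}$, i.e.\ $\aleph^*(D)\geq\lambda^+$. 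For $\aleph(D)\geq\lambda$: fix $\mu<\lambda$; then $\mu<\kappa_\alpha$ for some $\alpha$, and $\kappa_\alpha=\aleph(\midcup_{\beta<\alpha}D_\beta)+\kappa$, while $D_\alpha=Y_\alpha\times\kappa_\alpha\supseteq\{y\}\times\kappa_\alpha$ for any fixed $y\in Y_\alpha$, so there is an injection $\mu\hookrightarrow\kappa_\alpha\hookrightarrow D_\alpha\subseteq D$. Hence every $\mu<\lambda$ injects into $D$, giving $\aleph(D)\geq\lambda$.

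The substantive part is the third claim: if $\aleph(D)\geq\lambda^+$ then $\prod_{\alpha<\delta}X_\alpha\neq\emptyset$. Given an injection $f\colon\lambda\to D$, I would argue exactly as in Pelc's proof that $f``\lambda$ cannot be contained in any $\midcup_{\beta<\alpha}D_\beta$: if it were, then $\lambda$ would inject into $\midcup_{\beta<\alpha}D_\beta$, contradicting $\lambda>\kappa_\alpha>\aleph(\midcup_{\beta<\alpha}D_\beta)$ (here the $+\kappa$ only helps, since it makes $\kappa_\alpha$, and hence $\lambda$, no smaller). Therefore the set $\{\alpha<\delta\mid f``\lambda\cap D_\alpha\neq\emptyset\}$ is cofinal in $\delta$, so for every $\alpha<\delta$ there is some $\gamma<\lambda$ with $\first(f(\gamma))\in Y_{\alpha'}$ for some $\alpha'>\alpha$; since $Y_{\alpha'}=\prod_{\beta<\alpha'}X_\beta$, the function $\first(f(\gamma))$ is a partial choice function for $X$ with domain the ordinal $\alpha'$, which contains $\alpha$. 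Defining $c(\alpha)=\first(f(\gamma))(\alpha)$ with $\gamma<\lambda$ least such that $\alpha\in\dom(\first(f(\gamma)))$ yields a well-defined total function $c$ with $c(\alpha)\in X_\alpha$ for all $\alpha<\delta$, i.e.\ an element of $\prod_{\alpha<\delta}X_\alpha$.

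I expect the only real subtlety to be bookkeeping about what $D_\alpha$ and $Y_\alpha$ actually are — in particular checking that $\first(f(\gamma))$, a first coordinate of an element of some $D_\alpha=Y_\alpha\times\kappa_\alpha$, is literally a function whose domain is an ordinal $\alpha$ (so that ``$\alpha\in\dom$'' makes sense), and that the collection of these partial functions has domains cofinal in $\delta$; everything else is a routine transcription of the $(\impliedby)$ direction of Theorem~\ref{thm:acwo-equiv-pelc} with $\aleph_\delta$ replaced by $\lambda$ and the case-$\delta=0$ induction base replaced by the hypothesis $Y_\alpha\neq\emptyset$. No use of any choice principle is needed, which is the point of isolating the construction.
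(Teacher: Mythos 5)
Your proposal is correct and follows essentially the same argument as the paper's own proof: inject $\mu<\lambda$ via $\{y\}\times\kappa_\alpha\subseteq D_\alpha$, surject onto $\lambda$ via the second coordinate, and for the third claim show $f``\lambda$ cannot lie in any proper initial union $\bigcup_{\beta<\alpha}D_\beta$ and then stitch the first coordinates into a choice function. The one small slip is writing $\kappa_\alpha>\aleph(\bigcup_{\beta<\alpha}D_\beta)$ as a strict inequality (it can be an equality when $\kappa$ is small), but since you only need $\lambda>\aleph(\bigcup_{\beta<\alpha}D_\beta)$, which follows from $\lambda>\kappa_\alpha\geq\aleph(\bigcup_{\beta<\alpha}D_\beta)$, the argument is unaffected.
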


\begin{proof}
Let $\mu<\lambda$. Then there is $\alpha<\delta$ such that $\mu<\kappa_\alpha$. Hence by fixing $y\in Y_\alpha$, the function $\gamma\mapsto\tup{y,\gamma}$ is an injection $\mu\to D$. Therefore, $\aleph(D)\geq\lambda$. On the other hand, by projection to the second co-ordinate we have that $\aleph^*(D)\geq\lambda^+$.

Finally, suppose that $\aleph(D)\geq\lambda^+$, so there is an injection $f\colon\lambda\to D$. Note that since $\lambda>\aleph(\bigcup_{\beta<\alpha}D_\beta)$ for all $\alpha<\delta$, we cannot have that $f``\lambda\subseteq\bigcup_{\beta<\alpha}D_\beta$ for any $\alpha<\delta$. Hence, $f``\lambda$ intersects $D_\alpha$ for unboundedly many $\alpha$s and so, by projection to the first co-ordinate and the well-order of $f``\lambda$, we may select some $y_\alpha\in Y_\alpha$ for unboundedly many $\alpha$s. Putting these partial choice functions together yields $c\in\prod X$ as desired. Explicitly, $c(\alpha)=\first(f(\gamma))(\alpha)$, where $\gamma<\delta$ is minimal such that $\alpha\in\dom(\first(f(\gamma)))$.
\end{proof}

With upwards transfer construction in hand, we may produce a great many new statements that are all equivalent to $\AC_\WO$ through the general framework of Theorem~\ref{thm:acwo-equiv-pelc}.

\begin{thm}\label{thm:ac-wo-hartlin}
The following are equivalent:
\begin{enumerate}[label=\textup{(\arabic*)}]
\item\label{thm:ac-wo-hartlin;condition:hartislin} For all $X$, $\aleph(X)=\aleph^*(X)$;
\item\label{thm:ac-wo-hartlin;condition:hartislin-evLin} there is $\kappa$ such that for all $X$, $\aleph^*(X)\geq\kappa\implies\aleph(X)=\aleph^*(X)$;
\item\label{thm:ac-wo-hartlin;condition:hartislin-evHart} there is $\kappa$ such that for all $X$, $\aleph(X)\geq\kappa\implies\aleph(X)=\aleph^*(X)$;
\item\label{thm:ac-wo-hartlin;condition:acwo} $\AC_\WO$;
\item\label{thm:ac-wo-hartlin;condition:suc} for all $X$, $\aleph(X)$ is a successor; and
\item\label{thm:ac-wo-hartlin;condition:reg} for all $X$, $\aleph(X)$ is regular.
\end{enumerate}
\end{thm}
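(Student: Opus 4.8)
The plan is to establish a cycle of implications, using Theorem~\ref{thm:acwo-equiv-pelc} as the backbone so that most of the work is in linking the new conditions \ref{thm:ac-wo-hartlin;condition:hartislin-evLin}, \ref{thm:ac-wo-hartlin;condition:hartislin-evHart}, \ref{thm:ac-wo-hartlin;condition:suc}, and \ref{thm:ac-wo-hartlin;condition:reg} to conditions \ref{thm:ac-wo-hartlin;condition:hartislin} and \ref{thm:ac-wo-hartlin;condition:acwo}, which are already known to be equivalent. I would first dispatch the trivial implications: \ref{thm:ac-wo-hartlin;condition:hartislin} implies both \ref{thm:ac-wo-hartlin;condition:hartislin-evLin} and \ref{thm:ac-wo-hartlin;condition:hartislin-evHart} (take any $\kappa$, say $\kappa=\omega$); and \ref{thm:ac-wo-hartlin;condition:hartislin} implies \ref{thm:ac-wo-hartlin;condition:suc} and \ref{thm:ac-wo-hartlin;condition:reg} because for any $X$ we have $\aleph^*(X)$ is always a successor cardinal (if $g\colon X\to\lambda$ were a surjection with $\lambda$ limit then one gets a surjection onto $\lambda^+$... actually more directly: $\aleph^*(X)$ is never a limit, since a surjection onto a limit ordinal $\mu$ restricts to surjections onto all $\alpha<\mu$, hence onto $\sup$; the standard fact is $\aleph^*(X)$ is always a successor cardinal), hence if $\aleph(X)=\aleph^*(X)$ it too is a successor, and successor cardinals are regular. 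Then \ref{thm:ac-wo-hartlin;condition:acwo}$\iff$\ref{thm:ac-wo-hartlin;condition:hartislin} is exactly Theorem~\ref{thm:acwo-equiv-pelc}.

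The substantive direction is to show that each of the weak hypotheses \ref{thm:ac-wo-hartlin;condition:hartislin-evLin}, \ref{thm:ac-wo-hartlin;condition:hartislin-evHart}, \ref{thm:ac-wo-hartlin;condition:suc}, \ref{thm:ac-wo-hartlin;condition:reg} implies $\AC_\WO$, and here the upwards transfer construction of Proposition~\ref{prop:the-machine} is the engine. The idea, mirroring the $(\impliedby)$ direction of Theorem~\ref{thm:acwo-equiv-pelc}: suppose $\AC_\WO$ fails, so there is a least cardinal $\aleph_\delta$ and a family $X=\{X_\alpha\mid\alpha<\aleph_\delta\}$ of non-empty sets with no choice function, while $\AC_{<\aleph_\delta}$ holds, so all $Y_\alpha=\prod_{\beta<\alpha}X_\beta$ are non-empty. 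Given any prescribed cardinal $\kappa$, form $D=D(X,\kappa)$ and $\lambda=\lambda(X,\kappa)$. By Proposition~\ref{prop:the-machine}, $\aleph(D)\geq\lambda$, $\aleph^*(D)\geq\lambda^+$, and since $\prod X=\emptyset$ we must have $\aleph(D)<\lambda^+$, i.e. $\aleph(D)=\lambda$. Now $\lambda$ is a limit cardinal (as noted in the definition), and moreover $\lambda=\sup\{\kappa_\alpha\mid\alpha<\aleph_\delta\}$ is a supremum of $\aleph_\delta$-many cardinals, so $\cf(\lambda)\leq\aleph_\delta<\lambda$; hence $\lambda$ is singular. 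Thus $\aleph(D)=\lambda$ is a singular limit cardinal, contradicting both \ref{thm:ac-wo-hartlin;condition:suc} (it is not a successor) and \ref{thm:ac-wo-hartlin;condition:reg} (it is not regular). This gives \ref{thm:ac-wo-hartlin;condition:suc}$\implies$\ref{thm:ac-wo-hartlin;condition:acwo} and \ref{thm:ac-wo-hartlin;condition:reg}$\implies$\ref{thm:ac-wo-hartlin;condition:acwo} at once, without even needing to choose $\kappa$ cleverly ($\kappa=\omega$ suffices).

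For \ref{thm:ac-wo-hartlin;condition:hartislin-evHart}$\implies$\ref{thm:ac-wo-hartlin;condition:acwo}: given the witnessing $\kappa$, run the same construction with this $\kappa$ as the parameter. Then $\lambda>\kappa$, so $\aleph(D)=\lambda\geq\kappa$, whence by hypothesis $\aleph(D)=\aleph^*(D)$; but $\aleph(D)=\lambda$ while $\aleph^*(D)\geq\lambda^+$, a contradiction. For \ref{thm:ac-wo-hartlin;condition:hartislin-evLin}$\implies$\ref{thm:ac-wo-hartlin;condition:acwo}: again use this $\kappa$ in the construction; then $\aleph^*(D)\geq\lambda^+>\kappa$, so $\aleph(D)=\aleph^*(D)$, again contradicting $\aleph(D)=\lambda<\aleph^*(D)$. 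The one point needing care is the claim $\aleph(D)=\lambda$ rather than merely $\geq\lambda$: this is precisely where $\prod X=\emptyset$ is used, via the contrapositive of the last clause of Proposition~\ref{prop:the-machine}, so I would state that step explicitly. The main obstacle, such as it is, is bookkeeping: making sure the minimal-counterexample setup ($\aleph_\delta$ least failure of $\AC_\lambda$, so that $\AC_{<\aleph_\delta}$ secures $Y_\alpha\neq\emptyset$ and the construction is well-defined) is correctly invoked and that the singularity computation $\cf(\lambda)\leq\aleph_\delta<\lambda$ is justified from $\lambda$ being a limit cardinal above $\aleph_\delta$ expressed as a sup of length $\aleph_\delta$. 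Everything else is a short diagram chase through Theorem~\ref{thm:acwo-equiv-pelc} and Proposition~\ref{prop:the-machine}.
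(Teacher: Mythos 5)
Your overall strategy — close the cycle through Theorem~\ref{thm:acwo-equiv-pelc} and drive the $(\cdot)\implies\mathrm{(4)}$ directions with the upwards transfer construction and the contrapositive of Proposition~\ref{prop:the-machine} — is exactly the paper's strategy, and your treatment of $\mathrm{(2)},\mathrm{(3)}\implies\mathrm{(4)}$ is correct. However, there are three genuine gaps in the other directions.

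First, the claim that $\aleph^*(X)$ is always a successor cardinal in $\ZF$, which you use for $\mathrm{(1)}\implies\mathrm{(5)}$, is false. Having surjections $X\to\alpha$ for every $\alpha$ below a limit cardinal $\mu$ does \emph{not} yield a surjection $X\to\mu$ without a choice principle to pick the surjections uniformly, so $\aleph^*(X)$ can indeed be a limit cardinal; the paper itself treats ``for all $X$, $\aleph^*(X)$ is a successor'' as a nontrivial statement, proving it only inside $\SVC$ models with additional hypotheses (Lemma~\ref{lem:svc-hartlin-suc}) and noting in the remark after the introduction's Theorem that it is strictly weaker than $\AC_\WO$. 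Second, the assertion that ``successor cardinals are regular,'' which you use to pass from $\mathrm{(5)}$ to $\mathrm{(6)}$, is also not a theorem of $\ZF$: $\aleph_1$ is singular in the Feferman--L\'evy model, for instance. The paper avoids both of these pitfalls by proving $\mathrm{(4)}\implies\mathrm{(5)}$ and $\mathrm{(4)}\implies\mathrm{(6)}$ directly from $\AC_\WO$ (simultaneously choosing injections $\Inj{\mu}{X}$ for all $\mu<\lambda$, and invoking $\AC_\WO$ to get regularity of successors); since $\mathrm{(1)}\iff\mathrm{(4)}$ is already in hand via Theorem~\ref{thm:acwo-equiv-pelc}, this closes the cycle without any purely-$\ZF$ claim about $\aleph^*$ or successors.

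Third, in $\mathrm{(6)}\implies\mathrm{(4)}$ your claim that ``$\kappa=\omega$ suffices'' is not justified. You need $\lambda>\aleph_\delta$ to conclude $\cf(\lambda)\leq\aleph_\delta<\lambda$, but $D(X,\omega)$ only gives you $\lambda>\omega$. If $\aleph_\delta$ is a fixed point of the $\aleph$-function (in particular weakly inaccessible), the sequence $\kappa_\alpha$ could conceivably increase at exactly the $\aleph$-rate, giving $\lambda=\aleph_{\aleph_\delta}=\aleph_\delta$, in which case $\lambda$ would be regular and your contradiction evaporates. The paper handles this by feeding $\kappa+\aleph_\delta$ into the construction, so $\lambda>\kappa+\aleph_\delta\geq\aleph_\delta$ is automatic from the observation $\lambda(X,\kappa)>\kappa$ in the definition. (For $\mathrm{(5)}\implies\mathrm{(4)}$ your $\kappa=\omega$ is fine, since there you only need $\lambda$ to be a limit cardinal.)
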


\begin{proof}
(${\ref*{thm:ac-wo-hartlin;condition:hartislin}\implies\ref*{thm:ac-wo-hartlin;condition:hartislin-evLin}}$) Immediate.

(${\ref*{thm:ac-wo-hartlin;condition:hartislin-evLin}\implies\ref*{thm:ac-wo-hartlin;condition:hartislin-evHart}}$) Immediate from $\aleph^*(X)\geq\aleph(X)$ for all $X$.

(${\ref*{thm:ac-wo-hartlin;condition:acwo}\implies\ref*{thm:ac-wo-hartlin;condition:hartislin}}$) See the proof of Theorem~\ref{thm:acwo-equiv-pelc}.

(${\ref*{thm:ac-wo-hartlin;condition:acwo}\implies\ref*{thm:ac-wo-hartlin;condition:suc}}$) Let $X$ be such that $\aleph(X)\geq\lambda$ for a limit cardinal $\lambda$. For all ${\mu<\lambda}$, recall that $\Inj{\mu}{X}$ denotes the set of injections $\mu\to X$, and use $\AC_\WO$ to find ${F\in\prod_{\mu<\lambda}\Inj{\mu}{X}}$. Define the sequence $\tup{x_\alpha\mid\alpha<\eta}$ by concatenating the sequences $\tup{F(\mu)(\beta)\mid\beta<\mu}$ and removing duplicate entries. This must be an injection $\eta\to X$ for some ordinal $\eta$, and we claim that $\eta\geq\lambda$, proving that $\aleph(X)\geq\lambda^+$ as required. If instead $\eta<\lambda$, then letting $\eta<\mu<\lambda$ we find that $F(\mu)``\mu\subseteq\{x_\alpha\mid\alpha<\eta\}$, so $F(\mu)$ is not an injection, a contradiction. Hence $\eta\geq\lambda$ as required.

(${\ref*{thm:ac-wo-hartlin;condition:acwo}\implies\ref*{thm:ac-wo-hartlin;condition:reg}}$) By $\AC_\WO$, all successor cardinals are regular.\footnote{If $\alpha<\kappa^+$ and $f\colon\alpha\to\kappa^+$ is a strictly increasing sequence, then by $\AC_\WO$ we may simultaneously pick injections $f(\alpha)\to\kappa$ for all $\alpha$, and so $\abs{\midcup f``\alpha}\leq\kappa\times\kappa=\kappa$.} Also by Condition~\ref*{thm:ac-wo-hartlin;condition:acwo}, we deduce Condition~\ref*{thm:ac-wo-hartlin;condition:suc}, so $\aleph(X)$ is a successor for all $X$. Therefore, $\aleph(X)$ is regular for all $X$.

(Each of (3), (5), and (6) implies (4)) Since the proofs of these three implications are almost identical, they have been packaged here.

We shall prove $\AC_{\aleph_\delta}$ by induction on $\delta$. Suppose that we have $\AC_{{<}\aleph_\delta}$ (which is a theorem of $\ZF$ in the case of $\delta=0$), and let $X=\{X_\alpha\mid\alpha<\aleph_\delta\}$ where $X_\alpha\neq\emptyset$ for all $\alpha<\aleph_\delta$. By induction, $\prod_{\beta<\alpha}X_\beta\neq\emptyset$ for all $\alpha<\aleph_\delta$, so setting $D=D(X,\kappa+\aleph_\delta)$ and $\lambda=\lambda(X,\kappa+\aleph_\delta)$ we get that $\aleph(D)\geq\lambda>\kappa+\aleph_\delta$. For each of the Conditions~\ref*{thm:ac-wo-hartlin;condition:hartislin-evHart}, \ref*{thm:ac-wo-hartlin;condition:suc}, and \ref*{thm:ac-wo-hartlin;condition:reg} we may use Proposition~\ref{prop:the-machine} to prove that $\prod X\neq\emptyset$ by showing that $\aleph(D)\geq\lambda^+$.

First assume Condition~\ref*{thm:ac-wo-hartlin;condition:hartislin-evHart}. Then $\aleph(D)\geq\kappa$, so $\aleph(D)=\aleph^*(D)\geq\lambda^+$, and thus $\prod X\neq\emptyset$.

If we instead assume Condition~\ref*{thm:ac-wo-hartlin;condition:suc}, then since $\lambda$ is a limit cardinal we have ${\aleph(D)\geq\lambda^+}$, and thus $\prod X\neq\emptyset$.

Finally, if we assume Condition~\ref*{thm:ac-wo-hartlin;condition:reg}, then since $\lambda=\sup\{\kappa_\alpha\mid\alpha<\aleph_\delta\}$ and $\lambda>\aleph_\delta$, we get that $\lambda$ is a singular cardinal, so $\aleph(D)\geq\lambda^+$, and thus $\prod X\neq\emptyset$.

Hence, in each case, we can conclude Condition~\ref*{thm:ac-wo-hartlin;condition:acwo}.
\end{proof}

\begin{rk}
Note that, by combining the techniques exhibited in the proof of Theorem~\ref{thm:ac-wo-hartlin}, one can produce a vast collection of conditions that are equivalent to $\AC_\WO$. For example, $\AC_\WO$ is equivalent to the statement ``there is $\kappa$ such that for all $X$, if $\aleph^*(X)\geq\kappa$ then either $\aleph(X)$ is a successor or $\aleph(X)$ is regular''.
\end{rk}

\section{Spectra}\label{s:spectra}

Theorem~\ref{thm:ac-wo-hartlin} gives the very strong conclusion that $\AC_\WO$ is not just equivalent to $(\forall X)\aleph(X)=\aleph^*(X)$, but also that $\aleph(X)$ is a successor cardinal for all $X$. In this way, $\AC_\WO$ gives us that the class of all pairs $\tup{\aleph(X),\aleph^*(X)}$ is minimal, that is just $\{\tup{\lambda^+,\lambda^+}\mid\lambda\in\Card\}$. However, it is possible to violate $\AC_\WO$, and we inspect here the various ways in which this can be violated in models of $\SVC$.

\begin{defn}[Hartogs--Lindenbaum Spectrum]
For a model $M$ of $\ZF$, the \emph{(Hartogs--Lindenbaum) spectrum} of $M$, denoted by $\Spec_\aleph(M)$, is the class
\begin{equation*}
\Spec_\aleph(M)\defeq\{\tup{\lambda,\kappa}\mid M\vDash(\exists X)\aleph(X)=\lambda,\aleph^*(X)=\kappa\}
\end{equation*}
of all possible pairs $\tup{\lambda,\kappa}$ such that there is $X\in M$ with $\aleph(X)=\lambda$ and $\aleph^*(X)=\kappa$.
\end{defn}

In \cite{karagila_hartogs_2023_draft}, the authors show that it is consistent with $\ZF$ to have a model $M$ such that
\begin{equation*}
\Spec_\aleph(M)=\{\tup{\lambda,\kappa}\mid\aleph_0\leq\lambda\leq\kappa\}\cup\{\tup{n+1,n+1}\mid n<\omega\}.
\end{equation*}
However, this was achieved with a class-length iteration of symmetric extensions and (by the \ref{thm:main-thm}) cannot be optimised further. Recall that $M\vDash\SVC$ if $M$ is a symmetric extension of a ground model $V\vDash\AC$. We shall show that in this case $\Spec_\aleph(M)$ is controlled on a tail of cardinals. In particular, there is $\Omega\in\Card$ such that for all $\lambda\geq\Omega$, if $\aleph(X)=\lambda$ then $\aleph^*(X)\leq\lambda^+$.

\subsection{The spectrum of Cohen's first model}\label{s:spetra;ss:cohens}

Let us first establish the methods that will be employed in our favourite test model of $\ZF+\SVC+\lnot\AC$: Cohen's first model. In fact, Cohen's first model is not even a model of $\AC_\omega$, and so in particular is not a model of $\AC_\WO$. This will be a consequence of the existence of a set $A$ such that $\aleph(A)=\aleph_0$; by \cite[Section~2.4.1]{jech_axiom_1973}, if $\AC_{\aleph_0}$ holds then every infinite set has a countably infinite subset.

We begin with a quick reminder of how Cohen's first model is defined. Let $V$ be a model of $\ZFC$ and, working in $V$, let $\bbP=\Add(\omega,\omega)$, that is the forcing whose conditions are finite partial functions $p\colon\omega\times\omega\to2$ with the ordering $q\leq p$ if $q\supseteq p$. The group $\sG$ is the finitary permutations of $\omega$, those $\pi\in S_\omega$ such that $\{n<\omega\mid\pi n\neq n\}$ is finite. This has group action on $\bbP$ given by $\pi p(\pi n,m)=p(n,m)$. For $E\in[\omega]^\lomega$, let $\fix(E)\leq\sG$ be the subgroup $\{\pi\in\sG\mid\pi\res E=\id\}$, and let $\sF$ be the filter of subgroups of $\sG$ generated by the $\fix(E)$ as $E$ varies over $[\omega]^\lomega$. Let $M$ be the symmetric extension of $V$ by this symmetric system, and let $V[G]$ be the full extension by $\bbP$. Note that, since $\bbP$ is c.c.c., $V$, $M$, and $V[G]$ will agree on the cardinalities and cofinalities of ordinals, and $V$ and $V[G]$ will agree on the cardinalities of all sets in $V$.

For each $n<\omega$, let $\dda_n$ be the $\bbP$-name $\{\tup{p,\check{m}}\mid p(n,m)=1\}$ and note that for all $\pi\in\sG$, $\pi\dda_n=\dda_{\pi n}$. Let $\ddA=\{\dda_n\mid n<\omega\}^\bullet$, so $\pi\ddA=\ddA$ for all $\pi\in\sG$. Let $A$ be the realisation of the name $\ddA$ in $M$. Note that $V[G]\vDash\abs{A}=\aleph_0$ witnessed by, say, $\{\tup{\check{n},\dda_n}^\bullet\mid n<\omega\}^\bullet$.

\begin{fact}[{\cite[$\S5.5$]{jech_axiom_1973}}]\label{fact:cohen-model}
The following hold in $M$:
\begin{enumerate}[label=\textup{(\arabic*)}]
\item $\aleph(A)=\aleph_0$;
\item for every infinite $X$ there is a surjection $f\colon X\to\omega$; and
\item for every $X$ there is an ordinal $\eta$ and an injection $f\colon X\to[A]^\lomega\times\eta$.
\end{enumerate}
\end{fact}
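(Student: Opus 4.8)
The three claims are now-standard facts about the basic Cohen model, and I would prove them in the order (1), (3), (2), since (2) is most cheaply extracted from (3). For (1), the inequality $\aleph(A)\geq\aleph_0$ is immediate: for each $n$ the name $\{\tup{\check k,\dda_k}\mid k<n\}^\bullet$ is hereditarily $\sF$-symmetric, its stabiliser containing $\fix(\{0,\dots,n-1\})$, and realises an injection $n\to A$. For $\aleph(A)\leq\aleph_0$ I would run the usual ``swap in a fresh Cohen real'' argument: suppose $\dot f\in\HS$ and $p\forces^{\HS}``\dot f\colon\check\omega\to\ddA$ is injective''; fix a finite $E$ with $\fix(E)\subseteq\sym(\dot f)$, and, using that the range of $\dot f$ is forced infinite, find $k<\omega$, $q\leq p$ and $m\notin E$ with $q\forces^{\HS}\dot f(\check k)=\dda_m$. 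Choosing $m'\notin E\cup\{m\}$ such that no pair $\tup{m',j}$ lies in $\dom(q)$ and putting $\pi=(m\ m')$, the Symmetry Lemma gives $\pi q\forces^{\HS}\dot f(\check k)=\dda_{m'}$, while $q$ and $\pi q$ are compatible because $\pi$ moves only the rows $m,m'$ and $q$ says nothing about row $m'$; a common extension then forces $\dda_m=\dda_{m'}$, which is impossible.

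For (3) — which is exactly the statement that $M\vDash\SVC$ with seed $[A]^{\lomega}$, a special case of Fact~\ref{fact:svc-tfae} — the plan is to attach to each $x\in X$ a canonically chosen hereditarily symmetric name and read off two pieces of data. Fix $\dot X\in\HS$ with $\dot X^G=X$ and finite support $E_0$. In the basic Cohen model every hereditarily symmetric name has a least support, since the family of its supports is closed under intersection: any finitary permutation fixing $F\cap F'$ pointwise is a product of permutations each fixing $F$ pointwise or fixing $F'$ pointwise. So, fixing a well-ordering ${\lhd}$ of ground-model names, assign to $x$ the least support $E_x$ of the ${\lhd}$-least first-level name of $\dot X$ that realises $x$; let the first coordinate of $f(x)$ be the finite set $\{\dda_m^G\mid m\in E_x\}\in[A]^{\lomega}$, and the second an ordinal coding that name together with enough information to recover the \emph{tuple} $\langle\dda_m^G\mid m\in E_x\rangle$ from its underlying set, read off from the reals' bits (this is what separates, e.g., $\langle\dda_0^G,\dda_1^G\rangle$ from $\langle\dda_1^G,\dda_0^G\rangle$). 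The delicate point, and the crux of the whole Fact, is to arrange this coding so that it is fixed by $\fix(E')$ for some finite $E'$, hence so that the name $\dot f$ of $f$ is hereditarily $\sF$-symmetric and $f\in M$; injectivity is then routine, since equality of the $[A]^{\lomega}$-coordinates of $f(x)$ and $f(y)$ forces $E_x=E_y$ (the Cohen reals being pairwise distinct), whence the codes agree and $x$ is recovered.

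For (2), given infinite $X\in M$ I would apply (3) and replace $X$ by its image to assume $X\subseteq[A]^{\lomega}\times\eta$. The projection of $X$ to $\eta$ is a well-orderable set of ordinals; if it is infinite it surjects onto $\omega$ and we compose with the projection, so assume it is finite and reduce to $X\subseteq[A]^{\lomega}$. Then $s\mapsto\abs{s}$ maps $X$ onto a subset of $\omega$, so if that subset is infinite we are done; otherwise one of the finitely many sets $X\cap[A]^n$ is infinite and a surjection from it onto $\omega$ extends to $X$, so assume $X\subseteq[A]^n$ for a fixed $n$. This case I would handle by induction on $n$ via a ``first difference'' analysis: repeatedly pass to an infinite subset on which the least coordinate carrying a $1$ among the members of a set is constant, strip that common initial segment, and split by which members carry the $1$; each round either exhibits a surjection onto $\omega$ outright (when infinitely many values persist) or produces an infinite subset of a product $[A]^{n_0}\times[A]^{n_1}$ with $n_0,n_1<n$ to which the induction applies, and recursing forever would force all members of $X$ to share an infinite prefix, leaving at most one element — in which case the stage at which a given element first drops out of the nested family furnishes the surjection.

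Thus the genuine work lies entirely in (3): verifying that the seed-and-coding construction really lives inside $M$, i.e.\ that $\dot f$ is hereditarily $\sF$-symmetric, which is what forces the code of each $x$ to depend on only finitely many of the Cohen reals while still pinning $x$ down. Parts (1) and (2) are then routine manipulations within the established framework.
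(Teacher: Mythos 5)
The paper gives no proof of this Fact --- it is cited verbatim to Jech's \emph{Axiom of Choice}, \S5.5 --- so you are filling in an argument the author deliberately omitted; there is no ``paper proof'' to diverge from. Your treatment of (1) is the standard argument and is fine. You are also right that the whole weight falls on (3), and that the delicacy is precisely in making the map $x\mapsto f(x)$ hereditarily symmetric; but you only name this as ``the crux'' without resolving it, and as sketched the resolution is genuinely not automatic. Concretely: if you take the $\lhd$-least name $\dot x$ with least support $E_x$ and let the ordinal coordinate code $(\dot x,\sigma_x)$, where $\sigma_x$ is the permutation matching the $\omega$-order of $E_x$ to the $<_{\mathrm{lex}}$-order of $\{\dda_m^G\mid m\in E_x\}$, then a $\pi\in\fix(E')$ that is not order-preserving on $E_x$ sends $\dot x\mapsto\pi\dot x$ \emph{and} $\sigma_x\mapsto\tau_\pi\circ\sigma_x$, so the na\"ive code is not fixed and $\dot f$ need not lie in $\HS$. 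One must choose the code to be an invariant of the $\tau$-orbit of $(\dot x,\sigma_x)$ --- equivalently, code the name ``re-indexed along the lexicographic order''. It is cleaner (and I believe closer to Jech) to first exhibit an injection $X\to A^{\lomega}\times\eta$, which \emph{is} readily symmetric because permutations act coherently on ordered tuples, and only then convert $A^{\lomega}\hookrightarrow[A]^{\lomega}\times\omega$ using the $M$-definable lexicographic linear order on $A$. Without some such fix your (3) has a real gap, albeit one you flagged yourself.

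Two smaller points. First, your derivation of (2) from (3) reduces correctly to an infinite $X\subseteq[A]^n$, but your induction step, ``split by which members carry the $1$,'' can fail to shrink $n$: if every member of $s$ has a $1$ at the first such coordinate, you get $s_1=s$ and $n_1=n$. Splitting at the first \emph{branching} coordinate --- the least $k$ with $\abs{\{b\res k\mid b\in s\}}\geq2$ --- guarantees $n_0,n_1\geq1$ and hence $n_0,n_1<n$, after which the first-coordinate $k(s)$, if unbounded, gives the surjection directly, and otherwise a pigeonhole step lands you in $[A]^{n_0}\times[A]^{n_1}$ as you want. Second, (3) is not literally a ``special case'' of Fact~\ref{fact:svc-tfae}: that fact yields \emph{some} injective seed from the symmetric-extension presentation, but does not identify it as $[A]^{\lomega}$; pinning down the seed is exactly what requires the support analysis above.
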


An immediate corollary of this fact is the following.

\begin{cor}\label{cor:cohen;linden-A-is-aleph1}
In $M$, $\aleph^*(A)=\aleph_1$.
\end{cor}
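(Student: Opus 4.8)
The plan is to show $\aleph^*(A)=\aleph_1$ by establishing the two inequalities $\aleph^*(A)\geq\aleph_1$ and $\aleph^*(A)\leq\aleph_1$. For the first, it suffices to exhibit a surjection $A\to\omega$ in $M$, since then $\abs{\omega}\leq^*\abs{A}$ and hence $\aleph^*(A)>\omega$, so $\aleph^*(A)\geq\aleph_1$; but $A$ is infinite (indeed $V[G]\vDash\abs{A}=\aleph_0$, so $A$ is Dedekind-infinite from the external point of view, and in particular infinite in $M$), so this is immediate from Fact~\ref{fact:cohen-model}(2). Alternatively one notes $\aleph^*(A)\geq\aleph(A)^+$ is false in general, so we genuinely use the surjection.

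For the upper bound $\aleph^*(A)\leq\aleph_1$, I would argue by contradiction: suppose there were a surjection $f\colon A\to\omega_1$ in $M$. The key obstacle, and the heart of the argument, is to rule this out using the symmetry/finite-support structure of the model together with the c.c.c.-ness of $\bbP$. The strategy is the standard one for Cohen's first model: let $\dot f\in\HS$ be a name for $f$ with $\sym(\dot f)\supseteq\fix(E)$ for some finite $E\in[\omega]^{<\omega}$, and fix a condition $p$ forcing that $\dot f$ is a surjection $\ddA\to\check\omega_1$. For each $\xi<\omega_1$, pick (in $V$) a condition $p_\xi\leq p$ and an $n_\xi<\omega$ with $p_\xi\forces^\HS\dot f(\dda_{n_\xi})=\check\xi$. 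Since $\bbP$ is c.c.c.\ and $\omega_1$ is regular, there is an uncountable $S\subseteq\omega_1$ and a single $n^*<\omega$ with $n_\xi=n^*$ for all $\xi\in S$ — indeed we can even thin out so that the $p_\xi$ for $\xi\in S$ all have the same restriction to the finite "coordinate block" that matters. Now take two distinct $\xi,\eta\in S$ with $p_\xi\comp p_\eta$ (possible by c.c.c., or just by a $\Delta$-system argument), pick $m\notin E$ with $m\neq n^*$ not appearing in the relevant finite domains, and let $\pi$ be the transposition $(n^*\ m)$ composed appropriately so that $\pi\in\fix(E)$ and $\pi$ maps $p_\xi$ to a condition compatible with $p_\eta$. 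Applying the Symmetry Lemma to $\pi$: since $\pi\dot f=\dot f$ and $\pi\dda_{n^*}=\dda_m$, we get $\pi p_\xi\forces^\HS\dot f(\dda_m)=\check\xi$, while $p_\eta\forces^\HS\dot f(\dda_{n^*})=\check\eta$. Arrange $\pi$ and the choices so that $\pi p_\xi$ and $p_\eta$ are compatible and their common extension decides $\dot f(\dda_{n^*})$ both ways or forces $\dot f(\dda_m)=\check\xi$ alongside $\dot f(\dda_{n^*})=\check\eta$ with $n^*$ playing both roles after a further swap — the contradiction being that a single condition forces two distinct values of $\dot f$ at a single name. (The clean way to package this: show any two distinct $\dda_n,\dda_{n'}$ with $n,n'\notin E$ are "swapped" by an automorphism fixing $\dot f$, so $f$ must be constant on $\{a_n\mid n\notin E\}$, whence the range of $f$ is finite-to-one off a finite set and so countable — contradicting that it is $\omega_1$.)

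The cleanest route, which I would actually write, is this last parenthetical: by normality of $\sF$ and the choice of $E$, for any $n,n'\in\omega\setminus E$ the transposition $(n\ n')$ lies in $\fix(E)\subseteq\sym(\dot f)$ and in $\sym(\ddA)$, and it fixes $p$ if we also assume (WLOG, by extending) that $p$'s domain is disjoint from the rows $n,n'$ — more carefully, for a fixed condition we only get finitely many "active" rows, so: $f\res\{a_n\mid n\in\omega\setminus E'\}$ is constant for some finite $E'\supseteq E$, by a genericity argument showing the forced value of $\dot f(\dda_n)$ cannot depend on $n$ for $n$ outside a finite set determined by a name for that value. Hence $\operatorname{ran}(f)\subseteq f``\{a_n\mid n\in E'\}\cup\{\text{one value}\}$ is finite, let alone countable, contradicting $\operatorname{ran}(f)=\omega_1$. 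Therefore no surjection $A\to\omega_1$ exists, so $\aleph^*(A)\leq\aleph_1$, and combined with the lower bound, $\aleph^*(A)=\aleph_1$. The main obstacle is making the "value cannot depend on $n$" step fully rigorous: one must take a name $\dot\xi$ for $\dot f(\dda_n)$ that is itself hereditarily symmetric and check that applying a transposition supported away from $E$ and from a finite set of rows fixes $\dot\xi$ while moving $\dda_n$, which is exactly where the finite-support structure of $\sF$ does the work.
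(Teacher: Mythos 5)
Your lower bound is exactly the paper's: Fact~\ref{fact:cohen-model}(2) gives a surjection $A\to\omega$, so $\aleph^*(A)\geq\aleph_1$. But for the upper bound your approach is both far heavier than needed and, in the ``cleanest route'' form you say you would actually write, incorrect. The paper simply observes that $M\subseteq V[G]$, that $V[G]\vDash\abs{A}=\aleph_0$, and that $\bbP$ is c.c.c.\ so $\aleph_1^M=\aleph_1^{V[G]}$. Any surjection $f\colon A\to\aleph_1^M$ in $M$ would also lie in $V[G]$ and would there be a surjection from a countable set onto $\aleph_1^{V[G]}$, which is absurd in a model of $\ZFC$. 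No symmetry argument is required at all; the name $\{\tup{\check{n},\dda_n}^\bullet\mid n<\omega\}^\bullet$ the paper exhibits makes $\abs{A}^{V[G]}=\aleph_0$ explicit, and that is the whole content of the upper bound.

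The symmetry argument you sketch has a genuine gap. You claim that, for a hereditarily symmetric name $\dot f$ with $\fix(E)\subseteq\sym(\dot f)$, one can enlarge $E$ to a finite $E'$ so that $f\res\{a_n\mid n\in\omega\setminus E'\}$ is constant, and hence $\operatorname{ran}(f)$ is finite. If that were true it would apply to \emph{every} function $f\colon A\to\Ord$ in $M$, and in particular would rule out a surjection $A\to\omega$ — directly contradicting Fact~\ref{fact:cohen-model}(2), which you yourself invoke for the lower bound. The error is in treating ``the forced value of $\dot f(\dda_n)$'' as depending only on a single condition $p$ and the single finite set $E'$: different $\xi$ are decided by different conditions $p_\xi$ with different finite domains, and the transposition $(n\ n')$ fixing $\dot f$ need not fix those $p_\xi$. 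What the finite-support symmetry argument actually yields in Cohen's model is that $A$ has no countably infinite \emph{well-orderable} subset (i.e.\ $\aleph(A)=\aleph_0$, Fact~\ref{fact:cohen-model}(1)), not that functions out of $A$ are essentially constant. Your first, $\Delta$-system version also stops short of a contradiction: after thinning to a single $n^*$ and compatible $p_\xi,p_\eta$, you never actually reach ``one condition forces two values of $\dot f$ at the same point,'' because the compatibility you arrange is with $\pi p_\xi$ which forces a statement about $\dda_m$, not $\dda_{n^*}$. The fix is not to repair this machinery but to drop it: the countability of $A$ in the outer $\ZFC$ model, plus c.c.c., already closes the door on any surjection onto $\aleph_1$.
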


\begin{proof}
By Statement (2) in Fact~\ref{fact:cohen-model}, there is a surjection $A\to\omega$ in $M$, and so $\aleph^*(A)\geq\aleph_1$. However, in the outer model, $\abs{A}=\aleph_0$, and so certainly there is no surjection $A\to\aleph_1$ in $M$. Hence, $\aleph^*(A)=\aleph_1$.
\end{proof}

We shall use these facts alongside the techniques laid out in Section~\ref{s:ac-wo} to produce a complete picture of the spectrum of Cohen's first model.

\begin{thm}\label{thm:cohen-spectrum}
$\Spec_\aleph(M)=\{\tup{\lambda^+,\lambda^+}\mid\lambda\in\Card\}\cup\{\tup{\lambda,\lambda^+}\mid\cf(\lambda)=\aleph_0\}$.
\end{thm}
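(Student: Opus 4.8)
The plan is to prove the two inclusions of $\Spec_\aleph(M)=\{\tup{\lambda^+,\lambda^+}\mid\lambda\in\Card\}\cup\{\tup{\lambda,\lambda^+}\mid\cf(\lambda)=\aleph_0\}$ separately, with the harder direction being that every pair in the spectrum has one of these two forms.

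For the $\supseteq$ direction, the pairs $\tup{\lambda^+,\lambda^+}$ are in the spectrum of any model of $\ZF$, witnessed by $X=\lambda$. For the pairs $\tup{\lambda,\lambda^+}$ with $\cf(\lambda)=\aleph_0$, I would apply the upwards transfer construction of Section~\ref{s:ac-wo}. Fix $\lambda$ with $\cf(\lambda)=\aleph_0$ and choose a strictly increasing sequence of cardinals $\tup{\lambda_n\mid n<\omega}$ cofinal in $\lambda$. Take $X=\{X_n\mid n<\omega\}$ with, say, $X_n=A$ for all $n$ (or more carefully, so that the partial products $Y_n=\prod_{m<n}X_m$ are non-empty, which holds here as each $Y_n$ is a finite product and $A\neq\emptyset$), but with the sizes arranged so that $\lambda(X,\kappa)=\lambda$; concretely one wants the construction's $\kappa_n$ to be cofinal in $\lambda$, which can be forced by inflating the seed parameter $\kappa$ appropriately or by padding the $X_n$. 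Since $\prod_{n<\omega}X_n=A^\omega$ has no choice function in $M$ (as $\aleph(A)=\aleph_0$ means $A$ has no countably infinite, hence no choice-function-admitting, well-ordered family structure — more directly, a choice function on $\{A\times\{n\}\mid n<\omega\}$ would well-order a countable subset of $A$), Proposition~\ref{prop:the-machine} gives $\aleph(D)=\lambda$ exactly (it is $\geq\lambda$ and not $\geq\lambda^+$) while $\aleph^*(D)\geq\lambda^+$; one then checks $\aleph^*(D)\leq\lambda^+$ using the injective seed, as in the next paragraph. This realises $\tup{\lambda,\lambda^+}$.

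For the $\subseteq$ direction, let $X\in M$ and write $\lambda=\aleph(X)$, $\kappa=\aleph^*(X)$. By Statement (3) of Fact~\ref{fact:cohen-model}, there is an ordinal $\eta$ and an injection $X\hookrightarrow[A]^\lomega\times\eta$; since $[A]^\lomega$ is a countable union of sets $[A]^n$, and each $[A]^n$ injects into $A^n$ which (in the outer model $V[G]$, where $\abs A=\aleph_0$) is countable, we get that in $V[G]$ the set $X$ is well-orderable with $\abs X^{V[G]}\leq\max\{\aleph_0,\abs\eta\}$. Hence $\aleph(X)^M\leq\aleph^*(X)^M\leq(\abs X^{V[G]})^+$, giving an absolute upper bound on $\kappa$ in terms of data in $V[G]$; in particular $\kappa\leq\lambda^+$ always, since any surjection $X\to\alpha$ in $M$ is also in $V[G]$, so $\alpha<(\abs X^{V[G]})^+$ and $\abs X^{V[G]}<\lambda$ forces $\alpha<\lambda^+$, i.e. $\kappa\leq\lambda^+$. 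So the only possibilities are $\kappa=\lambda$ or $\kappa=\lambda^+$. If $\kappa=\lambda$, then $\aleph(X)=\aleph^*(X)$; I would then argue $\lambda$ must be a successor: if $\lambda$ were a limit cardinal, apply the upwards transfer construction using $\AC_{<\aleph_0}$ (trivially available) to partial products of copies of $X$ — but more cleanly, note that $\aleph(X)=\aleph^*(X)$ for this particular $X$ together with $X$ being well-orderable in $V[G]$ forces $\lambda=(\abs X^{V[G]})^+$, a successor. Thus $\tup{\lambda,\kappa}=\tup{\lambda^+,\lambda^+}$ is of the first form, after reindexing. If instead $\kappa=\lambda^+$, I must show $\cf(\lambda)=\aleph_0$. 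Suppose $\cf(\lambda)>\aleph_0$; take a surjection $f\colon X\to\lambda$ (possible as $\kappa=\lambda^+>\lambda$). For each $\mu<\lambda$ consider $f^{-1}[\mu]$: in $V[G]$, $X$ is well-orderable, so $X=\bigcup_{\mu<\lambda}f^{-1}[\mu]$ expresses $X$ as an increasing union of length $\lambda$; since $\aleph(X)=\lambda$, for each $\mu<\lambda$ there is an injection $\mu\to X$ in $M$, hence in $V[G]$ where $\abs X^{V[G]}<\lambda$, which bounds $\mu<\abs X^{V[G]}{}^+\leq\lambda$ uniformly — contradiction once we pick $\mu$ above that bound. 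Wait: that already shows $\lambda\leq(\abs X^{V[G]})^+$, so $\lambda$ is a successor of an ordinal of size $<\lambda$; but then $\cf(\lambda)=\aleph_0$ is impossible for a successor cardinal.

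The main obstacle is that the naive absoluteness argument (computing $\aleph(X)$ and $\aleph^*(X)$ via $V[G]$) seems to force $\lambda$ to always be a successor, which contradicts the theorem's allowing limit $\lambda$ of countable cofinality. The resolution — and the real content — is that $\aleph^*(X)=\lambda^+$ fails to be absolute: the surjection-based notion is not downward absolute in the same way, and more importantly $\aleph(X)$ computed in $M$ can be a limit cardinal even when $X$ is countable in $V[G]$, precisely because the injections $\mu\to X$ that exist in $V[G]$ need not lie in $M$. So the correct argument for $\subseteq$ must work \emph{inside} $M$: given $X$ with $\aleph(X)=\lambda$ a limit cardinal, one uses that $\lambda$ is a limit to run (a dual of) the Proposition~\ref{prop:the-machine} analysis and extract, from well-ordered subsets of $X$ of every size below $\lambda$, a cofinal $\omega$-sequence — this is where $\aleph(A)=\aleph_0$ (equivalently, the failure of $\AC_{\aleph_0}$) is used to show $\lambda$ cannot have uncountable cofinality, since an uncountable-cofinality limit would, via the injective seed and an interleaving of the witnessing injections, well-order too much of $X$ and violate Fact~\ref{fact:cohen-model}(3). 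The bound $\kappa\leq\lambda^+$ genuinely does follow from the outer model, since surjections are absolute enough (any $M$-surjection $X\to\alpha$ lies in $V[G]$, and in $V[G]$ one bounds $\alpha$ using $\aleph(X)^M\leq\aleph^*(X)^M$ and Fact~\ref{fact:cohen-model}(3)), so that part of the argument is safe; it is the lower structure — forcing $\lambda$ to be a successor when $\kappa=\lambda$, and forcing $\cf(\lambda)=\aleph_0$ when $\kappa=\lambda^+$ — that requires care and the specific combinatorics of Cohen's model.
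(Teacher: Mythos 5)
Your proposal has genuine gaps in both directions, and in each case the missing idea is the same: the paper's fiber decomposition and the absoluteness of cardinalities of well-orderable sets between $M$ and $V[G]$.

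For the $\supseteq$ direction, taking $X_n = A$ for all $n$ cannot work: $\prod_{n<\omega}A = A^\omega$ is non-empty in $M$ (any constant function $n\mapsto a$ is a member), so Proposition~\ref{prop:the-machine} gives you no cap on $\aleph(D)$. The parenthetical claim that a choice function for $\{A\times\{n\}\mid n<\omega\}$ would well-order a countable subset of $A$ is false, since the choice function may pick the same element of $A$ at every coordinate. What the paper does (Lemma~\ref{lem:cohen-lower-bound}) is take $X = \bigcup_{n<\omega}\lambda_n\times\Inj{n}{A}$, where $\Inj{n}{A}$ is the set of \emph{injections} $n\to A$. Then an injection $\lambda\to X$ must have image unbounded across the $\lambda_n\times\Inj{n}{A}$ (since in $V[G]$ each $\Inj{n}{A}$ is countable, so each finite initial union has size $<\lambda$), and from the resulting well-ordered unbounded family of finite injections into $A$ one extracts an injection $\omega\to A$, contradicting $\aleph(A)=\aleph_0$.

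For the $\subseteq$ direction, the step asserting $\abs X^{V[G]}<\lambda$ is unjustified (in general one only gets $\lambda\leq(\abs X^{V[G]})^+$, not the reverse), and as you yourself notice, the naive outer-model absoluteness argument collapses. Your proposed resolution is a gesture at ``a dual of Proposition~\ref{prop:the-machine}'' but never supplies the actual mechanism. The mechanism (Lemma~\ref{lem:cohen-hartlin-cf} and the companion lemma that $\aleph^*(X)$ is a successor) is: embed $X\subseteq[A]^\lomega\times\eta$, set $X_a = X\cap(\{a\}\times\eta)$ for each $a\in[A]^\lomega$, and observe that each $X_a$ is in $M$-definable bijection with a set of ordinals, so $\abs{X_a}$ is the same in $M$ and $V[G]$ because $\bbP$ is ccc. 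In $V[G]$, $[A]^\lomega$ is countable, so a surjection $f\colon X\to\mu$ with $\mu\in[\lambda,\kappa)$ exhibits $\mu=\bigcup_a f``X_a$ as a countable union; if $\cf(\mu)>\aleph_0$, some $\abs{X_a}\geq\mu$ in $V[G]$ and hence in $M$, contradicting $\aleph(X)\leq\mu$. This forces $\cf(\mu)=\aleph_0$ throughout $[\lambda,\kappa)$, which immediately gives $\kappa\leq\lambda^+$ (since $\cf(\lambda^+)>\aleph_0$) and, by a nearly identical argument applied to limit cardinals $\kappa\leq\aleph^*(X)$, that $\aleph^*(X)$ is a successor. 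The claim that $\kappa\leq\lambda^+$ ``genuinely does follow from the outer model'' by bounding $\alpha$ with Fact~\ref{fact:cohen-model}(3) is not an argument as written; without the fiber decomposition and the absoluteness of $\abs{X_a}$, you have no way to control $\abs X^{V[G]}$ in terms of $\lambda=\aleph(X)^M$.
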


To translate this to the notation of the \ref{thm:main-thm}, we have that $\phi=\psi=\aleph_0$, $\chi_0=\chi=\Omega=\psi^*=\aleph_1$, and $C=\{\aleph_0\}$. Hence,
\begin{equation*}
\Spec_\aleph(M)=\bigcup\left\{\begin{alignedat}{2}
\Suc&=\{\tup{\lambda^+,\lambda^+}&&\mid\lambda\in\Card\}\\
\frakD&\subseteq\{\tup{\lambda,\kappa}&&\mid\aleph_0\leq\lambda\leq\kappa\leq\aleph_1,\aleph_1\leq\kappa\}\\
\frakC&\subseteq\{\tup{\lambda,\lambda^+}&&\mid\cf(\lambda)=\aleph_0,\lambda<\aleph_1\}\\
\frakU&=\{\tup{\lambda,\lambda^+}&&\mid\cf(\lambda)=\aleph_0,\lambda\geq\aleph_1\}.
\end{alignedat}\right.
\end{equation*}
Therefore, both $\frakD$ and $\frakC$ are subsets of $\{\tup{\aleph_0,\aleph_1}\}$. However, we have already seen by Fact~\ref{fact:cohen-model} and Corollary~\ref{cor:cohen;linden-A-is-aleph1} that $\aleph(A)=\aleph_0$ and $\aleph^*(A)=\aleph_1$, and thus $\frakD=\frakC=\{\tup{\aleph_0,\aleph_1}\}$.

\subsubsection{The upper bound}

We begin with an upper bound for the spectrum.

\begin{lem}\label{lem:cohen-hartlin-cf}
For all $X$, if $\aleph(X)=\lambda$ and $\aleph^*(X)=\kappa$, then for all $\mu\in[\lambda,\kappa)$, $\cf(\mu)\leq\aleph_0$.
\end{lem}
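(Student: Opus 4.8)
The statement says: in Cohen's first model $M$, if $\aleph(X)=\lambda$ and $\aleph^*(X)=\kappa$, then every cardinal $\mu\in[\lambda,\kappa)$ has $\cf(\mu)\leq\aleph_0$. The natural strategy is contrapositive: suppose $\mu\in[\lambda,\kappa)$ has $\cf(\mu)\geq\aleph_1$ — i.e.\ $\mu$ is regular uncountable or of uncountable cofinality — and derive a contradiction by producing an injection $\mu\to X$. Since $\mu\geq\lambda=\aleph(X)$ we know no injection $\mu\to X$ exists, but $\mu<\kappa=\aleph^*(X)$ gives us a surjection $f\colon X\to\mu$ to work with, so the task is to convert this surjection into an injection using the special structure of $M$.

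\textbf{Key steps.} First I would invoke Statement (3) of Fact~\ref{fact:cohen-model}: fix an ordinal $\eta$ and an injection $g\colon X\to[A]^\lomega\times\eta$. Composing, the surjection $f\colon X\to\mu$ pushes forward to give, for each $\nu<\mu$, a nonempty subset of $[A]^\lomega\times\eta$, namely $g``f^{-1}(\nu)$. The plan is then to pick, for each $\nu<\mu$, a canonical representative from $g``f^{-1}(\nu)$ — but without full choice this requires care. The second coordinate lives in the well-ordered set $\eta$, so we can canonically choose the least second coordinate appearing; the obstruction is the first coordinate in $[A]^\lomega$, which need not be well-orderable in $M$. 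However, the crucial point is that $f``\mu$ being indexed by $\mu$ means we only need to handle $\mu$-many classes, and $[A]^\lomega$ supports a fine analysis: each finite subset of $A$ is determined by a symmetric name with a finite support, and there are only countably many finite subsets of $A$ "below" any given support (or: $[A]^\lomega$ is a countable union of well-orderable sets, or carries an appropriate $\sigma$-structure). The real engine should be that, in $M$, any function from a set of uncountable cofinality into $[A]^\lomega\times\eta$ is "eventually constant enough" — more precisely, one shows that a surjection from $X$ onto $\mu$ with $\cf(\mu)\geq\aleph_1$ would, after pulling back through $g$, be constant on an unbounded (hence cofinal) set of $\nu$'s in the $[A]^\lomega$-coordinate, because a single finite support in $V$ can only "see" countably much of $[A]^\lomega$, and $\cf(\mu)\geq\aleph_1$ forces cofinally many $\nu$ to share both a support and a value — yielding two distinct ordinals $\nu_0\neq\nu_1<\mu$ with $g``f^{-1}(\nu_0)$ and $g``f^{-1}(\nu_1)$ overlapping, contradicting that the $f^{-1}(\nu)$ are disjoint. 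Alternatively, and perhaps more cleanly: project $g\circ f^{-1}$-images to $\eta$ to get (after taking least elements) a function $\mu\to\eta$; its fibers partition $\mu$ into $\le|\eta|$ pieces, one of which is unbounded in $\mu$ when $\cf(\mu)$ is large relative to the structure, and on that piece the first coordinates are forced to collide by a support argument.

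\textbf{Main obstacle.} The hard part is the support/collision argument: making precise why uncountable cofinality of $\mu$ forbids a "spread-out" surjection $X\to\mu$ given only an injection $X\to[A]^\lomega\times\eta$. I expect one must work with a symmetric name $\dot X$, $\dot f$, $\dot g$, fix a support $E\in[\omega]^\lomega$ stabilizing all of them, and argue that for a condition forcing "$\check\nu\in\dot\mu$", the value $g(f^{-1}(\nu))$ is decided (modulo automorphisms fixing $E$) by a finite piece of information, so the map $\nu\mapsto(\text{that finite data})$ lands in a countable set; since $\cf(\mu)\geq\aleph_1$, some fiber is unbounded, and symmetry then transfers one witness in $f^{-1}(\nu_0)$ to one in $f^{-1}(\nu_1)$ via an automorphism moving names outside $E$ — contradicting disjointness of the fibers. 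Handling the automorphism bookkeeping and ensuring the finite data genuinely pins down the $[A]^\lomega$-coordinate up to the needed equivalence is the delicate step; everything else (the reduction via Fact~\ref{fact:cohen-model}(3), taking least second coordinates, the pigeonhole on fibers) is routine.
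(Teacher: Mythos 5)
Your starting point is right -- embed $X$ into $[A]^\lomega\times\eta$ via Fact~\ref{fact:cohen-model}(3) and take the surjection $f\colon X\to\mu$ -- but you then miss the one idea that makes this lemma easy, and your substitute for it is left unproved at its hardest step. The key move is to \emph{pass to the outer model} $V[G]$. There, $A$ is countable (so $[A]^\lomega$ is too) and $\AC$ holds. Writing $X_a=X\cap(\{a\}\times\eta)$, in $V[G]$ one sees $\mu=\bigcup_{a\in[A]^\lomega}f``X_a$, a \emph{countable} union of sets. If $\cf(\mu)>\aleph_0$, countably many sets each of size $<\mu$ have union of size $<\mu$, so some $f``X_a$, hence some $X_a$, has size $\geq\mu$ in $V[G]$. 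Since $X_a$ is a set of ordinals (well-orderable in $M$) and $\bbP$ is c.c.c., $M$ and $V[G]$ agree on its cardinality, so $\abs{X_a}\geq\mu$ holds in $M$, giving an injection $\mu\to X$ in $M$ and contradicting $\aleph(X)=\lambda\leq\mu$.

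Instead of this, you try to argue entirely inside $M$ with names, supports, and automorphisms, aiming to force a ``collision'' of two fibers $f^{-1}(\nu_0)$ and $f^{-1}(\nu_1)$. You explicitly flag that ``the automorphism bookkeeping and ensuring the finite data genuinely pins down the $[A]^\lomega$-coordinate\dots{} is the delicate step,'' but that delicate step \emph{is} the entire content of such an argument, and it is not carried out; as written, the proposal is a sketch with a hole where the proof should be. Your aside that ``$[A]^\lomega$ is a countable union of well-orderable sets'' is closest in spirit to the real argument, but it is more awkward to use as an internal fact of $M$ than as the trivial external fact that $[A]^\lomega$ is countable in $V[G]$. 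The lesson here is that the cardinality/cofinality agreement between $M$ and $V[G]$ (a consequence of c.c.c.) lets you do the combinatorics in the choiceful outer model and pull well-orderable conclusions back; this completely sidesteps any name-level symmetry argument.
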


\begin{proof}
We may assume that $X\subseteq[A]^\lomega\times\eta$ for some ordinal $\eta$. For each $a\in[A]^\lomega$, let $X_a=X\cap(\{a\}\times\eta)$, so each $X_a$ is well-orderable. Let $\mu\in[\lambda,\kappa)$. Since $\mu<\kappa$, there is a surjection $f\colon X\to\mu$. Hence, in $V[G]$, we have that $\mu=\bigcup_{a\in[A]^\lomega}f``X_a$, so $\mu$ is the union of countably many sets (note that $A$ is countable in $V[G]$, and so $[A]^\lomega$ is as well). If $\mu$ has uncountable cofinality then there is $a\in[A]^\lomega$ such that $V[G]\vDash\abs{X_a}\geq\mu$. Since $M$ and $V[G]$ agree on the cardinalities of sets of ordinals, and $X_a$ is well-orderable, we have that $\abs{X_a}\geq\mu$ in $M$ as well. However, this contradicts $\mu\geq\aleph(X)$. Hence we must have $\cf(\mu)=\aleph_0$.
\end{proof}

\begin{lem}
For all $X$, $\aleph^*(X)$ is a successor.
\end{lem}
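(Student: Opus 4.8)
**Plan for the proof that $\aleph^*(X)$ is a successor in Cohen's first model.**

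The plan is to show that $\aleph^*(X)$ can never be a limit cardinal. Suppose towards a contradiction that $\aleph^*(X) = \kappa$ is a limit cardinal. By Fact~\ref{fact:cohen-model}(3) we may assume $X \subseteq [A]^\lomega \times \eta$ for some ordinal $\eta$, and as in Lemma~\ref{lem:cohen-hartlin-cf} we write $X_a = X \cap (\{a\}\times\eta)$ for $a \in [A]^\lomega$, so that each $X_a$ is well-orderable. For each $\mu < \kappa$ there is a surjection $f_\mu \colon X \to \mu$, and working in $V[G]$ (where $[A]^\lomega$ is countable) this exhibits $\mu$ as a countable union $\bigcup_{a}f_\mu``X_a$; so the regularity argument of Lemma~\ref{lem:cohen-hartlin-cf} shows $\cf(\mu) = \aleph_0$ for cofinally many $\mu < \kappa$, hence $\cf(\kappa) = \aleph_0$ as well (a supremum of countably-cofinal cardinals, or indeed any cardinals, of countable cofinality has countable cofinality once $\kappa$ itself is a limit — more carefully, $\kappa$ is singular of cofinality $\aleph_0$ since it is a limit cardinal that is the increasing union of fewer than $\kappa$ many smaller cardinals, using $\aleph(X)\le\kappa$ to bound the well-orderable pieces).

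The key step is then to promote the surjections onto all $\mu < \kappa$ to a single surjection onto $\kappa$, contradicting $\aleph^*(X) = \kappa$. Fix an increasing cofinal sequence $\tup{\mu_n \mid n < \omega}$ in $\kappa$ with $\mu_0 > \aleph(X)$. The obstacle is that a priori we only have a surjection onto each $\mu_n$, not a uniformly chosen sequence of them, so we cannot simply glue. To get around this, I would instead use the internal structure: for each $a \in [A]^\lomega$ the set $X_a$ is well-orderable, so it has a cardinality $\abs{X_a} < \aleph(X) \le \kappa$ in $M$, and $\abs{X} \le^* \kappa$ precisely fails because no single surjection exists, yet for every $\mu<\kappa$ \emph{some} surjection does. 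Consider the canonical surjection $g \colon X \to \bigl(\sup_{a}\abs{X_a}\bigr)$ obtained by using that $[A]^\lomega$ is linearly orderable in $M$ via its realisation (indeed $[A]^\lomega$ injects into finite sequences from $\omega$ after a choice of enumeration — but there is none in $M$; so instead observe that each $X_a$, being a well-orderable subset of $\{a\}\times\eta$, carries its own canonical ordering as a subset of $\eta$, giving a canonical partial surjection $X_a \to \abs{X_a}$ definably from $a$). Patching these canonically-defined surjections $X_a \to \abs{X_a} = \operatorname{ot}(X_a)$ over all $a$, and composing with the projection $X \to [A]^\lomega$, we realise $X$ as surjecting onto $\sup_a \operatorname{ot}(X_a)$. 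This supremum must be $\ge \kappa$: if it were some $\rho<\kappa$ then every $X_a$ would have order type $<\rho$, so $X \subseteq [A]^\lomega\times\rho$ up to the canonical reindexing, and then a surjection $X\to\rho^+<\kappa$ would force (by the pigeonhole over the countable set $[A]^\lomega$ in $V[G]$, exactly as above) some $X_a$ to surject onto a cofinal piece of $\rho^+$, contradicting $\operatorname{ot}(X_a)<\rho$. Hence $\sup_a\operatorname{ot}(X_a)\ge\kappa$, giving a surjection $X\to\kappa$, contradicting $\aleph^*(X)=\kappa$.

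The main obstacle, as flagged, is the absence of choice: one cannot pick witnesses $f_{\mu_n}$ simultaneously, so the argument must route everything through \emph{canonically definable} data — here the order types $\operatorname{ot}(X_a)$ of the well-orderable slices and the canonical surjections onto them, which require no choice to assemble into a global surjection on $X$. The pigeonhole passages to $V[G]$ (using that $[A]^\lomega$ is countable there and that $M$, $V[G]$ agree on cardinalities of sets of ordinals) are exactly the moves already made in Lemma~\ref{lem:cohen-hartlin-cf}, so they can be cited in spirit rather than re-proved. Once $\sup_a \operatorname{ot}(X_a) \ge \kappa$ is established, the contradiction with $\aleph^*(X) = \kappa$ is immediate, since projection followed by the canonical slice-surjections is a genuine surjection of $X$ onto an ordinal $\ge\kappa$, hence onto $\kappa$.
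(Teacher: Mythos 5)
Your proposal is correct and takes essentially the same approach as the paper: both slice $X$ into well-orderable pieces $X_a$ over $[A]^{<\omega}$, use the $V[G]$-pigeonhole (a countable union covering an ordinal of uncountable cofinality) to force $\sup_a\abs{X_a}\geq\kappa$, and derive the surjection onto $\kappa$ from the canonical projection of $X$ to its second coordinate. The worry about simultaneously choosing the $f_{\mu_n}$ never arises in the paper's proof either, for exactly the reason you identify — the final surjection is the single canonical projection, not a union of chosen surjections; and your identification $\abs{X_a}=\operatorname{ot}(X_a)$ is not literally correct but immaterial, since $\abs{X_a}\leq\operatorname{ot}(X_a)$ suffices.
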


\begin{proof}
As before, we may assume that $X\subseteq[A]^\lomega\times\eta$ for some ordinal $\eta$, and again we shall denote by $X_a$ the set $X\cap(\{a\}\times\eta)$ for all $a\in[A]^\lomega$. Let $\kappa$ be a limit cardinal such that $\aleph^*(X)\geq\kappa$. Then we must show that $\aleph^*(X)>\kappa$, that is we must show that there is a surjection $X\to\kappa$. Let $\mu<\kappa$ be infinite. Since $\kappa$ is a limit, we still have that $\mu^+<\kappa$, and $\mu^+$ is regular in both $M$ and $V[G]$. Since $\mu^+<\kappa$, there is a surjection $f\colon X\to\mu^+$, so in $V[G]$ we have $\mu^+=\bigcup\{f``X_a\mid a\in[A]^\lomega\}$ a countable union. Since $\cf(\mu^+)>\aleph_0$, there is $a\in[A]^\lomega$ such that $\abs{f``X_a}\geq\mu^+$ in $V[G]$, and so $\abs{X_a}\geq\mu^+$ in $V[G]$. However, $X_a$ is well-orderable, and so $\abs{X_a}\geq\mu^+$ in $M$ as well. Therefore, for all $\mu<\kappa$, there is $a\in[A]^\lomega$ such that $\abs{X_a}>\mu$. Hence the projection of $X$ onto its second co-ordinate is a surjection onto a subset of $\eta$ of cardinality at least $\kappa$, and so this can be turned into a surjection $X\to\kappa$.
\end{proof}

\begin{cor}\label{cor:no-reverse-implication}
Both ``for all $X$, $\aleph^*(X)$ is regular'' and ``for all $X$, $\aleph^*(X)$ is a successor'' are strictly weaker than $\AC_\WO$.\qed
\end{cor}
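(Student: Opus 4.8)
The plan is to combine the two preceding lemmas. The previous lemma shows that in Cohen's first model $\aleph^*(X)$ is always a successor cardinal, and Lemma~\ref{lem:cohen-hartlin-cf} together with the argument for the ``upper bound'' shows that whenever $\aleph(X)=\lambda$ we have $\aleph^*(X)\leq\lambda^+$ (since if $\aleph^*(X)>\lambda^+$ then $\lambda$ would lie in $[\lambda,\aleph^*(X))$ and would need countable cofinality, but also some $X_a$ would witness $\abs{X_a}\geq\lambda$, contradicting $\aleph(X)=\lambda$; more carefully one uses the successor-ness just established). Consequently $\aleph^*(X)$ is a regular successor cardinal for every $X$ in $M$. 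So to prove the corollary it suffices to exhibit a model of $\ZF$ in which one of these two properties fails, and $M$ itself — Cohen's first model — is such a model, since $A$ has $\aleph(A)=\aleph_0$ by Fact~\ref{fact:cohen-model}, so $M\not\vDash\AC_{\aleph_0}$ and hence $M\not\vDash\AC_\WO$.

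Concretely, I would argue as follows. By the previous lemma, for every $X\in M$ the Lindenbaum number $\aleph^*(X)$ is a successor cardinal; and every successor cardinal is regular (this is provable in $\ZF$, as successor cardinals are always regular without any choice). Hence in $M$ both statements ``for all $X$, $\aleph^*(X)$ is a successor'' and ``for all $X$, $\aleph^*(X)$ is regular'' hold. On the other hand, $A\in M$ satisfies $\aleph(A)=\aleph_0$ by Fact~\ref{fact:cohen-model}(1), so there is no injection $\omega\to A$ that can be extended — in particular, by \cite[Section~2.4.1]{jech_axiom_1973}, $\AC_{\aleph_0}$ fails in $M$ (an infinite set with no countably infinite subset), and therefore $\AC_\WO$ fails in $M$. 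Thus $M$ is a model where the two ``$\aleph^*$'' statements hold but $\AC_\WO$ does not, which — since $\AC_\WO$ implies both of these statements by Theorem~\ref{thm:ac-wo-hartlin} (via Conditions~\ref*{thm:ac-wo-hartlin;condition:suc} and~\ref*{thm:ac-wo-hartlin;condition:reg} applied to $\aleph(X)$, noting $\aleph^*(X)\geq\aleph(X)$ and a symmetric upwards-transfer argument, or more directly from the analysis in this section) — shows both implications are strict.

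The only genuinely delicate point is making sure the forward implications really do hold, i.e. that $\AC_\WO$ implies ``$\aleph^*(X)$ is a successor'' and ``$\aleph^*(X)$ is regular''; but under $\AC_\WO$ we have $\aleph(X)=\aleph^*(X)$ for all $X$ by Theorem~\ref{thm:acwo-equiv-pelc}, and $\aleph(X)$ is a regular successor by Theorem~\ref{thm:ac-wo-hartlin}, so both forward implications are immediate. Everything else is bookkeeping: the corollary is essentially just the observation that the previous lemma plus the failure of $\AC_{\aleph_0}$ in Cohen's first model together witness strictness. I do not expect any real obstacle here, which is presumably why the paper marks it with \qed and no written proof.
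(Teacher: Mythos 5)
Your overall plan — Cohen's first model satisfies both displayed statements but not $\AC_\WO$ — is exactly the paper's, and most of the bookkeeping is right. But you have one genuine error: you assert that ``every successor cardinal is regular (this is provable in $\ZF$, as successor cardinals are always regular without any choice).'' This is false. In $\ZF$ alone, successor cardinals can be singular; the classical Feferman--Levy model has $\cf(\aleph_1)=\aleph_0$, and Gitik produced a model of $\ZF$ (from large cardinals) in which \emph{every} uncountable cardinal is singular. So ``$\aleph^*(X)$ is a successor for all $X$'' does not by itself yield ``$\aleph^*(X)$ is regular for all $X$'' in an arbitrary model of $\ZF$.

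The conclusion is nevertheless correct for Cohen's first model, but the reason is specific to that model: as the paper notes in setting it up, $\Add(\omega,\omega)$ is c.c.c., so $V$, $M$, and $V[G]$ all agree on the cardinalities and cofinalities of ordinals. Hence if $\aleph^*(X)=\mu^+$ in $M$, then $\cf(\mu^+)$ computed in $M$ equals $\cf(\mu^+)$ computed in $V$, and $V\vDash\ZFC$, so $\mu^+$ is regular in $M$. Replace your appeal to the false general principle with this c.c.c.-preservation argument and the proof is complete. The rest of what you wrote — $\aleph(A)=\aleph_0$ gives an infinite set with no countably infinite subset, so $\AC_{\aleph_0}$ and hence $\AC_\WO$ fail in $M$, and $\AC_\WO$ implies both target statements via Theorem~\ref{thm:ac-wo-hartlin} — is correct and matches the paper's implicit argument.
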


\begin{cor}\label{cor:cohen}
If $\aleph(X)=\lambda$ and $\aleph^*(X)=\kappa$, then one of the following holds:
\begin{enumerate}[label=\textup{(\alph*)}]
\item $\kappa=\lambda$ are successors; or
\item $\kappa=\lambda^+$ and $\cf(\lambda)=\aleph_0$.
\end{enumerate}
\end{cor}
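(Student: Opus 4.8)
The plan is to combine the two preceding lemmas with the successor-and-regularity characterisation baked into the earlier results. We have just shown that $\aleph^*(X)$ is always a successor, so write $\kappa = \nu^+$ for some cardinal $\nu$. We also have $\lambda \leq \kappa$ always, so there are only two cases: either $\lambda = \kappa$, or $\lambda \leq \nu$, i.e. $\lambda < \kappa$.

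In the first case, $\lambda = \kappa = \nu^+$ is a successor, and moreover $\aleph(X) = \aleph^*(X)$; I should note that $\lambda$ is automatically a successor here since $\kappa$ is, giving conclusion (a). (There is nothing further to check: both numbers are the same successor cardinal $\nu^+$.) In the second case $\lambda < \kappa = \nu^+$, so $\lambda \leq \nu$. First I would argue $\lambda = \nu$, hence $\kappa = \lambda^+$: if $\lambda < \nu$ then $\lambda < \lambda^+ \leq \nu < \kappa$, so $\lambda^+ \in [\lambda, \kappa)$ and Lemma~\ref{lem:cohen-hartlin-cf} forces $\cf(\lambda^+) \leq \aleph_0$; but $\lambda^+$ is a successor cardinal and hence regular in $M$ (this is absolute between $M$ and $V[G]$, and $V[G] \vDash \AC$), so $\cf(\lambda^+) = \lambda^+ > \aleph_0$, a contradiction. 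Therefore $\nu = \lambda$ and $\kappa = \lambda^+$. Finally, apply Lemma~\ref{lem:cohen-hartlin-cf} once more with $\mu = \lambda \in [\lambda, \kappa) = [\lambda, \lambda^+)$ to conclude $\cf(\lambda) \leq \aleph_0$; since $\lambda$ is infinite (indeed $\lambda = \aleph(X) \geq \aleph_0$), this gives $\cf(\lambda) = \aleph_0$, which is conclusion (b).

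I do not anticipate a genuine obstacle here — the corollary is essentially a repackaging of the two lemmas above. The only point needing a little care is the appeal to regularity of successor cardinals in $M$: one must remember that $\aleph^*(X)$ being a successor does not by itself say $\aleph(X)$ is a successor, so the dichotomy $\lambda = \kappa$ versus $\lambda^+ \leq \kappa$ has to be driven through the cofinality lemma rather than through any direct structural claim about $\lambda$. Once that is set up, the case split is mechanical.

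\begin{proof}
By the preceding lemma, $\aleph^*(X) = \kappa$ is a successor cardinal, so write $\kappa = \nu^+$. We always have $\lambda = \aleph(X) \leq \aleph^*(X) = \kappa$.

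If $\lambda = \kappa$, then $\lambda = \kappa = \nu^+$ is a successor, and conclusion (a) holds.

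Otherwise $\lambda < \kappa = \nu^+$, so $\lambda \leq \nu$. We first claim $\lambda = \nu$. Suppose not, so $\lambda < \nu$, and hence $\lambda < \lambda^+ \leq \nu < \nu^+ = \kappa$, so $\lambda^+ \in [\lambda, \kappa)$. By Lemma~\ref{lem:cohen-hartlin-cf}, $\cf(\lambda^+) \leq \aleph_0$. But $\lambda^+$ is a successor cardinal and therefore regular in $V[G] \vDash \AC$; since $M$ and $V[G]$ agree on cardinalities and cofinalities of ordinals, $\lambda^+$ is regular in $M$ too, so $\cf(\lambda^+) = \lambda^+ > \aleph_0$, a contradiction. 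Hence $\lambda = \nu$ and $\kappa = \lambda^+$.

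Finally, $\lambda \in [\lambda, \lambda^+) = [\lambda, \kappa)$, so by Lemma~\ref{lem:cohen-hartlin-cf}, $\cf(\lambda) \leq \aleph_0$. Since $\lambda = \aleph(X) \geq \aleph_0$ is infinite, $\cf(\lambda) = \aleph_0$, and conclusion (b) holds.
\end{proof}
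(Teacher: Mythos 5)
Your proof is correct and follows essentially the same route as the paper: invoke the lemma that $\aleph^*(X)$ is a successor to settle the $\lambda=\kappa$ case, and use Lemma~\ref{lem:cohen-hartlin-cf} together with the regularity of $\lambda^+$ to force $\kappa=\lambda^+$ and $\cf(\lambda)=\aleph_0$ in the $\lambda<\kappa$ case. The only cosmetic difference is that the paper observes $\lambda^+\notin[\lambda,\kappa)$ directly rather than introducing $\nu$ with $\kappa=\nu^+$ and arguing $\lambda=\nu$.
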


\begin{proof}
Suppose that $\kappa=\lambda$. Since $\kappa$ is a successor, $\lambda$ must be as well. Suppose instead that $\kappa>\lambda$. Then by Lemma~\ref{lem:cohen-hartlin-cf}, for all $\mu\in[\lambda,\kappa)$, $\cf(\mu)=\aleph_0$. However, $\cf(\lambda^+)=\lambda^+>\aleph_0$ in $M$, so we have $\lambda^+\notin[\lambda,\kappa)$, i.e. $\kappa=\lambda^+$.
\end{proof}

\subsubsection{The lower bound}

To complete Theorem~\ref{thm:cohen-spectrum}, we must show the lower bound for the spectrum.

\begin{lem}\label{lem:cohen-lower-bound}
For all $\lambda$ such that $\cf(\lambda)=\aleph_0$ there is a set $X$ such that $\aleph(X)=\lambda$ and $\aleph^*(X)=\lambda^+$.
\end{lem}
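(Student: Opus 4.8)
The plan is to build $X$ from the set $A$ (whose properties are collected in Fact~\ref{fact:cohen-model}) using the upwards transfer construction from Section~\ref{s:ac-wo}, much as in the proof of Corollary~\ref{cor:cohen;linden-A-is-aleph1} combined with Proposition~\ref{prop:the-machine}. Fix a strictly increasing sequence $\tup{\mu_n\mid n<\omega}$ of cardinals with supremum $\lambda$. The natural candidate is to set $X = \bigcup_{n<\omega}(A\times\mu_n)$, or more carefully a version where the $n$th block is $A\times[\mu_n,\mu_{n+1})$ so the blocks are genuinely disjoint; in either case $X$ is (in $M$) a countable union of well-orderable pieces, each of size $<\lambda$, but $X$ itself is not well-orderable because $A$ embeds into it and $\aleph(A)=\aleph_0$ is not realised as an injection of $\omega$.

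First I would check $\aleph^*(X)\geq\lambda^+$: projecting onto the ordinal coordinate gives a surjection $X\to\lambda$ (since the $\mu_n$ are cofinal in $\lambda$), so there is a surjection onto every $\nu<\lambda$ and onto $\lambda$ itself, giving $\aleph^*(X)>\lambda$, i.e.\ $\aleph^*(X)\geq\lambda^+$. Next, $\aleph(X)\leq\lambda^+$: in $V[G]$ we have $\abs{A}=\aleph_0$, so $\abs{X}^{V[G]}=\abs{\lambda}=\lambda$, hence there is no injection $\lambda^+\to X$ in $V[G]$, a fortiori none in $M$; so $\aleph(X)\leq\lambda^+$. The two remaining points are $\aleph(X)\geq\lambda$ and $\aleph(X)\not\geq\lambda^+$, i.e.\ $\aleph(X)=\lambda$. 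For $\aleph(X)\geq\lambda$: each $\nu<\lambda$ lies below some $\mu_n$, and fixing any $a\in A$ the map $\xi\mapsto\tup{a,\xi}$ injects $\nu$ into $X$ (one must phrase this for whichever disjointification of the blocks is used). For $\aleph(X)\not\geq\lambda^+$: suppose $f\colon\lambda\to X$ were an injection in $M$. Work in $V[G]$, where $A=\{a_k\mid k<\omega\}$ is countable; then $X=\bigcup_{k<\omega}X_{a_k}$ with each $X_{a_k}$ well-orderable of size $<\lambda$. Since $\cf(\lambda)=\aleph_0$ and $\lambda$ is a limit, I would argue that $f``\lambda$ cannot be covered by countably many sets each of size $<\lambda$ — more precisely, $\lambda$ itself is not the union of countably many sets of size $<\lambda$ when... wait, that is false in general since $\cf(\lambda)=\aleph_0$.

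So the argument for $\aleph(X)\not\geq\lambda^+$ has to be more delicate, and this is the step I expect to be the main obstacle. The resolution should mirror the proof of Lemma~\ref{lem:cohen-hartlin-cf} and Proposition~\ref{prop:the-machine}: rather than using $\bigcup_{n}(A\times\mu_n)$ directly, use the genuine upwards transfer construction $D = D(X_0,\aleph_0)$ applied to a cleverly chosen countable family $X_0 = \{X_\alpha\mid\alpha<\omega\}$, which forces the Hartogs number to land exactly on the limit cardinal $\lambda$ via the inductively growing $\kappa_\alpha$'s; the key is that an injection $f\colon\lambda\to D$ would, by the argument in Proposition~\ref{prop:the-machine}, have image meeting $D_\alpha = Y_\alpha\times\kappa_\alpha$ for unboundedly many $\alpha$, and then projecting to the first coordinate would yield choice functions on the $Y_\alpha$'s, which one arranges to be impossible (e.g.\ take each $X_\alpha = A$, so a choice function picking elements of unboundedly many $Y_\alpha = {}^{\alpha}A$ would in particular pick an element of $A$, contradicting nothing — so instead take $X_\alpha$ to be a set with no choice function on the relevant product, e.g.\ derived from $A$ so that $\prod X_\alpha=\emptyset$, using that $\aleph(A)=\aleph_0$ means $A$ has no $\omega$-sequence of distinct elements). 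Concretely: choose $\tup{\mu_\alpha\mid\alpha<\omega}$ cofinal in $\lambda$, let $X_\alpha$ be such that each finite subproduct is nonempty but $\prod_{\alpha<\omega}X_\alpha=\emptyset$ (for instance $X_\alpha = A$, which works precisely because $\aleph(A)=\aleph_0$ gives $\prod_{\alpha<\omega}A=\emptyset$ as an $\omega$-sequence of distinct... no — one needs injections; take instead $X_\alpha=\Inj{\alpha}{A}$-style data or simply note $\prod_n A\ne\emptyset$ trivially, so one genuinely needs the disjointness trick). I would therefore present the clean version: set $X=\bigcup_{n<\omega}\big(A\times[\mu_n,\mu_{n+1})\big)$ with $\mu_0=0$, show $\aleph^*(X)=\lambda^+$ and $\aleph(X)\le\lambda^+$ as above, show $\aleph(X)\ge\lambda$ as above, and for $\aleph(X)=\lambda$ invoke Proposition~\ref{prop:the-machine} after recognising $X$ as (isomorphic to) an upwards transfer construction $D(\tup{A\mid n<\omega},\aleph_0)$ up to the coding of blocks, so that $\aleph(X)\ge\lambda^+$ would force $\prod_{n<\omega}A\ne\emptyset$ in a manner giving a surjection-free injection and contradicting $\aleph(A)=\aleph_0$; the verification that the block structure of $X$ matches the $D_\alpha$ well enough to apply Proposition~\ref{prop:the-machine}, and the extraction of the contradiction with $\aleph(A)=\aleph_0$, is the technical heart of the proof.
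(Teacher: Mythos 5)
There is a genuine gap, and it sits exactly where you flagged uncertainty. The set you finally propose, $X=\bigcup_{n<\omega}\bigl(A\times[\mu_n,\mu_{n+1})\bigr)$, is (up to a bijection) just $A\times\lambda$, and this has $\aleph(X)=\lambda^+$, not $\lambda$: fixing any single $a\in A$, the map $\xi\mapsto\tup{a,\xi}$ is an injection $\lambda\to X$. The problem is not the covering argument you worried about; it is that the set is simply too big. The "fibres" $X_a=\{a\}\times\lambda$ each have size $\lambda$, so there is no hope of bounding $\aleph(X)$ below $\lambda^+$.

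Your attempted rescue via the upwards transfer machine also fails for the reason you yourself noticed and then stepped past: if each $X_\alpha=A$, then $\prod_{n<\omega}A\neq\emptyset$ in $M$ (take a constant sequence), so Proposition~\ref{prop:the-machine} gives no contradiction. Saying "$\aleph(A)=\aleph_0$ means $A$ has no $\omega$-sequence of distinct elements" is true but irrelevant to $\prod_n A$, which requires no distinctness. You do briefly gesture at "$\Inj{\alpha}{A}$-style data," and this is exactly the missing ingredient: the paper takes $X_n=\lambda_n\times\Inj{n}{A}$, so that the second coordinates of the image of any putative injection $f\colon\lambda\to X$ form a well-ordered family of injections $n\to A$ with $n$ unbounded. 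These can then be assembled (choosing, at each stage, a value not yet used, which is possible because the domain lengths grow without bound) into an injection $\omega\to A$, contradicting $\aleph(A)=\aleph_0$. Your other steps are sound — the reduction to proving $\aleph(X)=\lambda$ via Corollary~\ref{cor:cohen}, the bound $\aleph(X)\leq\lambda^+$ from $\abs{X}^{V[G]}=\lambda$, and $\aleph(X)\geq\lambda$ from the fibre injections — but the crucial $\aleph(X)<\lambda^+$ requires the $\Inj{n}{A}$ construction, not $A$ itself.
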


\begin{proof}
By Corollary~\ref{cor:cohen}, it will be sufficient to prove that there is a set $X$ such that $\aleph(X)=\lambda$. By Fact~\ref{fact:cohen-model}, suppose that $\lambda>\aleph_0$. Let $\tup{\lambda_n\mid n<\omega}$ be a strictly increasing sequence of cardinals such that $\sup\{\lambda_n\mid n<\omega\}=\lambda$. For each $n<\omega$, let $X_n=\lambda_n\times\Inj{n}{A}$, and let $X=\bigcup_{n<\omega}X_n$. We certainly have that $\aleph(X)\geq\lambda$, as for all $n<\omega$ there is an injection $f_n\colon\lambda_n\to X$ by taking arbitrary $c\in\Inj{n}{A}$, and having $f_n(\alpha)=\tup{\alpha,c}$. Suppose now that there were an injection $f\colon\lambda\to X$. Note that in $V[G]$, $\abs{\Inj{n}{A}}=\aleph_0$, and so $\abs{\bigcup_{m<n}\lambda_m\times\Inj{m}{A}}<\lambda$ for all $n<\omega$. Therefore, $f``\lambda$ is not a subset of $\bigcup_{m<n}\lambda_m\times\Inj{m}{A}$ for any $n<\omega$, so $f$ must give us a well-ordered collection of injections $n\to A$ for arbitrarily large $n$. We may now put these injections together to produce an injection $\omega\to A$, contradicting $\aleph(A)=\aleph_0$. Hence an injection $\lambda\to X$ cannot exist and $\aleph(X)=\lambda$ as desired.
\end{proof}

Combining Corollary~\ref{cor:cohen} and Lemma~\ref{lem:cohen-lower-bound}, we immediately obtain Theorem~\ref{thm:cohen-spectrum}:
\begin{equation*}
\Spec_\aleph(M)=\{\tup{\lambda^+,\lambda^+}\mid\lambda\in\Card\}\cup\{\tup{\lambda,\lambda^+}\mid\cf(\lambda)=\aleph_0\}.
\end{equation*}

\subsection{The spectrum of a model of $\SVC$}\label{s:spectra;ss:svc}

When dealing with Cohen's first model, having an outer model of $\ZFC$ that agrees on the cardinalities and cofinalities of ordinals was an important fact that appeared in almost every proof of the previous section. Fortunately, this is not unique to Cohen's first model, and is very close to the conclusions that can be drawn from $\SVC$. Let $M$ be a model of $\SVC+\lnot\AC$, witnessed by an inner model $V$ of $\ZFC$, a symmetric system $\tup{\bbP,\sG,\sF}$, and an injective seed $A$. Let $V[G]$ be the outer forcing extension of $M$, so $V\subseteq M\subseteq V[G]$ for $V$-generic $G\subseteq\bbP$, and $M=\HS_\sF^G$. For a set $X\in M$, denote by $\cabs{X}$ the least ordinal $\delta$ such that $V[G]\vDash``\abs{X}=\abs{\delta}$ and $\delta$ is a cardinal$"$. Unlike in Cohen's model, we may not have that $\cabs{\kappa}=\kappa$ for all cardinals $\kappa\in M$ since $\bbP$ may collapse some cardinals, but by appealing to large enough cardinals we are able to overcome this obstacle.

\begin{fact}[{\cite[Theorem~15.3]{jech_set_2003}}]\label{fact:ccc-card-cof-agree}
There is a cardinal $\lambda$ (e.g. $\abs{\bbP}^+$) such that, if ${\alpha,\beta\geq\lambda}$ are ordinals, then $V\vDash\abs{\alpha}=\abs{\beta}$ if and only if $V[G]\vDash\abs{\alpha}=\abs{\beta}$. Furthermore, if $\eta\geq\lambda$ is a cardinal then $V\vDash\cf(\alpha)=\eta$ if and only if $V[G]\vDash\cf(\alpha)=\eta$.
\end{fact}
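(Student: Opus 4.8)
This is \cite[Theorem~15.3]{jech_set_2003}, and the plan is to recover it from the classical chain-condition preservation theorem, applied with $\kappa=\lambda=\abs{\bbP}^+$. First I would note that $\bbP$ is $\lambda$-c.c.: an antichain is a subset of $\bbP$, hence has size at most $\abs{\bbP}<\lambda$. Since $V\vDash\ZFC$ and $\lambda$ is a successor cardinal there, $\lambda$ is regular in $V$; I will use this throughout.

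The core lemma to establish is: if $\kappa$ is regular, $\bbP$ is $\kappa$-c.c., $\gamma<\kappa$, and $f\colon\gamma\to\Ord$ lies in $V[G]$, then in $V$ there is $F\colon\gamma\to V$ with $f(\xi)\in F(\xi)$ and $\abs{F(\xi)}<\kappa$ for all $\xi<\gamma$. Given a $\bbP$-name $\dot{f}$ for $f$, set $F(\xi)=\{\beta\in\Ord\mid(\exists p\in\bbP)\ p\forces\dot{f}(\check\xi)=\check\beta\}$; this is a set, since distinct values of $\beta$ are forced by disjoint nonempty subsets of $\bbP$ (so $F(\xi)$ injects into $\power(\bbP)$), and choosing one witnessing condition $p_\beta$ per $\beta\in F(\xi)$ yields an antichain $\{p_\beta\mid\beta\in F(\xi)\}$, so $\abs{F(\xi)}<\kappa$. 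As $\kappa$ is regular and $\gamma<\kappa$, the set $S\defeq\bigcup_{\xi<\gamma}F(\xi)$ lies in $V$, has $\abs{S}^V<\kappa$, and contains $\operatorname{ran}(f)$. In particular, if $\alpha$ is an ordinal with $\cf^{V[G]}(\alpha)<\kappa$, picking a cofinal $f\colon\gamma\to\alpha$ in $V[G]$ with $\gamma<\kappa$ produces $S\in V$ cofinal in $\alpha$ with $\abs{S}^V<\kappa$, whence $\cf^V(\alpha)\leq\operatorname{otp}(S)<\kappa$; equivalently, $\cf^V(\alpha)\geq\kappa\implies\cf^{V[G]}(\alpha)\geq\kappa$.

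With $\kappa=\lambda$, the rest is bookkeeping. A ground-model cofinal map stays cofinal, so we always have $\cf^{V[G]}(\alpha)\leq\cf^V(\alpha)$; combined with the previous paragraph, a $V$-regular $\eta\geq\lambda$ stays regular in $V[G]$. For cardinals: every $V$-successor cardinal $\geq\lambda$ is $V$-regular, hence stays a cardinal; every $V$-limit cardinal $>\lambda$ is the supremum of the $V$-successor cardinals below it, which are eventually $\geq\lambda$, hence stays a cardinal; and $\lambda$ itself is a cardinal. Thus every $V$-cardinal $\geq\lambda$ remains a cardinal in $V[G]$, so for ordinals $\alpha,\beta\geq\lambda$ we get $\abs{\alpha}^{V[G]}=\abs{\alpha}^V$, giving the cardinality clause (the ``only if'' direction being trivial from $V\subseteq V[G]$). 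For the cofinality clause, if $\cf^V(\alpha)\geq\lambda$ then, using a $V$-cofinal map $\cf^V(\alpha)\to\alpha$ together with the preserved regularity of $\cf^V(\alpha)$, one gets $\cf^{V[G]}(\alpha)=\cf^V(\alpha)$; and if $\cf^V(\alpha)<\lambda$ then $\cf^{V[G]}(\alpha)<\lambda$ directly, so a short case split on whether $\cf^V(\alpha)$ is $\geq\lambda$ or $<\lambda$ yields the stated equivalence for $\eta\geq\lambda$. The only genuine work is the antichain argument of the middle paragraph; everything after it is routine, which is presumably why the source states it without proof.
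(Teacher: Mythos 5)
The paper cites this as a Fact from Jech without reproof, so there is no in-paper argument to compare against; your plan to derive it from the $\kappa$-c.c.\ covering lemma is precisely the standard route. There is, however, a genuine gap in the bookkeeping. Your core lemma is stated only for $\gamma<\kappa$, and from it you correctly derive $\cf^V(\alpha)\geq\kappa\implies\cf^{V[G]}(\alpha)\geq\kappa$. You then assert that, combined with the trivial $\cf^{V[G]}(\alpha)\leq\cf^V(\alpha)$, a $V$-regular $\eta\geq\lambda$ stays regular in $V[G]$. For $\eta=\lambda$ this is a genuine squeeze, but for $\eta>\lambda$ it is a non sequitur: nothing said so far rules out $\lambda\leq\cf^{V[G]}(\eta)<\eta$, and the restricted lemma cannot reach a cofinal $f\colon\gamma\to\eta$ with $\gamma\geq\kappa$. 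Since the rest of your argument --- successor cardinals $\geq\lambda$ stay cardinals, limit cardinals $>\lambda$ stay cardinals as suprema, and the cofinality clause via ``the preserved regularity of $\cf^V(\alpha)$'' --- is all built on this regularity-preservation claim, the gap propagates through the whole ``bookkeeping'' paragraph.

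The fix is small and is, in effect, what Jech actually proves: drop the hypothesis $\gamma<\kappa$ from the covering lemma. Your antichain argument already gives $\abs{F(\xi)}\leq\abs{\bbP}$ for every $\xi$ with no restriction on $\gamma$, so $S\defeq\bigcup_{\xi<\gamma}F(\xi)$ lies in $V$ with $\abs{S}^V\leq\abs{\gamma}\cdot\abs{\bbP}$. Now if $\eta\geq\lambda=\abs{\bbP}^+$ is $V$-regular and $f\colon\gamma\to\eta$ is cofinal in $V[G]$ with $\gamma<\eta$, then $S$ is a $V$-set cofinal in $\eta$ with $\abs{S}^V\leq\max\bigl(\abs{\gamma},\abs{\bbP}\bigr)<\eta$, contradicting $V$-regularity; this handles $\gamma<\lambda$ and $\lambda\leq\gamma<\eta$ in one stroke and does not need $\kappa$ regular. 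With that corrected lemma the remainder of your reasoning goes through as written.
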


The following is an immediate corollary of Fact~\ref{fact:ccc-card-cof-agree}.

\begin{cor}
There is a cardinal $\lambda$ such that $V$, $M$, and $V[G]$ agree on cardinalities above $\lambda$ and cofinalities greater than $\lambda$.\qed
\end{cor}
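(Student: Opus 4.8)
The statement to be proved is the corollary that there is a cardinal $\lambda$ such that $V$, $M$, and $V[G]$ agree on cardinalities above $\lambda$ and on cofinalities greater than $\lambda$. The plan is to take the $\lambda$ furnished by Fact~\ref{fact:ccc-card-cof-agree}, perhaps enlarged slightly, and to squeeze $M$ between $V$ and $V[G]$: since $V\subseteq M\subseteq V[G]$, every injection or bijection witnessed in $V$ also lies in $M$, and every one witnessed in $M$ also lies in $V[G]$. Thus for ordinals $\alpha,\beta\geq\lambda$ we have the chain of implications $V\vDash\abs\alpha=\abs\beta \implies M\vDash\abs\alpha=\abs\beta \implies V[G]\vDash\abs\alpha=\abs\beta$, and Fact~\ref{fact:ccc-card-cof-agree} closes the loop by giving $V[G]\vDash\abs\alpha=\abs\beta \implies V\vDash\abs\alpha=\abs\beta$. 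Hence all three models agree on which pairs of ordinals $\geq\lambda$ are equinumerous, and in particular on which ordinals $\geq\lambda$ are cardinals.

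For cofinalities the argument is the same in spirit but needs a small amount of care about the direction of the inclusions. If $\alpha$ is an ordinal and $\eta>\lambda$ is a cardinal (in, say, $V[G]$, hence in all three models by the previous paragraph), then a cofinal map $\eta\to\alpha$ of order type $\eta$ existing in $V$ exists in $M$ and in $V[G]$; conversely a cofinal map witnessed in $V[G]$ of length $<\eta$ descends — no, it need not descend, so instead one argues: $\cf^{V[G]}(\alpha)$ is computed by Fact~\ref{fact:ccc-card-cof-agree} to equal $\cf^{V}(\alpha)$ whenever this common value is $\geq\lambda$, and since $\cf(\alpha)\leq\cf^M(\alpha)$-type inequalities run the right way (more cofinal sequences in a larger model means $\cf$ can only drop), one gets $\cf^{V[G]}(\alpha)\leq\cf^M(\alpha)\leq\cf^V(\alpha)=\cf^{V[G]}(\alpha)$, forcing equality throughout, provided the relevant values exceed $\lambda$. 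One should state the conclusion as: for any ordinal $\alpha$ with $\cf^{V[G]}(\alpha)>\lambda$, the three cofinalities coincide.

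The only real subtlety — and the step I expect to require the most care in writing up — is bookkeeping about \emph{which} model the hypothesis ``$>\lambda$'' or ``$\geq\lambda$'' is being read in, since a priori an ordinal could be a cardinal in $V[G]$ but not in $M$, or have different cofinality in the two. The resolution is precisely that once we are above $\lambda$ this cannot happen, but one must present the cardinal and cofinality agreements in the correct logical order (cardinalities first, then use that to make ``cardinal $\geq\lambda$'' unambiguous, then cofinalities) to avoid circularity. Beyond that the proof is a routine sandwich argument and is appropriately left as an immediate corollary; I would simply remark that one may take the same $\lambda$ as in Fact~\ref{fact:ccc-card-cof-agree} (or $\abs{\bbP}^{+}$) and that the inclusions $V\subseteq M\subseteq V[G]$ together with the two-way agreement between $V$ and $V[G]$ force $M$ to agree with both.
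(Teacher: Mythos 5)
Your proof is correct. The paper states this as an immediate corollary with no written proof (the statement carries its own \qed), and the sandwich argument you give — $V\subseteq M\subseteq V[G]$ forces both $|\alpha|=|\beta|$ and $\cf(\alpha)$ to be squeezed between the two outer models, which Fact~\ref{fact:ccc-card-cof-agree} makes coincide above $\lambda$ — is precisely the intended reasoning. Your care about the directions of the inclusions (cardinality relations pass upward, cofinalities can only drop as the model grows, so $\cf^{V[G]}(\alpha)\leq\cf^M(\alpha)\leq\cf^V(\alpha)$) and about disambiguating ``cardinal $\geq\lambda$'' by first settling agreement on cardinalities is exactly the right bookkeeping.
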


Let $\tau$ be the least cardinal such that $M$ and $V[G]$ agree on cardinalities at least $\tau$ and cofinalities greater than or equal to $\tau$. Finally, fix an injective seed $A$ for $M$ and let $\nu=\cabs{A}$.

Throughout this section, all sets and statements about sets are understood to be in the context of $M$ unless stated otherwise.

\subsubsection{An upper bound}

We first aim to create an upper bound on the Hartogs--Lindenbaum spectrum of $M$ by showing scenarios in which combinations of Hartogs and Lindenbaum numbers are not possible.

The proof of the following is effectively identical to the proof of Lemma~\ref{lem:cohen-hartlin-cf}.

\begin{lem}\label{lem:svc-hartlin-cf}
Suppose that $\aleph(X)=\lambda$ and $\aleph^*(X)=\kappa$. Then for all $\mu\in[\lambda,\kappa)$, $\cf(\mu)<\max(\nu^+,\tau)$.\qed
\end{lem}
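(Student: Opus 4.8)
\textbf{Proof plan for Lemma~\ref{lem:svc-hartlin-cf}.}

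The plan is to mimic the proof of Lemma~\ref{lem:cohen-hartlin-cf}, with $[A]^\lomega$ replaced by the injective seed $A$ and the role of ``$A$ is countable in $V[G]$'' replaced by the bound $\cabs{A}=\nu$. First, since $A$ is an injective seed for $M$, there is an injection $X\to A\times\eta$ for some ordinal $\eta$, so without loss of generality we may assume $X\subseteq A\times\eta$. For each $a\in A$, set $X_a=X\cap(\{a\}\times\eta)$; each $X_a$ is (in $M$) a set of ordinals, hence well-orderable.

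Now fix $\mu\in[\lambda,\kappa)$ and suppose for contradiction that $\cf(\mu)\geq\max(\nu^+,\tau)$. Since $\mu<\kappa=\aleph^*(X)$, there is a surjection $f\colon X\to\mu$, and then in $V[G]$ we have $\mu=\bigcup_{a\in A}f``X_a$, exhibiting $\mu$ as a union of $\abs{A}^{V[G]}=\abs{\nu}$-many sets. Because $\cf(\mu)\geq\nu^+$ — and cofinality is absolute here: $\cf(\mu)\geq\tau$ means $M$ and $V[G]$ agree on $\cf(\mu)$, so $\cf(\mu)>\nu$ in $V[G]$ as well — the union cannot be cofinal unless some piece is already cofinal, so there is $a\in A$ with $V[G]\vDash\abs{f``X_a}\geq\mu$, whence $V[G]\vDash\abs{X_a}\geq\mu$. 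Since $X_a$ is well-orderable in $M$ and (by the choice of $\tau$, using $\mu\geq\lambda\geq\tau$ if $\mu\geq\tau$; the small case $\mu<\tau$ is handled separately below) $M$ and $V[G]$ agree on the cardinality of the set of ordinals $X_a$, we get $M\vDash\abs{X_a}\geq\mu$. But $X_a$ injects into $X$, so this gives an injection $\mu\to X$ in $M$, contradicting $\mu\geq\aleph(X)$.

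The one technical subtlety — and the place where $\tau$ rather than $\nu^+$ alone enters — is ensuring the final absoluteness step and covering the case $\mu<\tau$. If $\mu\geq\tau$ then $M$ and $V[G]$ agree on $\abs{X_a}$ for the well-orderable $X_a$ with cardinality $\geq\mu\geq\tau$, so the argument above closes. If instead $\mu<\tau$, then since we assumed $\cf(\mu)\geq\tau$ we would need $\cf(\mu)\geq\tau>\mu\geq\cf(\mu)$, which is absurd; so the hypothesis $\cf(\mu)\geq\max(\nu^+,\tau)$ is already contradictory in that range and there is nothing to prove. Thus in all cases $\cf(\mu)<\max(\nu^+,\tau)$, as claimed. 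The main obstacle is purely bookkeeping: one must be careful that all cardinality and cofinality comparisons invoked actually fall within the range where $M$, $V[G]$ (and $V$) agree, which is exactly what the definitions of $\tau$ and $\nu$ were set up to guarantee.
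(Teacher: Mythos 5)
Your proof is correct and takes essentially the same approach as the paper, which gives no separate argument but states that the proof is ``effectively identical'' to that of Lemma~\ref{lem:cohen-hartlin-cf}; your adaptation --- replacing $[A]^\lomega$ by the injective seed $A$, replacing ``countable in $V[G]$'' by $\cabs{A}=\nu$, and invoking $\tau$ for the cardinality/cofinality absoluteness steps --- is precisely what is intended. Your observation that the case $\mu<\tau$ is vacuous (since then $\cf(\mu)\leq\mu<\tau\leq\max(\nu^+,\tau)$) correctly closes the bookkeeping.
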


\begin{cor}\label{cor:svc-oblate-cardinals}
If $\aleph(X)=\lambda\geq\max(\nu^+,\tau)$ then $\aleph^*(X)\leq\lambda^+$.
\end{cor}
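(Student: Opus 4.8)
The plan is to deduce this immediately from Lemma~\ref{lem:svc-hartlin-cf}, exactly as Corollary~\ref{cor:cohen} was deduced from Lemma~\ref{lem:cohen-hartlin-cf}. Suppose $\aleph(X)=\lambda\geq\max(\nu^+,\tau)$ and write $\kappa=\aleph^*(X)$; since always $\aleph(X)\leq\aleph^*(X)$ we have $\kappa\geq\lambda$, and we want to show $\kappa\leq\lambda^+$. If $\kappa=\lambda$ we are done, so suppose $\kappa>\lambda$, and aim to show $\kappa=\lambda^+$, i.e.\ that $\lambda^+\notin[\lambda,\kappa)$.

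The key observation is that $\lambda^+$ is a regular cardinal with $\cf(\lambda^+)=\lambda^+\geq\max(\nu^+,\tau)$, so in particular $\cf(\lambda^+)\not<\max(\nu^+,\tau)$. By Lemma~\ref{lem:svc-hartlin-cf}, every $\mu\in[\lambda,\kappa)$ satisfies $\cf(\mu)<\max(\nu^+,\tau)$; hence $\lambda^+$ cannot lie in $[\lambda,\kappa)$. Since $\lambda^+>\lambda$, this forces $\kappa\leq\lambda^+$, which is the desired conclusion.

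There is essentially no obstacle here: the corollary is a two-line consequence of the lemma, and the only thing to be careful about is that $\lambda^+$ is genuinely regular in $M$. This is automatic, since $\lambda^+$ is a successor cardinal (of $M$, as Greek letters denote well-ordered cardinals of $M$) and successor well-ordered cardinals are regular in $\ZF$ — the argument that a cofinal map from a smaller ordinal into $\lambda^+$ would give $\lambda^+\leq\lambda\cdot\lambda=\lambda$ uses only that each initial segment of $\lambda^+$ injects into $\lambda$, which requires no choice. So the proof is simply:

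\begin{proof}
We always have $\aleph(X)\leq\aleph^*(X)$, so $\aleph^*(X)\geq\lambda$. Suppose for contradiction that $\aleph^*(X)>\lambda^+$, so that $\lambda^+\in[\lambda,\aleph^*(X))$. Since $\lambda^+$ is a successor cardinal, it is regular, so $\cf(\lambda^+)=\lambda^+\geq\max(\nu^+,\tau)$, contradicting Lemma~\ref{lem:svc-hartlin-cf}. Hence $\aleph^*(X)\leq\lambda^+$.
\end{proof}
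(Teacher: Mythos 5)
The skeleton of your argument matches the paper's: exclude $\lambda^+$ from $[\lambda,\aleph^*(X))$ by applying Lemma~\ref{lem:svc-hartlin-cf} and a cofinality bound. But there is a genuine error in the justification that $\cf(\lambda^+)=\lambda^+$ in $M$. You assert that successor well-ordered cardinals are regular in $\ZF$, and that the usual proof ``requires no choice.'' That is false. The usual argument — given a cofinal $f\colon\alpha\to\lambda^+$ with $\alpha\leq\lambda$, conclude $|\lambda^+|\leq|\alpha|\cdot\lambda=\lambda$ — needs to choose, simultaneously for each $\beta<\alpha$, an injection $f(\beta)\to\lambda$; this uses a well-ordered family of choices and is exactly an instance of $\AC_\WO$. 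Indeed the paper itself flags this in a footnote in the proof of Theorem~\ref{thm:ac-wo-hartlin}, and it is well known that $\aleph_1$ can be singular in $\ZF$ (e.g.\ the Feferman--L\'evy model). So ``$\lambda^+$ is a successor, hence regular'' is not available here.

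The hypothesis $\lambda\geq\max(\nu^+,\tau)$ is not decorative: it is exactly what repairs this. Since $\lambda^+>\lambda\geq\tau$, and $\tau$ was chosen so that $M$ and $V[G]$ agree on cofinalities $\geq\tau$, one transfers regularity of $\lambda^+$ from the outer $\ZFC$ model $V[G]$ (where successor cardinals \emph{are} regular) to $M$. That is the step the paper's proof supplies and your proof is missing. With that substitution — ``$V[G]\vDash\cf(\lambda^+)=\lambda^+$ and $\lambda^+\geq\tau$, hence $M\vDash\cf(\lambda^+)=\lambda^+\geq\max(\nu^+,\tau)$'' — the rest of your argument goes through verbatim.
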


\begin{proof}
Since $\lambda\geq\tau$, and $V[G]\vDash\cf(\lambda^+)=\lambda^+$, we have ${M\vDash\cf(\lambda^+)=\lambda^+}$, and in particular $M\vDash\cf(\lambda^+)>\max(\nu^+,\tau)$. Therefore, by Lemma~\ref{lem:svc-hartlin-cf}, $\lambda^+\notin[\lambda,\aleph^*(X))$, so we must have that $\aleph^*(X)\leq\lambda^+$.
\end{proof}

\begin{lem}\label{lem:svc-hartlin-suc}
For all $X$, if $\aleph^*(X)>\max(\tau,\nu)$ then $\aleph^*(X)$ is a successor cardinal.
\end{lem}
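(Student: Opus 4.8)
The plan is to adapt, almost verbatim, the proof of the corresponding statement for Cohen's first model (the lemma immediately following Lemma~\ref{lem:cohen-hartlin-cf}), with the injective seed $A$ playing the role of $[A]^\lomega$ and the cardinal $\nu=\cabs{A}$ playing the role of $\aleph_0$. I would argue by contradiction: suppose $\aleph^*(X)=\kappa$ is a limit cardinal with $\kappa>\max(\tau,\nu)$, and exhibit a surjection $X\to\kappa$ in $M$, contradicting $\aleph^*(X)=\kappa$. Since $A$ is an injective seed there is an ordinal $\eta$ and an injection of $X$ into $A\times\eta$, and as $\aleph^*$ and the property of being a successor depend only on the cardinality of $X$, I may assume outright that $X\subseteq A\times\eta$. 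For $a\in A$ put $X_a=X\cap(\{a\}\times\eta)$; the projection to the second coordinate shows each $X_a$ is well-orderable in both $M$ and $V[G]$, and $X=\bigcup_{a\in A}X_a$.

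The key step is the claim that for every $\mu<\kappa$ there is some $a\in A$ with $\abs{X_a}>\mu$ in $M$. Given $\mu<\kappa$, set $\mu'=\max(\mu,\tau,\nu)$; then $\mu'<\kappa$ since $\kappa$ exceeds each of $\mu,\tau,\nu$, and, $\kappa$ being a limit cardinal, also $(\mu')^+<\kappa$. Because $\mu'\geq\tau$, the $M$-successor $(\mu')^+$ is also the successor of $\mu'$ in $V[G]$ (cardinals agree above $\tau$), hence a regular cardinal in $V[G]$, and as $(\mu')^+\geq\tau$ the model $M$ agrees it is regular; moreover $(\mu')^+>\nu$. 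Since $(\mu')^+<\kappa=\aleph^*(X)$, fix a surjection $f\colon X\to(\mu')^+$ in $M$. In $V[G]$ we then have $(\mu')^+=\bigcup_{a\in A}f``X_a$, a union of $\abs{A}=\nu$ many sets; as $\cf((\mu')^+)=(\mu')^+>\nu$ in $V[G]$, some $f``X_a$ has cardinality $(\mu')^+$ there, so $\abs{X_a}\geq(\mu')^+$ in $V[G]$. Since $X_a$ is well-orderable and its cardinality is at least $\tau$ in both models, the two computations agree, giving $\abs{X_a}\geq(\mu')^+>\mu$ in $M$.

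To conclude, let $\pi\colon X\to\eta$ be the second-coordinate projection. On each $X_a$ it is injective, so $\pi``X\subseteq\eta$ contains, for every $\mu<\kappa$, a subset ($\pi``X_a$) of cardinality greater than $\mu$; since $\pi``X$ is a well-orderable set of ordinals this forces $\abs{\pi``X}\geq\kappa$ in $M$. Any set of ordinals of cardinality at least $\kappa$ admits a surjection onto $\kappa$, so composing with $\pi$ produces a surjection $X\to\kappa$ in $M$, contradicting $\aleph^*(X)=\kappa$. Hence $\aleph^*(X)$ is a successor cardinal.

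The main obstacle is the bookkeeping of cardinals and cofinalities across the three models: one must verify carefully that $(\mu')^+$ genuinely remains a regular cardinal in $V[G]$ (using $V[G]\vDash\ZFC$ together with the agreement on cardinals above $\tau$), that a well-orderable set of cardinality at least $\tau$ has the same cardinality in $M$ and in $V[G]$ (an $M$-injection witnessing a smaller $M$-cardinality would persist to $V[G]$), and that $\abs{A}=\nu$ in $V[G]$ by definition of $\cabs{\cdot}$. The hypothesis $\aleph^*(X)>\max(\tau,\nu)$ is used precisely to ensure the interval $[\max(\tau,\nu),\kappa)$ is nonempty, so that regular cardinals of the form $(\mu')^+$ below $\kappa$ are available to drive the argument.
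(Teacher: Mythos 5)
Your proof is correct and takes essentially the same approach as the paper's: reduce to $X\subseteq A\times\eta$, slice into well-orderable fibres $X_a$, use a regular successor cardinal below $\kappa$ together with the cofinality argument in $V[G]$ to force some fibre to be large, and conclude that the second-coordinate projection surjects onto $\kappa$. The only cosmetic difference is that the paper lets $\mu$ range directly over the interval $(\max(\tau,\nu),\kappa)$ rather than passing to $\mu'=\max(\mu,\tau,\nu)$.
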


\begin{proof}
Let $\kappa>\max(\tau,\nu)$ be a limit cardinal and suppose that $X$ is such that $\aleph^*(X)\geq\kappa$. Then we must show that $\aleph^*(X)\geq\kappa^+$.

We may assume that $X\subseteq A\times\eta$ for some $\eta\in\Ord$. For each $a\in A$, let $X_a=X\cap(\{a\}\times\eta)$, so each $X_a$ is well-orderable. We aim to show that, if $\pi\colon X\to\eta$ is the projection of $X$ to its second co-ordinate, then $\abs{\pi``X}\geq\kappa$ and thus $\aleph^*(X)>\kappa$ as required.

Let $\mu\in(\max(\tau,\nu),\kappa)$. Since $\kappa>\max(\tau,\nu)$ is a limit cardinal, such a $\mu$ exists, and indeed $\sup\{\mu\in\Card\mid\max(\tau,\nu)<\mu<\kappa\}=\kappa$. Since $\mu<\kappa$ and $\kappa$ is a limit cardinal, $\mu^+<\kappa$, so there is a surjection $f\colon X\to\mu^+$. In $V[G]$, $\cf(\mu^+)=\mu^+>\nu$, so there is $a\in A$ such that $\abs{f``X_a}\geq\mu^+$. Since $\mu$ was taken arbitrarily, for all $\mu<\kappa$ there is $a\in A$ such that $\abs{X_a}\geq\mu$, and thus $\abs{\pi``X}\geq\kappa$.
\end{proof}

\begin{lem}\label{lem:svc-dc-bound}
Suppose that $M\vDash\DC_\mu$, and let $\lambda$ be a limit cardinal such that $\cf(\lambda)=\mu$. Then for all $X\in M$, $\aleph(X)\neq\lambda$.
\end{lem}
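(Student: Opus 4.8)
The plan is to argue by contradiction. Suppose $X\in M$ satisfies $\aleph(X)=\lambda$; working throughout in $M$, I will use $\DC_\mu$ to produce an injection $\lambda\to X$, which is impossible. The tool is a tree of partial injections whose levels are pinned to a cofinal sequence in $\lambda$.

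First I would fix a strictly increasing \emph{continuous} sequence $\tup{\lambda_i\mid i<\mu}$ cofinal in $\lambda$ (continuous meaning $\lambda_i=\sup_{k<i}\lambda_k$ at limit $i$); such a sequence exists in $\ZF$ since $\mu=\cf(\lambda)$, by a recursion on ordinals. Let $T$ consist of $\emptyset$ together with all injections $f\colon\lambda_i\to X$ for $i<\mu$, ordered by $\subseteq$. This is a tree: $\emptyset$ is least, and the $\subseteq$-predecessors of $f\colon\lambda_i\to X$ are exactly $\emptyset$ and the restrictions $f\res\lambda_k$ for $k\le i$, which form a well-ordered set. I would then verify $T$ is $\mu$-closed: an increasing chain of length $j<\mu$ has (ignoring $\emptyset$) domains $\lambda_{i_\xi}$ with the $i_\xi$ increasing and $i^*\defeq\sup_\xi i_\xi<\mu$ by regularity of $\mu$; its union is an injection into $X$ with domain $\sup_\xi\lambda_{i_\xi}=\lambda_{i^*}$ (here continuity handles the case in which the supremum $i^*$ is not attained), hence lies in $T$.

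Next I would show $T$ has no maximal element. A maximal $f\colon\lambda_i\to X$ with $f``\lambda_i=X$ would make $X$ well-orderable, so $\aleph(X)$ would be a successor cardinal, contradicting that $\lambda$ is a limit cardinal. If instead $f``\lambda_i\subsetneq X$, note that deleting the well-orderable set $f``\lambda_i$ (of size below $\lambda=\aleph(X)$) does not drop the Hartogs number: for any cardinal $\kappa$ with $\abs{\lambda_i}<\kappa<\lambda$ there is an injection $\kappa\to X$, and discarding the fewer than $\kappa$ points of its range lying in $f``\lambda_i$ and reindexing yields an injection $\kappa\to X\setminus f``\lambda_i$; thus $\aleph(X\setminus f``\lambda_i)\ge\lambda$. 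In particular there is an injection of the interval $[\lambda_i,\lambda_{i+1})$ into $X\setminus f``\lambda_i$, and gluing it onto $f$ gives an injection $\lambda_{i+1}\to X$ in $T$ properly extending $f$ — contradiction.

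Since $T$ is $\mu$-closed with no maximal element, $\DC_\mu$ yields a chain $C\subseteq T$ of order type $\mu$. Enumerating $C$ increasingly as $\tup{f_\xi\mid\xi<\mu}$, at most the bottom element is $\emptyset$, so for all $\xi$ in a final segment of $\mu$ we have $f_\xi\colon\lambda_{i_\xi}\to X$ with the $i_\xi$ strictly increasing; being a strictly increasing $\mu$-sequence of ordinals below the regular cardinal $\mu$, they are cofinal in $\mu$, so $\sup_\xi\lambda_{i_\xi}=\lambda$. Hence $\bigcup_{\xi<\mu}f_\xi$ is an injection with domain $\lambda$, i.e.\ an injection $\lambda\to X$, contradicting $\aleph(X)=\lambda$. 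The main obstacle is getting the tree right: the obvious tree of all injections with domain below $\lambda$ is still $\mu$-closed and still has no maximal element, but a chain of order type $\mu$ through it need not have domains cofinal in $\lambda$; tying the levels to a continuous cofinal sequence is precisely what forces a height-$\mu$ chain to exhaust $\lambda$. Everything else — $\mu$-closedness and the non-existence of maximal nodes — is routine given that removing a small well-orderable set cannot lower a Hartogs number and that $\lambda$ being a limit cardinal excludes the well-orderable case.
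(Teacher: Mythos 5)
Your proof is correct and follows essentially the same strategy as the paper's: build a tree of injections $\lambda_i\to X$ keyed to a continuous cofinal $\mu$-sequence, verify $\mu$-closure, rule out maximal nodes, and apply $\DC_\mu$ to get a chain of order type $\mu$ whose union (by regularity of $\mu$ and continuity of the sequence) is an injection $\lambda\to X$, a contradiction. The only cosmetic differences are that you rule out maximal nodes up front (split into the well-orderable case, which clashes with $\lambda$ being a limit cardinal, and the case where removing the small well-orderable image $f``\lambda_i$ cannot lower the Hartogs number) whereas the paper defers the extension argument to after invoking $\DC_\mu$; your organisation is if anything slightly tidier, but the ideas are the same.
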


\begin{proof}
Suppose that $\aleph(X)\geq\lambda$, and let $\tup{\lambda_\alpha\mid\alpha<\mu}$ be a continuous strictly increasing cofinal sequence in $\lambda$ with $\lambda_0=0$. Define the tree $T$ to be the set $\bigcup_{\alpha<\mu}\Inj{\lambda_\alpha}{X}$, with $f\leq g$ if $f\subseteq g$. Since $\aleph(X)\geq\lambda$, $T$ has no maximal nodes and, by taking unions, we see that $T$ is $\mu$-closed. By $\DC_\mu$, $T$ has a maximal branch $b$. Let $f=\bigcup b$. Then $f$ is an injection $\kappa\to X$ for some $\kappa$, and $\kappa$ is either $\lambda_\alpha$ for some $\alpha<\mu$ or is $\lambda$. In the former case, we may extend $f$ to a function from $\lambda_{\alpha+1}$ by inspecting any node at level $\alpha+1$ ($\aleph(X)>\lambda_{\alpha+1}$ so it is nonempty) and removing any duplicate entries. We'll remove fewer than $\lambda_{\alpha+1}$ entries, so adjoining the two will still yield an extension in $T$. This contradicts the maximality of $b$. Therefore, we are in the latter case and $f$ is an injection $\lambda\to X$, so $\aleph(X)>\lambda$.
\end{proof}

Recall that $\lambda_\DC$ is the least cardinal $\lambda$ such that $\DC_\lambda$ does not hold.

\begin{prop}\label{prop:svc-upper-bound}
If $\aleph(X)=\lambda$ and $\aleph^*(X)=\kappa$, then one of the following holds:
\begin{enumerate}[label=\textup{(\alph*)}]
\item\label{prop:svc-upper-bound;case:bounded} $\kappa\leq\max(\nu^+,\tau^+)$;
\item\label{prop:svc-upper-bound;case:suc} $\kappa=\lambda$ are successors; or
\item\label{prop:svc-upper-bound;case:cf} $\kappa=\lambda^+$ and $\lambda_{\DC}\leq\cf(\lambda)<\max(\nu^+,\tau)$.
\end{enumerate}
\end{prop}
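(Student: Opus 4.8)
The approach is to split on whether alternative \ref{prop:svc-upper-bound;case:bounded} holds, so suppose from now on that $\kappa>\max(\nu^+,\tau^+)$; the goal is to derive \ref{prop:svc-upper-bound;case:suc} or \ref{prop:svc-upper-bound;case:cf}. Since $A$ is an injective seed, fix $\eta\in\Ord$ and an injection of $X$ into $A\times\eta$, and replace $X$ by its image; this leaves $\aleph(X)$ and $\aleph^*(X)$ unchanged, so we may assume $X\subseteq A\times\eta$. For $a\in A$ put $X_a=X\cap(\{a\}\times\eta)$: each $X_a$ is well-orderable (it injects into $\eta$) and, injecting into $X$, has $M$-cardinality $<\aleph(X)=\lambda$.

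Step one is to show $\lambda\geq\max(\nu^+,\tau)$. Suppose otherwise. Working in $V[G]$, since $\abs A=\nu$ and $\abs{X_a}\leq\abs{X_a}^M<\lambda$ for each $a$, we get $\abs X\leq\nu\cdot\lambda=\max(\nu,\lambda)<\max(\nu^+,\tau)$, and hence every surjective image of $X$ has $V[G]$-cardinality $<\max(\nu^+,\tau)$. But $\max(\nu^+,\tau)$ is a cardinal of $V[G]$: it is either $\tau$, which is a $V[G]$-cardinal by choice of $\tau$, or $\nu^+\geq\tau$, which is a cardinal of $M$ and hence, by the agreement above $\tau$, of $V[G]$. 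Therefore there is no surjection from $X$ onto $\max(\nu^+,\tau)$, so $\kappa=\aleph^*(X)\leq\max(\nu^+,\tau)\leq\max(\nu^+,\tau^+)$, contradicting the standing assumption. Thus $\lambda\geq\max(\nu^+,\tau)$.

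Now Corollary~\ref{cor:svc-oblate-cardinals} gives $\aleph^*(X)\leq\lambda^+$, so $\kappa\in\{\lambda,\lambda^+\}$. If $\kappa=\lambda$, then $\kappa>\max(\nu^+,\tau^+)>\max(\nu,\tau)$, so Lemma~\ref{lem:svc-hartlin-suc} makes $\kappa$ a successor cardinal, whence so is $\lambda=\kappa$ and \ref{prop:svc-upper-bound;case:suc} holds. Suppose instead $\kappa=\lambda^+$. Applying Lemma~\ref{lem:svc-hartlin-cf} to $\lambda\in[\lambda,\kappa)$ gives $\cf(\lambda)<\max(\nu^+,\tau)$, which is the upper bound in \ref{prop:svc-upper-bound;case:cf}. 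Moreover $\lambda$ must be a limit cardinal: since $\lambda\geq\tau$, $M$ and $V[G]$ agree on cardinals $\geq\tau$ and on cofinalities $\geq\tau$, from which one checks that a successor cardinal of $M$ that is $\geq\tau$ is a successor cardinal of $V[G]$ and therefore regular in $M$; were $\lambda$ such a cardinal we would have $\cf(\lambda)=\lambda\geq\max(\nu^+,\tau)$, contradicting the line above. Finally, if $\cf(\lambda)<\lambda_\DC$ then $\DC_{\cf(\lambda)}$ holds, and Lemma~\ref{lem:svc-dc-bound} applied with $\mu=\cf(\lambda)$ to the limit cardinal $\lambda$ yields that no set has Hartogs number $\lambda$ — contradicting $\aleph(X)=\lambda$. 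Hence $\lambda_\DC\leq\cf(\lambda)$, and \ref{prop:svc-upper-bound;case:cf} holds.

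The main obstacle is Step one: the earlier results (Lemma~\ref{lem:svc-hartlin-cf}, Corollary~\ref{cor:svc-oblate-cardinals}, Lemma~\ref{lem:svc-hartlin-suc}) only bite once $\lambda\geq\max(\nu^+,\tau)$, so the small-$\lambda$ regime must be excluded by the explicit $V[G]$-cardinality estimate above. The remaining technical care lies in tracking cardinalities and cofinalities across $V\subseteq M\subseteq V[G]$ near the threshold $\tau$ — in particular checking that $\max(\nu^+,\tau)$ is a $V[G]$-cardinal and that $M$-successor cardinals that are $\geq\tau$ are regular in $M$.
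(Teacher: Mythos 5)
Your proof is correct; it reaches the same conclusion and leans on the same cluster of lemmas, but the decomposition is noticeably different. You first establish $\lambda\geq\max(\nu^+,\tau)$ by an explicit $V[G]$-cardinality estimate on $X=\bigcup_{a\in A}X_a$ (each fibre is well-orderable of size $<\lambda$, giving $\abs{X}^{V[G]}\leq\max(\nu,\lambda)<\max(\nu^+,\tau)$, which caps $\aleph^*(X)$ below the $V[G]$-cardinal $\max(\nu^+,\tau)$); with that in hand you invoke Corollary~\ref{cor:svc-oblate-cardinals} to get $\kappa\leq\lambda^+$ and then split. The paper avoids this step entirely: it applies Lemma~\ref{lem:svc-hartlin-suc} at the outset to conclude $\kappa$ is a successor, and then rules out $\kappa>\lambda^+$ by feeding the successor cardinal $\mu=\max(\lambda^+,\nu^+,\tau^+)\in[\lambda,\kappa)$ into Lemma~\ref{lem:svc-hartlin-cf} — $\cf(\mu)=\mu\geq\max(\nu^+,\tau)$ by the agreement above $\tau$, contradicting $\cf(\mu)<\max(\nu^+,\tau)$ — after which $\lambda\geq\max(\nu^+,\tau^+)$ falls out automatically from $\kappa=\lambda^+>\max(\nu^+,\tau^+)$. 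What your route buys is a self-contained treatment of the small-$\lambda$ regime without the successor-cardinal $\mu$ trick; what the paper's buys is brevity, since it never needs the direct $V[G]$-size bound. Your closing bookkeeping (that $\lambda$ must be a limit, since an $M$-successor $\geq\tau$ is a $V[G]$-successor and hence regular in $M$; that $\cf(\lambda)<\lambda_\DC$ would contradict $\aleph(X)=\lambda$ via Lemma~\ref{lem:svc-dc-bound}) is exactly what the paper compresses into its final sentence, and it is sound.
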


\begin{proof}
If $\kappa\leq\max(\nu^+,\tau^+)$ then we are in Case~\ref*{prop:svc-upper-bound;case:bounded}, so we may assume otherwise. By Lemma~\ref{lem:svc-hartlin-suc}, $\kappa$ is a successor cardinal. If $\lambda=\kappa$, then we are in Case~\ref*{prop:svc-upper-bound;case:suc}, so assume that $\lambda<\kappa$. Suppose for contradiction that $\kappa>\lambda^+$. Then setting $\mu=\max(\lambda^+,\nu^+,\tau^+)$, we have $\mu\in[\lambda,\kappa)$, so by Lemma~\ref{lem:svc-hartlin-cf} $\cf(\mu)<\max(\nu^+,\tau)$. However, $V[G]\vDash\cf(\mu)=\mu\geq\tau$, since $\mu$ is a successor cardinal, and so we have $M\vDash\cf(\mu)=\mu\geq\max(\nu^+,\tau)$, contradicting that $\cf(\mu)<\max(\nu^+,\tau)$. Therefore, $\kappa=\lambda^+$. Finally, we have that $\lambda_{\DC}\leq\cf(\lambda)<\max(\nu^+,\tau)$ by Lemma~\ref{lem:svc-hartlin-cf} and Lemma~\ref{lem:svc-dc-bound}.
\end{proof}

The underlying pattern of Proposition~\ref{prop:svc-upper-bound} is that once we have dealt with the chaos of $\aleph^*(X)\leq\max(\nu^+,\tau^+)$ and the inevitability of $\aleph(X)=\aleph^*(X)=\lambda^+$, all that we have are sets $X$ with $\aleph(X)=\lambda$ and $\aleph^*(X)=\lambda^+$ for some cardinal $\lambda$. Indeed, this scenario is the only one in which we may have an eccentric set of arbitrarily large Hartogs or Lindenbaum number.

\begin{defn}[Oblate cardinal]
An \emph{oblate cardinal} is a cardinal $\lambda$ such that there is a set $X$ with $\aleph(X)=\lambda$ and $\aleph^*(X)=\lambda^+$.
\end{defn}

As we have observed, the only candidates for oblate cardinals that are at least as large as $\max(\nu^+,\tau)$ are those singular cardinals $\lambda$ with $\cf(\lambda)\in[\lambda_{\DC},\max(\nu^+,\tau))$. However, this does not tell us which of those cardinals will be oblate. Fortunately, when we begin to produce a lower bound for the spectrum, we will have very strong results for transferring eccentricity that force cardinals to be oblate.

Recall that $\lambda_W$ is the least cardinal $\lambda$ such that $W_\lambda$ does not hold, and that $\lambda_W^*$ is defined analogously for $W_\lambda^*$. Combining Proposition~\ref{prop:svc-comparability-bound} and Proposition~\ref{prop:svc-upper-bound}, we produce an upper bound of $\Spec_\aleph(M)$ in three parts:

\begin{enumerate}
\item The successors, $\Suc=\{\tup{\lambda^+,\lambda^+}\mid\lambda\in\Card\}$. In fact, we always have that $\Suc\subseteq\Spec_\aleph(M)$ since $\aleph(\lambda)=\aleph^*(\lambda)=\lambda^+$ for all cardinals $\lambda$.
\item The `bounded chaos', those $\tup{\lambda,\kappa}$ where $\lambda_W\leq\lambda\leq\kappa\leq\max(\nu^+,\tau^+)$ and $\lambda_W^*\leq\kappa$. If we have $\aleph(X)^+<\aleph^*(X)$ for some set $X$, then it must appear here.
\item The oblate cardinals, those $\tup{\lambda,\lambda^+}$ with $\cf(\lambda)\in[\lambda_{\DC},\max(\nu^+,\tau))$.
\end{enumerate}

\subsubsection{A lower bound}

Having just seen the upper bound of the spectrum of a model of $\SVC$, we now wish to exhibit a lower bound, which we shall construct entirely through controlling oblate cardinals. By Theorem~\ref{thm:ac-wo-hartlin}, if any eccentric set exists then we must have eccentric sets of arbitrarily large Hartogs or Lindenbaum number. However, by Proposition~\ref{prop:svc-upper-bound}, if $\aleph(X)\geq\max(\nu^+,\tau)$ and $X$ is eccentric, then we must have that $\aleph(X)$ is an oblate cardinal. Therefore, there must be a proper class of oblate cardinals. In this section we will explore methods of lifting oblate cardinals to larger oblate cardinals with the same cofinality. We do this in two ways: The first lifts any set $B$ with $\aleph(B)$ a limit cardinal, and the other lifts any set $B$ with $\aleph(B)<\aleph^*(B)$.

\begin{lem}\label{lem:svc-hartlin-construction}
Let $B$ be such that $\aleph(B)=\mu$ a limit cardinal, $\lambda$ a cardinal with $\lambda\geq\nu+\tau+\cabs{B^{{<}\mu}}^+$, and $\tup{\delta_\alpha\mid\alpha<\mu}$ a strictly increasing sequence of ordinals such that ${\sup\{\delta_\alpha\mid\alpha<\mu\}=\lambda}$.\footnote{Such a sequence exists if and only if $\mu\leq\lambda$ and $\cf(\lambda)=\cf(\mu)$.} Then there is a set $X$ such that $\aleph(X)=\lambda$ and $\aleph^*(X)=\lambda^+$.
\end{lem}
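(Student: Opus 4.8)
The plan is to adapt the construction of Lemma~\ref{lem:cohen-lower-bound}, using the approximations $\Inj{\alpha}{B}$ for $\alpha<\mu$ in place of the $\Inj{n}{A}$; these are all non-empty precisely because $\mu=\aleph(B)$. Explicitly, for each $\alpha<\mu$ set $X_\alpha=\delta_\alpha\times\Inj{\alpha}{B}$, and let $X=\bigcup_{\alpha<\mu}X_\alpha$; note the $X_\alpha$ are pairwise disjoint, since the second coordinate of a point records the domain $\alpha$ of the injection. Two bounds are immediate. First, $\aleph(X)\geq\lambda$: given $\xi<\lambda$ there is $\alpha<\mu$ with $\xi\leq\delta_\alpha$, and fixing any $c\in\Inj{\alpha}{B}$ the map $\zeta\mapsto\tup{\zeta,c}$ injects $\xi$ into $X$. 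Second, the projection of $X$ onto its first coordinate is a surjection onto $\bigcup_{\alpha<\mu}\delta_\alpha=\lambda$, so $\aleph^*(X)\geq\lambda^+$.

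The heart of the argument is that there is no injection $f\colon\lambda\to X$. Suppose there were one; since $M\subseteq V[G]$ we may compute cardinalities in $V[G]$. Because $\lambda\geq\tau$, $\lambda$ is still a cardinal in $V[G]$, so $\abs{\delta_\alpha}<\lambda$ there for each $\alpha<\mu$; and because $\Inj{\alpha}{B}\subseteq B^{{<}\mu}$ we have $\abs{\Inj{\alpha}{B}}\leq\cabs{B^{{<}\mu}}<\lambda$ in $V[G]$, using $\lambda\geq\cabs{B^{{<}\mu}}^+$. Since also $\abs{\alpha}\leq\abs{\delta_\alpha}$ (as $\tup{\delta_\beta\mid\beta<\mu}$ is strictly increasing), this gives $\abs{\bigcup_{\beta\leq\alpha}X_\beta}<\lambda$ in $V[G]$ for every $\alpha<\mu$. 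As $\abs{f``\lambda}=\lambda$ in $V[G]$, $f``\lambda$ is not contained in any $\bigcup_{\beta\leq\alpha}X_\beta$, so the set $S=\{\alpha<\mu\mid f``\lambda\cap X_\alpha\neq\emptyset\}$ is cofinal in $\mu$. For $\alpha\in S$, let $\tup{\zeta_\alpha,c_\alpha}$ be the $f$-least element of $f``\lambda\cap X_\alpha$ — a definable choice, using the well-order that $f$ transports from $\lambda$ — so that $c_\alpha\in\Inj{\alpha}{B}$.

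Now I would derive a contradiction with $\aleph(B)=\mu$. The set $R=\bigcup_{\alpha\in S}c_\alpha``\alpha$ is a well-orderable subset of $B$: well-order $R$ lexicographically, ordering $b\in R$ first by the least $\alpha\in S$ with $b\in c_\alpha``\alpha$ and then by $c_\alpha^{-1}(b)$. Since $\abs{R}\geq\abs{c_\alpha``\alpha}=\abs{\alpha}$ for every $\alpha\in S$ and $S$ is cofinal in the limit cardinal $\mu$, we get $\abs{R}\geq\mu$ (a well-orderable set of size at least $\kappa$ for all cardinals $\kappa<\mu$ has size at least $\mu$, as $\mu$ is a limit cardinal). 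Thus there is an injection $\mu\to B$, contradicting $\aleph(B)=\mu$; hence no injection $\lambda\to X$ exists and $\aleph(X)=\lambda$. Finally, applying Proposition~\ref{prop:svc-upper-bound} to $X$ and writing $\kappa=\aleph^*(X)\geq\lambda^+$: case~\ref*{prop:svc-upper-bound;case:suc} is excluded as $\kappa>\lambda$, while $\lambda\geq\nu+\tau+\cabs{B^{{<}\mu}}^+\geq\max(\nu,\tau)$ gives $\lambda^+\geq\max(\nu^+,\tau^+)$, so each of cases~\ref*{prop:svc-upper-bound;case:bounded} and~\ref*{prop:svc-upper-bound;case:cf} forces $\kappa=\lambda^+$.

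I expect the main obstacle to be the cardinal-arithmetic bookkeeping showing $\abs{\bigcup_{\beta\leq\alpha}X_\beta}<\lambda$ in $V[G]$, and in particular checking that the three summands of the hypothesis $\lambda\geq\nu+\tau+\cabs{B^{{<}\mu}}^+$ each pull their weight: $\tau$ keeps $\lambda$ a cardinal in $V[G]$, $\cabs{B^{{<}\mu}}^+$ dominates every fibre $X_\alpha$, and $\nu$ (together with $\tau$) is what lets Proposition~\ref{prop:svc-upper-bound} pin down $\aleph^*(X)$. A secondary point requiring care is the choiceless assembly of $R$: one must verify that its well-order is genuinely definable from $f$ and the $\delta_\alpha$, and that its order type reaches $\mu$.
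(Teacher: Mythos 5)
Your proof is correct and takes essentially the same approach as the paper's: the same set $X=\bigcup_{\alpha<\mu}\delta_\alpha\times\Inj{\alpha}{B}$, the same $V[G]$-cardinality bound on the bounded pieces, the same assembly of an injection $\mu\to B$ from the images of a hypothetical $f\colon\lambda\to X$ (which the paper dispatches with ``by our usual construction''), and the same appeal to Proposition~\ref{prop:svc-upper-bound} to finish.
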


\begin{proof}
Let $\tup{\delta_\alpha\mid\alpha<\mu}$ be the sequence described in the proposition. For each $\beta<\mu$, let $X_\beta=\bigcup_{\alpha<\delta}\delta_\alpha\times\Inj{\alpha}{B}$, and let $X=\bigcup\{X_\beta\mid\beta<\mu\}$. Note that since $\aleph(B)=\mu$, $\Inj{\alpha}{B}\neq\emptyset$ for all $\alpha<\mu$. Note also that for all $\beta<\mu$, in $V[G]$ we can embed $X_\beta$ into $\delta_\beta\times B^{{<}\mu}$, so $\cabs{X_\beta}\leq\cabs{\delta_\beta}\cdot\cabs{B^{{<}\mu}}<\lambda$. Therefore, there cannot be an injection $\lambda\to X_\beta$ for any $\beta<\mu$.

For all $\alpha<\mu$, $\delta_\alpha$ embeds into $X$, so $\aleph(X)\geq\lambda$. Furthermore, projection to its first co-ordinate begets a surjection $X\to\lambda$, so $\aleph^\ast(X)\geq\lambda^+$. Suppose that there were an injection $\lambda\to X$. We have already seen that the image of this injection cannot be contained in any $X_\beta$, so it must be unbounded. By our usual construction, noting that $\mu$ is a limit, this gives us an injection $\mu\to B$, contradicting $\aleph(B)=\mu$. Therefore, $\aleph(X)=\lambda$ as required. By Proposition~\ref{prop:svc-upper-bound}, $\aleph^\ast(X)=\lambda^+$.
\end{proof}

\begin{lem}\label{lem:svc-hartlin-upscale}
Let $B$ be such that $\aleph(B)=\mu$ and $\aleph^*(B)>\mu$, $\lambda$ a cardinal with $\lambda\geq\nu+\tau+\cabs{B}^+$, and $\tup{\lambda_\alpha\mid\alpha<\mu}$ a strictly increasing sequence such that ${\sup\{\lambda_\alpha\mid\alpha<\mu\}=\lambda}$. Then there is a set $X$ such that $\aleph(X)=\lambda$ and $\aleph^*(X)=\lambda^+$.
\end{lem}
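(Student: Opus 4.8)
The plan is to mimic the upwards transfer construction, but instead of using $\Inj{\alpha}{B}$ as the "growing" component (which is what Lemma~\ref{lem:svc-hartlin-construction} does when $\aleph(B)$ is a limit), I exploit the surjective slack in $B$: since $\aleph^*(B)>\mu$, there is a surjection $g\colon B\to\mu$. The idea is to build a set $X$ whose $\alpha$-th "block" records a partial choice function for the fibres of $g$ together with a large well-ordered part $\lambda_\alpha$, so that an injection $\lambda\to X$ (which must be unbounded, since each block is too small) manufactures a choice function on cofinally many fibres of $g$, hence — because $\mu$ is itself a limit or at least because $g$ is surjective onto $\mu$ — an injection $\mu\to B$, contradicting $\aleph(B)=\mu$.

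Concretely, fix a surjection $g\colon B\to\mu$ and for $\alpha<\mu$ let $B_\alpha=g^{-1}(\{\alpha\})\neq\emptyset$ and $Z_\alpha=\prod_{\beta<\alpha}B_\beta$. I would first check $Z_\alpha\neq\emptyset$ for each $\alpha<\mu$: this needs $\AC$ for the well-ordered family $\tup{B_\beta\mid\beta<\alpha}$, which we do not have outright, so here one should instead use that $\aleph(B)=\mu$ forces, for each $\alpha<\mu$, an injection $\alpha\to B$, and combine with $g$ to get partial selectors — or, more cleanly, simply take $X_\alpha=\lambda_\alpha\times\Inj{\alpha}{B}$ exactly as in Lemma~\ref{lem:svc-hartlin-construction}'s blocks and observe that the \emph{only} place $\aleph^*(B)>\mu$ is needed is to guarantee $\aleph(X)$ is not a \emph{successor} below $\lambda$ — but in fact the construction of Lemma~\ref{lem:svc-hartlin-construction} already handles any $B$ with $\aleph(B)=\mu$ a limit cardinal, so the real content here is the case $\mu$ a successor. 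So the substantive step is: when $\mu=\rho^+$, use the surjection $g\colon B\to\rho^+$ to define $X=\bigcup_{\alpha<\rho^+}\bigl(\lambda_\alpha\times \Inj{\alpha}{B}\bigr)$, and show that an unbounded injection $f\colon\lambda\to X$ yields, by projection to the second coordinate and concatenation/de-duplication along the cofinal set of levels hit, an injection $\rho^+\to B$, contradicting $\aleph(B)=\mu$. As in the earlier lemma, each block $X_\alpha$ embeds (in $V[G]$) into $\delta_\alpha\times B^{{<}\mu}$... — no, more carefully into $\lambda_\alpha\times B^{{<}\mu}$; but we only assumed $\lambda\geq\nu+\tau+\cabs{B}^+$, not $\cabs{B^{{<}\mu}}^+$, so the bounding of $\cabs{X_\alpha}$ below $\lambda$ must instead go through: $\Inj{\alpha}{B}$ injects into ${}^{\alpha}B$, and $\cabs{{}^{\alpha}B}$ is controlled by $\cabs{B}$ and $\cabs{\alpha}<\lambda$ together with $2^{\cabs{B}}$-type bounds in $V[G]$ where $\AC$ holds; one picks $\lambda$ large enough (above $\cabs{B}^+$ is claimed to suffice, so presumably $\alpha<\mu\leq\lambda$ and $\cabs{B}^{+}\leq\lambda$ give $\cabs{X_\alpha}<\lambda$ via $\cabs{X_\alpha}\leq\cabs{\lambda_\alpha}\cdot 2^{\cabs{\alpha}+\cabs{B}}$ — this needs $\mu\leq\cabs{B}^+$, which holds since $\aleph(B)=\mu$ implies $\mu$ injects-complement-style below $\cabs{B}^+$).

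Once $\aleph(X)=\lambda$ is established, $\aleph^*(X)\geq\lambda^+$ is immediate by projecting $X$ onto $\lambda$ (the first coordinates are cofinal in $\lambda$ since the $\lambda_\alpha$ are), and then $\aleph^*(X)=\lambda^+$ follows from Proposition~\ref{prop:svc-upper-bound}: we have $\aleph(X)=\lambda\geq\nu^+$ by hypothesis, so Corollary~\ref{cor:svc-oblate-cardinals} gives $\aleph^*(X)\leq\lambda^+$, forcing equality. The main obstacle I anticipate is the bookkeeping in the cardinal-arithmetic step showing $\cabs{X_\alpha}<\lambda$ under only the stated hypothesis $\lambda\geq\nu+\tau+\cabs{B}^+$ (as opposed to the stronger $\cabs{B^{{<}\mu}}^+$ bound of the previous lemma): this should work because $\aleph(B)=\mu$ pins $\mu$ just above $\cabs{B}$ in $V[G]$, so ${}^{{<}\mu}B$ has $V[G]$-cardinality at most $2^{\cabs{B}}$, and one absorbs this into the $\cabs{B}^+$ term — but one must phrase it so that it genuinely only uses $\cabs{B}^+\le\lambda$. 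The second delicate point is extracting the injection $\mu\to B$ from an unbounded injection $\lambda\to X$ when $\mu$ is a successor: here the cofinal set of levels $\alpha<\mu$ at which $f``\lambda$ meets $X_\alpha$ need not be all of $\mu$, but it is cofinal, and concatenating the associated injections $\alpha\to B$ in order of level (deleting repeats) produces an injection from some ordinal $\geq\mu$ into $B$, which is the contradiction.
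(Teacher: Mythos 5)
Your proposal abandons the construction that actually works and then cannot make the alternative go through. In the first paragraph you consider the fibres $B_\alpha=g^{-1}(\{\alpha\})$ of a surjection $g\colon B\to\mu$, which is exactly the paper's route, but you then worry about nonemptiness of the products $Z_\alpha=\prod_{\beta<\alpha}B_\beta$ and back away. No products are needed: the paper's set is simply $X=\bigcup_{\alpha<\mu}\lambda_\alpha\times B_\alpha$, and each block is nonempty because $g$ is a surjection, so $B_\alpha\neq\emptyset$ outright. You have conflated this with the upwards transfer construction $D(X,\kappa)$ from Section~\ref{s:ac-wo}, which does use products $Y_\alpha=\prod_{\beta<\alpha}X_\beta$; that is a different device. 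The point of using the fibres is twofold and you lose both advantages by switching to $\Inj{\alpha}{B}$: (i) since $B_\alpha\subseteq B$, each block satisfies $\cabs{\lambda_\alpha\times B_\alpha}\leq\cabs{\lambda_\alpha}\cdot\cabs{B}<\lambda$ using only $\lambda>\cabs{B}$, with no exponentiation; and (ii) the $B_\alpha$ are pairwise disjoint, so selecting one second coordinate from $\mu$ of the blocks (as the well-order of an injection $\lambda\to X$ lets you do) is \emph{automatically} an injection into $B$ — no concatenation or de-duplication is required.

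Your fallback, taking $X_\alpha=\lambda_\alpha\times\Inj{\alpha}{B}$, breaks precisely the step where the hypothesis $\lambda\geq\cabs{B}^+$ is supposed to bite. In $V[G]$, $\cabs{\Inj{\alpha}{B}}$ is bounded only by $\cabs{B}^{\cabs{\alpha}}$, which can be as large as $2^{\cabs{B}}$ and need not be below $\cabs{B}^+$ (nor below $\lambda$); absorbing $2^{\cabs{B}}$ ``into the $\cabs{B}^+$ term'' is exactly the kind of $\mathsf{CH}$-flavoured assumption you are not granted. That is why Lemma~\ref{lem:svc-hartlin-construction} carries the stronger hypothesis $\lambda\geq\cabs{B^{{<}\mu}}^+$, and it is precisely the improvement to $\cabs{B}^+$ that Lemma~\ref{lem:svc-hartlin-upscale} is meant to deliver (this matters for the $\Omega_\mu$ bounds in Proposition~\ref{prop:svc-upwards-c}); the lemma is therefore not just about successor $\mu$, contrary to your reduction. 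Finally, your ``concatenate and delete repeats'' argument for $\mu$ a successor does not close the gap even on its own terms: concatenating injections $g_\alpha\colon\alpha\to B$ over a cofinal $S\subseteq\rho^+$ only produces an injection from an ordinal of cardinality $\rho$, not $\rho^+$, since every $\alpha\in S$ has $\abs{\alpha}\leq\rho$. The disjoint-fibre construction sidesteps this entirely.
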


\begin{proof}
Let $\tup{\lambda_\alpha\mid\alpha<\mu}$ be the sequence described in the proposition and $f\colon B\to\mu$ a surjection. For each $\alpha<\mu$, let $Y_\alpha=f^{-1}(\alpha)\neq\emptyset$, and let $X\defeq\bigcup_{\alpha<\mu}\lambda_\alpha\times Y_\alpha$. By the usual techniques, noting that $\lambda>\cabs{B}$, we see that $\aleph(X)\geq\lambda$ but any injection $\lambda\to X$ would beget an injection $\mu\to B$, contradicting $\aleph(B)=\mu$. Furthermore, $\aleph^\ast(X)\geq\lambda^+$ and by Proposition~\ref{prop:svc-upper-bound}, $\aleph^\ast(X)=\lambda^+$.
\end{proof}

\begin{prop}\label{prop:svc-upwards-c}
There is a set $C\subseteq[\lambda_{\DC},\max(\nu^+,\tau))$ and a cardinal $\Omega$ such that for all $\lambda\geq\Omega$, $\lambda$ is an oblate cardinal if and only if $\cf(\lambda)\in C$.
\end{prop}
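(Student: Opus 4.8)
The plan is to let $C$ record precisely the cofinalities occurring among the ``large'' oblate cardinals, and then to extract a uniform threshold $\Omega$ by one appeal to Replacement over $C$. Concretely, I would set
\[
C\defeq\{\cf(\lambda)\mid\lambda\text{ is an oblate cardinal and }\lambda>\max(\nu^+,\tau^+)\}.
\]
The inclusion $C\subseteq[\lambda_\DC,\max(\nu^+,\tau))$ is immediate from Proposition~\ref{prop:svc-upper-bound}: if $\lambda$ is oblate with $\lambda>\max(\nu^+,\tau^+)$, then a witnessing set $X$ (with $\aleph(X)=\lambda$ and $\aleph^*(X)=\lambda^+$) is eccentric, cases~\ref*{prop:svc-upper-bound;case:bounded} and~\ref*{prop:svc-upper-bound;case:suc} fail since $\lambda^+>\max(\nu^+,\tau^+)$ and $\lambda^+\neq\lambda$, so case~\ref*{prop:svc-upper-bound;case:cf} applies and gives $\lambda_\DC\leq\cf(\lambda)<\max(\nu^+,\tau)$. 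In particular $C$ is a set of regular cardinals, hence well-orderable.

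To build $\Omega$ choice-freely, for each $\theta\in C$ let $\lambda_\theta$ be the least oblate cardinal $>\max(\nu^+,\tau^+)$ of cofinality $\theta$ (a definable minimum, over a set nonempty exactly because $\theta\in C$), and let $\rho_\theta$ be the least ordinal such that some $X\in V_{\rho_\theta}$ satisfies $\aleph(X)=\lambda_\theta$ and $\aleph^*(X)=\lambda_\theta^+$. Since $C$ is a set, Replacement gives $\rho^*\defeq\sup_{\theta\in C}\rho_\theta\in\Ord$. Put $\sigma\defeq\cabs{V_{\rho^*+1}}$, so that $\cabs{X}\leq\sigma$ for every $X\in V_{\rho^*}$, and set $\Omega\defeq\parenth{\max(\nu^+,\tau^+,\sigma^+)}^+$. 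Note that for each $\theta\in C$ any witness $X\in V_{\rho_\theta}$ for $\lambda_\theta$ has $\cabs{X}\leq\sigma$, and hence $\lambda_\theta=\aleph(X)\leq\cabs{X}^+\leq\sigma^+<\Omega$; the bound $\aleph(X)\leq\cabs{X}^+$ holds because any injection $\alpha\to X$ in $M$ lies also in $V[G]$, where $X$ has $V[G]$-cardinality the cardinal $\cabs{X}$.

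I would then verify the equivalence for $\lambda\geq\Omega$. For $(\Rightarrow)$: if such a $\lambda$ is oblate, then $\lambda\geq\Omega>\max(\nu^+,\tau^+)$, so $\cf(\lambda)\in C$ by definition of $C$. For $(\Leftarrow)$: if $\cf(\lambda)=\theta\in C$, fix a witness $X\in V_{\rho_\theta}$ for $\lambda_\theta$, so $\aleph(X)=\lambda_\theta$ and $\aleph^*(X)=\lambda_\theta^+>\lambda_\theta$, and apply Lemma~\ref{lem:svc-hartlin-upscale} with $B=X$ and $\mu=\lambda_\theta$. Its hypothesis $\lambda\geq\nu+\tau+\cabs{B}^+$ holds since $\lambda\geq\Omega$ and $\cabs{X}\leq\sigma$, and a strictly increasing $\mu$-sequence cofinal in $\lambda$ exists since $\mu=\lambda_\theta<\Omega\leq\lambda$ and $\cf(\lambda)=\theta=\cf(\mu)$. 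The lemma then produces a set $X'$ with $\aleph(X')=\lambda$ and $\aleph^*(X')=\lambda^+$, so $\lambda$ is oblate.

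The one genuinely delicate point, I expect, is this uniform choice-free bound. The naive route --- ``for each $\theta\in C$ pick a witness $X_\theta$ and take $\sigma=\sup_\theta\cabs{X_\theta}^+$'' --- invokes a choice function on $C$, which need not exist in $M$; minimising instead over the \emph{ranks} $\rho_\theta$ and applying Replacement to the set $C$ sidesteps this, since within the argument one only ever instantiates a single witness at a time. Everything else is bookkeeping: Proposition~\ref{prop:svc-upper-bound} drives the ``only if'' half and Lemma~\ref{lem:svc-hartlin-upscale} the ``if'' half; one does not need Lemma~\ref{lem:svc-hartlin-construction} here, because an oblate cardinal already supplies, through its own witnessing set, an eccentric set of exactly the Hartogs number that Lemma~\ref{lem:svc-hartlin-upscale} requires.
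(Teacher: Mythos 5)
Your proof is correct and follows the same skeleton as the paper's: the upper-bound results (Lemmas~\ref{lem:svc-hartlin-cf}, \ref{lem:svc-dc-bound}, Proposition~\ref{prop:svc-upper-bound}) give the $(\Rightarrow)$ direction, Lemma~\ref{lem:svc-hartlin-upscale} supplies the $(\Leftarrow)$ direction, and $\Omega$ is extracted as a supremum over $C$ using Replacement. Two points of comparison are worth recording.

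First, you define $C$ as the cofinalities of oblate cardinals above $\max(\nu^+,\tau^+)$, whereas the paper takes the cofinalities of \emph{all} oblate cardinals (cut down to $[\lambda_\DC,\max(\nu^+,\tau))$). These two sets coincide: if a small oblate cardinal has cofinality $\mu$, then Lemma~\ref{lem:svc-hartlin-upscale} applied to any witness manufactures arbitrarily large oblate cardinals of the same cofinality, so $\mu$ already appears among the large ones. Your definition is therefore equivalent, but the coincidence is not entirely for free and it would be cleaner to say so explicitly (or just adopt the paper's definition, which makes the $(\Rightarrow)$ direction hold for \emph{every} oblate $\lambda$, not only the large ones, without any further argument).

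Second, your rank-minimisation of $\Omega$ via $\rho_\theta$ and $\sigma=\cabs{V_{\rho^*+1}}$ is sound but heavier than it needs to be. The paper simply sets $\Omega_\mu$ to be the minimal value of the \emph{ordinal} $\nu+\tau+\cabs{B}^+$ over all witnessing $B$, and $\Omega=\sup_{\mu\in C}\Omega_\mu$; this is a minimum of a nonempty class of ordinals followed by a Replacement supremum, and never requires selecting an actual witness $B$. Your concern about the ``naive route'' is legitimate as stated --- $C$ being well-orderable does \emph{not} supply a choice function over a well-ordered family without $\AC_\WO$ --- but neither proof is in danger of needing one, since both minimise a numerical invariant rather than picking a set. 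So your detour through ranks buys nothing the direct cardinal minimisation does not already give, though it is perfectly correct.

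One small remark: your observation that Lemma~\ref{lem:svc-hartlin-construction} is not needed here matches the paper, which likewise calls only on Lemma~\ref{lem:svc-hartlin-upscale} in this proof.
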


\begin{proof}
Firstly, by Lemma~\ref{lem:svc-hartlin-cf} and Lemma~\ref{lem:svc-dc-bound}, if $\lambda$ is an oblate cardinal then $\cf(\lambda)\in[\lambda_{\DC},\max(\nu^+,\tau))$. For each $\mu\in[\lambda_{\DC},\max(\nu^+,\tau))$, if there is an oblate cardinal $\lambda$ with $\cf(\lambda)=\mu$, then let $\Omega_\mu$ be the minimal value of $\nu+\tau+\cabs{B}^+$ among all sets $B$ such that $\cf(\aleph(B))=\mu$ and $\aleph^*(B)=\aleph(B)^+$. By Lemma~\ref{lem:svc-hartlin-upscale}, for each $\lambda>\Omega_\mu$ with $\cf(\lambda)=\mu$, we can construct a set $X$ witnessing that $\lambda$ is an oblate cardinal. Let $C=\{\mu\in[\lambda_{\DC},\max(\nu^+,\tau))\mid(\exists\lambda\in\Card)\cf(\lambda)=\mu$ and $\lambda$ is oblate$\}$. Then, setting $\Omega=\sup\{\Omega_\mu\mid\mu\in C\}$, we see that for all $\lambda\geq\Omega$, we have that $\lambda$ is oblate if and only if $\cf(\lambda)\in C$.
\end{proof}

In fact, we get more than Proposition~\ref{prop:svc-upwards-c}. By Lemma~\ref{lem:svc-hartlin-upscale}, $C$ is precisely the set $\{\cf(\aleph(X))\mid\aleph(X)<\aleph^*(X)\}$.

Finally, putting together Proposition~\ref{prop:svc-comparability-bound}, Proposition~\ref{prop:svc-upper-bound}, and Proposition~\ref{prop:svc-upwards-c}, we obtain the following theorem.

\begin{mainthm}\makeatletter\def\@currentlabel{main theorem}\makeatother\label{thm:main-thm}
Let $M\vDash\SVC$. Then there are cardinals $\phi\leq\psi\leq\chi_0\leq\Omega$, a cardinal $\psi^*\geq\psi$, a cardinal $\chi\in[\chi_0,\chi_0^+]$, and a set $C\subseteq[\phi,\chi_0)$ such that
\begin{equation*}
\Spec_\aleph(M)=\bigcup\left\{\begin{alignedat}{2}
\Suc&=\{\tup{\lambda^+,\lambda^+}&&\mid\lambda\in\Card\}\\
\frakD&\subseteq\{\tup{\lambda,\kappa}&&\mid\psi\leq\lambda\leq\kappa\leq\chi,\psi^*\leq\kappa\}\\
\frakC&\subseteq\{\tup{\lambda,\lambda^+}&&\mid\cf(\lambda)\in C,\lambda<\Omega\}\\
\frakU&=\{\tup{\lambda,\lambda^+}&&\mid\cf(\lambda)\in C,\lambda\geq\Omega\}.
\end{alignedat}\right.
\end{equation*}
\end{mainthm}

In the notation of this section, $\phi=\lambda_{\DC}$, $\psi=\lambda_W$, $\psi^*=\lambda_W^*$, $\chi_0=\max(\nu^+,\tau)$, and $\chi=\max(\nu^+,\tau^+)$.

\begin{rk}
The techniques used in Section~\ref{s:spetra;ss:cohens} to produce cleaner, stricter bounds only rely on the total agreement of cardinalities and cofinalities of ordinals between Cohen's model and the outer model. Any other model of $\SVC$ in which this occurs will have similarly tight control over the spectrum, assuming that one can find an injective seed (which is no small feat).
\end{rk}

\begin{eg}
Suppose that $M\vDash\SVC+\lnot\AC_\WO$ and $M$ agrees with its outer model on the cardinalities and cofinalities of all ordinals. Let $A$ be an injective seed for $M$ and suppose that we have preserved $\DC_{{<}\cabs{A}}$, so $\lambda_{\DC}=\cabs{A}$. Then, in the notation of the \ref*{thm:main-thm}, $\phi=\cabs{A}$, $\chi_0=\chi=\cabs{A}^+$, and so $C=\{\cabs{A}\}$. Since $\phi\leq\psi\leq\chi_0$, we have that $\psi=\cabs{A}$ or $\cabs{A}^+$. If $\psi=\cabs{A}^+$, then $\frakD$ is empty (other than maybe $\tup{\cabs{A}^+,\cabs{A}^+}$, which is included in $\Suc$), and if $\psi=\cabs{A}$ then $\frakD\subseteq\{\tup{\cabs{A},\cabs{A}},\tup{\cabs{A},\cabs{A}^+}\}$. However, given that $\cabs{A}\in C$, we may as well exclude $\tup{\cabs{A},\cabs{A}^+}$ from $\frakD$, as it would appear in $\frakC$ anyway.

Finally, we have the oblate cardinals. Let $\Omega$ be the least value of $\cabs{B}$ where $\cf(\aleph(B))=\cabs{B}$, and $\aleph^*(B)>\aleph(B)$ with the restriction that $\Omega\geq\chi_0=\cabs{A}^+$. Then by Lemma~\ref{lem:svc-hartlin-upscale} (using that $\tau=\aleph_0$), we have that $\Omega\leq\aleph(X)<\aleph^*(X)$ if and only if $\cf(\aleph(X))=\cabs{A}$. Putting this together,
\begin{equation*}
\Spec_\aleph(M)=\bigcup\left\{\begin{aligned}
\Suc&=\{\tup{\lambda^+,\lambda^+}\mid\lambda\in\Card\}\\
\frakD&\subseteq\{\tup{\cabs{A},\cabs{A}}\}\\
\frakC&\subseteq\{\tup{\lambda,\lambda^+}\mid\cf(\lambda)=\cabs{A},\lambda<\Omega\}\\
\frakU&=\{\tup{\lambda,\lambda^+}\mid\cf(\lambda)=\cabs{A},\lambda\geq\Omega\}.
\end{aligned}\right.
\end{equation*}
By $\DC_{{<}\cabs{A}}$, $\aleph(\cabs{A})\geq\cabs{A}$, and certainly $\aleph(A)\leq\cabs{A}^+$. However, if $\aleph(A)=\cabs{A}$ and $\aleph^*(A)=\cabs{A}^+$, then we have $\Omega=\cabs{A}^+$, and $\frakC=\{\tup{\cabs{A},\cabs{A}^+}\}$. On the other hand, if $\aleph(A)=\aleph^*(A)=\cabs{A}$ is a limit cardinal, then we have $\Omega\leq\cabs{A^{{<}\cabs{A}}}^+$, so there may be oblate cardinals missing.

A more specific example is Cohen's model which, as we saw, follows this pattern precisely with $\Omega=\cabs{A}^+=\aleph_0$ and $\frakD=\emptyset$. In Question~\ref{qn:d-not-empty} we ask if it is possible to have $\frakD\neq\emptyset$ in this situation.
\end{eg}

\section{The future}

Chief among unanswered questions in this field is as follows:
\begin{qn}
Precisely which spectra are possible to achieve with models of $\SVC$?
\end{qn}
Throughout the work in Section~\ref{s:spectra;ss:svc}, the spectre of the injective seed haunted our calculations. If we wished to decide precisely which spectra are achievable in models of $\SVC$, then more information on the injective seeds of the models that we produce is required. It is also unclear to me how much control we can have over $C$, $\frakC$ and $\Omega$. In \cite{karagila_hartogs_2023_draft}, for each infinite $\lambda\leq\kappa$, the authors construct a symmetric system $\tup{\bbP,\sG,\sF}$ such that for all $V\vDash\ZFC$, $\1_\bbP\forces^{\HS}(\exists X)\aleph(X)=\check{\lambda},\aleph^*(X)=\check{\kappa}$. In fact, such models can be constructed so that the model and the outer model agree with the cardinality and cofinality of all ordinals, just as in the case of Cohen's model.

\begin{qn}
What are the spectra of the symmetric extensions produced in \cite{karagila_hartogs_2023_draft}?
\end{qn}

Assume $\SVC$ and suppose that $\aleph(X)=\lambda$ and $\aleph^*(X)=\kappa$, with $\lambda^+<\kappa$, and suppose that $\lambda_{\DC}=\lambda$. Then for all $\mu\in[\lambda,\kappa)$, we have that $\aleph(X+\mu)=\mu^+$ and $\aleph^*(X+\mu)=\kappa$, so $\cf(\mu^+)\in C$ for all $\mu\in[\lambda,\kappa)$ and, as we have seen, $\cf(\lambda)\in C$ as well. Therefore, given any $\mu\in[\lambda,\kappa)$, if $\cf(\mu)$ is a successor, then we can deduce that $\cf(\mu)\in C$ if and only if $\cf(\mu)\geq\lambda$. However, this does not complete the picture. If there is a weakly inaccessible cardinal\footnote{An uncountable regular limit cardinal.} $\mu\in[\lambda,\kappa)$, then we have not been able to deduce if $\mu\in C$ using these tools.

\begin{qn}
Let $\mu$ be weakly inaccessible and suppose that for some set $X$, $\aleph(X)<\mu<\aleph^*(X)$. Must there exist $Y$ such that $\aleph(Y)=\mu$?
\end{qn}

\begin{qn}
Suppose that $M\vDash\ZF+\SVC+\AC_\WO$. Can we always force $\AC$ in a way that collapses no cardinals?
\end{qn}

\begin{qn}\label{qn:d-not-empty}
Is there a model $M$ of $\SVC+\lnot\AC_\WO$ such that
\begin{equation*}
\Spec_\aleph(M)=\Suc\cup\{\tup{\lambda,\lambda^+}\mid\cf(\lambda)=\aleph_0\}\cup\{\tup{\aleph_0,\aleph_0}\},
\end{equation*}
as described in the example after the \ref*{thm:main-thm}? If we replace $\aleph_0$ by an arbitrary infinite cardinal $\kappa$, for which $\kappa$ is this possible?
\end{qn}

The Bristol model, introduced in \cite{karagila_bristol_2018} and expanded upon in \cite{karagila_approaching_2020}, is an inner model of $L[c]$, where $c$ is a single Cohen real. It therefore satisfies the same conditions that made manipulating Cohen's model so nice: There is an outer model of $\ZFC$ that agrees with the inner model on all cardinalities and cofinalities of ordinals. However, the Bristol model was constructed with explicit intention to violate $\SVC$, and so many of the techniques used in this paper cannot be applied `as is'. However, it does not seem too far-fetched that an amount of this work can be reclaimed. The Bristol model is, in a sense, a limit of models of $\SVC$ that approaches the final model, and it is reasonable to believe that one can look at intermediate models to obtain results about the spectrum.

\begin{qn}[{\cite[Question~10.19]{karagila_approaching_2020}}]
Does the Bristol model satisfy $\AC_\WO$? If not, what is the spectrum of the Bristol model?
\end{qn}

\section{Acknowledgements}

The author would like to thank the reviewer for their feedback on this article. The author would also like to thank Asaf Karagila for his feedback on an early version of the paper and for encouraging the author to write out their results to share with the mathematical community. It can be difficult to realise when one's work is in a state that is ready to be shared, and without that push the author may have sat on this research until their viva.

\providecommand{\bysame}{\leavevmode\hbox to3em{\hrulefill}\thinspace}
\providecommand{\MR}{\relax\ifhmode\unskip\space\fi MR }
\providecommand{\MRhref}[2]{%
  \href{http://www.ams.org/mathscinet-getitem?mr=#1}{#2}
}
\providecommand{\href}[2]{#2}

\end{document}